\newcommand{\A}{\mc A}
\newcommand{\ALG}{\mathrm{ALG}}
\newcommand{\amp}{\mathrel \&}
\newcommand{\ap}{\mathrm{ap}}
\newcommand{\B}{\mc B}
\newcommand{\C}{\mc C}
\newcommand{\car}{\curvearrowright}
\newcommand{\cb}{\mathrm{cb}}
\newcommand{\concat}{\string^}
\newcommand{\diff}{\mathrm d}
\newcommand{\E}{\mathrel E}
\newcommand{\EINV}{\mathrm{EINV}}
\newcommand{\F}{\mr F}
\newcommand{\G}{\mc G}
\newcommand{\hra}{\hookrightarrow}
\newcommand{\INV}{\mathrm{INV}}
\newcommand{\K}{\mc K}
\newcommand{\mb}{\mathbb}
\newcommand{\mc}{\mathcal}
\newcommand{\mr}{\mathrel}
\newcommand{\N}{\mb N}
\newcommand{\NULL}{\mathrm{NULL}}
\newcommand{\p}{\mathbf p}
\newcommand{\R}{\mb R}
\newcommand{\thra}{\twoheadrightarrow}
\newcommand{\tl}{\mr\triangleleft}
\newcommand{\uhr}{\upharpoonright}
\newcommand{\wt}{\widetilde}
\newcommand{\Z}{\mb Z}
\DeclareMathOperator{\Aut}{Aut}
\DeclareMathOperator{\cod}{cod}
\DeclareMathOperator{\dom}{dom}
\DeclareMathOperator{\End}{End}
\DeclareMathOperator{\Fin}{Fin}
\DeclareMathOperator{\id}{id}
\DeclareMathOperator{\im}{im}
\DeclareMathOperator{\Inn}{Inn}
\DeclareMathOperator{\len}{len}
\DeclareMathOperator{\Out}{Out}
\DeclareMathOperator{\SL}{SL}
\DeclareMathOperator{\Stab}{Stab}
\DeclareMathOperator{\Sym}{Sym}
\newtheorem{thm}{Theorem}[section]
\newtheorem{cor}[thm]{Corollary}
\newtheorem{lem}[thm]{Lemma}
\newtheorem{prop}[thm]{Proposition}
\theoremstyle{definition}
\newtheorem{defn}[thm]{Definition}
\newtheorem{eg}[thm]{Example}
\newtheorem{prob}[thm]{Problem}
\newtheorem{rmk}[thm]{Remark}
\newcommand{\blocktheorem}[1]{%
  \csletcs{old#1}{#1}
  \csletcs{endold#1}{end#1}
  \RenewDocumentEnvironment{#1}{o}
    {\par\addvspace{1.5ex}
     \noindent\begin{minipage}{\textwidth}
     \IfNoValueTF{##1}
       {\csuse{old#1}}
       {\csuse{old#1}[##1]}}
    {\csuse{endold#1}
     \end{minipage}
     \par\addvspace{1.5ex}}
}
\title{Lifts of Borel actions on quotient spaces}
\author{Joshua R. Frisch, Alexander S. Kechris and Forte Shinko}
\date{}
\begin{document}

\maketitle

\centerline{Dedicated to Benjamin Weiss on his 80th birthday}

\begin{abstract}
    Given a countable Borel equivalence relation $E$
    and a countable group $G$,
    we study the problem of when a Borel action of $G$ on $X/E$
    can be lifted to a Borel action of $G$ on $X$.
\end{abstract}

\tableofcontents

\section{Introduction}

\subsection{Automorphisms of equivalence relations}

A \textbf{countable Borel equivalence relation (CBER)}
is an equivalence relation $E$ on a standard Borel space $X$
such that $E$ is Borel when considered as a subset of $X^2$.
Let $\pi_E:X\to X/E$ denote the quotient map.

Let $E$ be a CBER on $X$.
The \textbf{automorphism group} of $E$,
denoted $\Aut_B(E)$
(or $N_B[E]$),
is the group of Borel automorphisms of $E$,
that is,
Borel automorphisms $T:X\to X$ such that
$x\E y \iff T(x)\E T(y)$,
under composition.
The \textbf{inner automorphism group} of $E$
(or the \textbf{full group} of $E$),
denoted $\Inn_B(E)$ (or $[E]_B$),
is the normal subgroup of $\Aut_B(E)$ consisting of the $T\in\Aut_B(E)$
such that $x\E T(x)$.
The normalizer of $\Inn_B(E)$ in the group of Borel automorphisms of $X$ is $\Aut_B(E)$.
By a result of Miller and Rosendal \cite[Proposition 2.1]{MR07},
if $E$ is aperiodic,
then the natural map $\Aut_B(E)\to \Aut(\Inn_B(E))$
is an isomorphism.
The \textbf{outer automorphism group} of $E$,
denoted $\Out_B(E)$,
is the quotient group $\Aut_B(E)/\Inn_B(E)$.

Let $E$ and $F$ be CBERs on $X$ and $Y$ respectively.
A function $f:X/E\to Y/F$ is \textbf{Borel}
if the set $\{(x,y)\in X\times Y: f([x]_E) = [y]_F\}$ is Borel,
or equivalently by the Lusin-Novikov theorem
\cite[Theorem 18.10]{Kec95},
if there exists a Borel map $T:X\to Y$ such that $f([x]_E) = [T(x)]_F$.
The \textbf{Borel symmetric group} of $X/E$,
denoted $\Sym_B(X/E)$,
is the set of Borel permutations of $X/E$ under composition.
There is a natural map $\Aut_B(E)\to\Sym_B(X/E)$,
defined by sending $T\in\Aut_B(E)$
to the permutation $[x]_E\mapsto [T(x)]_E$.
This morphism has kernel $\Inn_B(E)$,
so there is a factorization
\[
    \begin{tikzcd}
        \Aut_B(E) \rar[two heads, "p_E"] &
        \Out_B(E) \rar[hook, "i_E"] &
        \Sym_B(X/E)
    \end{tikzcd}.
\]
A Borel permutation of $X/E$ in the image of this morphism
is called an \textbf{outer permutation}.
In other words, $f \in \Sym_B(X/E)$ is outer if
there is $T \in \Aut_B(E)$ such that $f([x]_E) = [T(x)]_E$.

\subsection{Lifts of Borel actions on quotient spaces}

Let $E$ be a CBER on $X$ and let $G$ be a countable group.
We write $G\car_B (X,E)$
to denote an action of $G$ on $X$ by Borel automorphisms of $E$,
which is equivalent to a morphism $G\to\Aut_B(E)$.
An action $G\car_B (X,E)$ is \textbf{class-bijective}
if $\pi_E$ is class-bijective,
that is,
the restriction of $\pi_E$ to every $G$-orbit is an injection,
i.e.,
$g \cdot x \E x \implies g \cdot x = x$.
A \textbf{Borel action} of $G$ on $X/E$,
denoted $G\car_B X/E$,
is an action of $G$ on $X/E$ by Borel permutations,
which is equivalent to a morphism $G\to\Sym_B(X/E)$.
An action $G\car_B X/E$ is \textbf{outer}
if $G$ acts by outer permutations,
or equivalently,
if the morphism $G\to\Sym_B(X/E)$ factors through $i_E$.
Every action $G\car_B (X,E)$ induces
an action $G\car_B X/E$ by composing with $i_E\circ p_E$,
and $\pi_E$ is $G$-equivariant with respect to these actions.
We initiate in this paper the study of the reverse problem:
when does a Borel action $G\car_B X/E$
have a \textbf{lift} to an action $G\car_B (X,E)$?
In other words,
we are interested in the lifting problem
\[
    \begin{tikzcd}
        & \Aut_B(E) \dar[two heads, "p_E"] \\
        & \Out_B(E) \dar[hook, "i_E"] \\
        G \rar \ar[uur, dashed] \urar[dashed] & \Sym_B(X/E)
    \end{tikzcd}
\]
which we will break up into steps by going through $\Out_B(E)$.

\subsection{Main results}

We give in \Cref{sectionBorelActions}
examples of CBERs $E$ that show that even the first step of the lifting problem
 \[
    \begin{tikzcd}
        & \Out_B(E) \dar[hook, "i_E"] \\
        G \rar  \urar[dashed] & \Sym_B(X/E)
    \end{tikzcd}
\]
does not always have a positive solution,
i.e.,
that there are Borel actions $G \car_B X/E$ which are not outer.
In all these examples,
$E$ admits an invariant Borel probability measure
(i.e,
it is generated by a Borel action of a countable group
that has an invariant Borel probability measure).
On the other hand,
we show in \Cref{compressibleCb} that
the full lifting problem has a positive solution,
in a strong sense,
when the CBER $E$ admits no such invariant measure
or equivalently
(by Nadkarni's Theorem)
that it is compressible
(i.e.,
there is a Borel injection that sends every equivalence class
to a proper subset of itself).
\begin{thm}\label{mainCompressible}
    Let $E$ be a compressible CBER.
    Then every Borel action $G\car_B X/E$ has a class-bijective lift $G\car_B (X,E)$.
\end{thm}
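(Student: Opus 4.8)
The strategy is to use compressibility to endow each $E$-class with unlimited spare room, and then to spend that room absorbing all the incoherence produced by Lusin--Novikov selections.

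\emph{Adding room.} First note that a compressible CBER is automatically aperiodic: a Borel injection cannot map a finite class onto a proper subset of itself, so every $E$-class is infinite. Next, by the structure theory of compressible CBERs (which gives $E\cong E\times I_\N$, where $I_\N$ denotes the indiscrete equivalence relation on $\N$), such an isomorphism descends to a Borel bijection of $X/E$ with the quotient of $E\times I_\N$; after transporting $\beta$ along it we may assume from the outset that $X=Z\times\N$ and $E=F\times I_\N$ for some CBER $F$ on a standard Borel space $Z$, with $\pi_E(z,n)=[z]_F$. Thus every $E$-class has the form $C\times\N$, and ``fresh'' $\N$-coordinates are always at our disposal.

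\emph{Pseudo-lifts.} For each $g\in G$ the set $\{(z,z')\in Z^2:\beta(g)[z]_F=[z']_F\}$ is Borel with all vertical sections nonempty and countable, so by Lusin--Novikov uniformization there is a Borel $t_g\colon Z\to Z$ with $\pi_F\circ t_g=\beta(g)\circ\pi_F$; take $t_e=\id$. The family $(t_g)$ descends to $\beta$ on $X/E$ but is far from being an action: $t_{g'}\circ t_g$ and $t_{g'g}$ lift the same permutation of $X/E$, hence carry each $F$-class into the same $F$-class, but need not agree there, and the $t_g$ need not be injective. A preliminary observation --- the first place compressibility is used --- is that every Borel permutation of $X/E$ is an outer permutation; granting this, one upgrades the $t_g$ to elements $S_g\in\Aut_B(E)$ over $\beta(g)$, so that $g\mapsto[S_g]$ is a genuine homomorphism $G\to\Out_B(E)$, and the remaining task is to lift it along $p_E\colon\Aut_B(E)\thra\Out_B(E)$. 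Equivalently, one must split the pulled-back group extension $1\to\Inn_B(E)\to\mathcal E\to G\to1$, where $\mathcal E=\{(g,T)\in G\times\Aut_B(E):[T]=[S_g]\}$.

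\emph{Splitting and class-bijectivity.} This is the heart of the matter, and both tasks are handled together using the spare $\N$-coordinates. Observe that $q\mapsto G_q:=\{g\in G:\beta(g)q=q\}$ is Borel, in the sense that $\{(q,g):g\in G_q\}$ is a Borel subset of $(X/E)\times G$; fixing a well-order of $G$ yields Borel canonical coset representatives $r(g,q):=\min(gG_q)$, depending only on $gG_q$. The lifted action $G\car_B(Z\times\N,\,F\times I_\N)$ is then built so that its first coordinate is governed by the $t_{r(g,q)}$ (so that elements of $G_q$ fix the first coordinate over $q$), while the $\N$-coordinate is re-indexed, Borel-uniformly, to record just enough of the ``path travelled'' to repair the discrepancies $t_{g'}t_g\ne t_{g'g}$ --- the repaired data being shifted into hitherto-unused $\N$-slots, which exist precisely because $E$ is compressible. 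The re-indexing is moreover arranged so that $G_q$ acts trivially on the entire fibre over $q$; this makes the action class-bijective, since $g\cdot w\mathrel E w$ forces $\beta(g)$ to fix $\pi_E(w)$, hence $g\in G_{\pi_E(w)}$, hence $g\cdot w=w$. Finally the resulting class-bijective action on $(Z\times\N,F\times I_\N)$ is transported back to $(X,E)$ along the isomorphism of the first step.

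\emph{The main obstacle.} The first two steps, and the verifications that the final action induces $\beta$ and is class-bijective, are routine. The real difficulty is the third step: constructing the Borel re-indexing of the $\N$-coordinate that simultaneously straightens $(t_g)$ into an honest homomorphism into $\Aut_B(E)$ and trivialises the action of every stabiliser $G_q$ on its fibre. What makes this delicate is that neither $E$ nor the orbit equivalence relation of $\beta$ on $X/E$ admits a Borel transversal, so one can neither choose basepoints in the $\beta$-orbits nor coherently enumerate the $E$-classes; compressibility must be invoked only through the \emph{local} availability of fresh coordinates, and the bookkeeping that makes the amalgamation globally Borel is the crux of the proof.
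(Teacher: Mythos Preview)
Your proposal is an outline, not a proof. You yourself identify the third step as ``the crux of the proof'' and then do not carry it out: the phrase ``re-indexed, Borel-uniformly, to record just enough of the `path travelled' to repair the discrepancies'' is a description of what a successful construction would have to achieve, not a construction. Concretely, you are asking for a Borel section of the extension $1\to\Inn_B(E)\to\mathcal E\to G\to 1$, and nothing you wrote explains why the $\Inn_B(E)$-valued $2$-cocycle $(g,h)\mapsto S_gS_hS_{gh}^{-1}$ is a coboundary; ``spare $\N$-coordinates'' gives you room inside each $E$-class, but the obstruction to splitting lives across different $E$-classes in the same $E^{\vee G}$-class, where no basepoint is available. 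Your ``preliminary observation'' that every Borel permutation of $X/E$ is outer is likewise asserted rather than proved, and is already a substantial fraction of the theorem (it is the case of a single generator).

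The paper's argument avoids the group-extension viewpoint entirely. Via \Cref{linkLift} it reduces the statement to a fact about the pair $E\subseteq E^{\vee G}$ of compressible CBERs that no longer mentions $G$: the existence of an $(E,E^{\vee G})$-link, i.e., a Borel subequivalence relation $L\subseteq E^{\vee G}$ meeting every $E$-class in every $E^{\vee G}$-class exactly once. This is then established (\Cref{compressibleLink}) by starting from a \emph{choice sequence}---a family $(f_i)_{i<N}$ of Borel maps $X\to X$ such that $i\mapsto[f_i(x)]_E$ enumerates $[x]_{E^{\vee G}}/E$---and using the amplification isomorphism $(E,E^{\vee G})\cong_B(E\times I_N,E^{\vee G}\times I_N)$ repeatedly to upgrade the $f_i$ first to injections, then to maps with $E$-compressible complete-section image, and finally to bijections; once each $f_i$ is a bijection, the link is simply $(x,i)\mathrel L(y,j)\iff f_i^{-1}(x)=f_j^{-1}(y)$. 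The point is that the bookkeeping you flag as delicate is absorbed into a finite chain of concrete amplification steps, each of which is a one-line computation, rather than into an ad hoc re-indexing that must be checked to satisfy the group law.
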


This theorem follows from a result
(see \Cref{compressibleLink})
about links
(see \Cref{defnLink})
of pairs $E\subseteq F$ of compressible CBERs
that was also proved
(by a different method)
independently by Ben Miller.
Our proof uses some ideas coming from \cite{FSZ89}.

We do not know if there are non-compressible $E$ that satisfy \Cref{mainCompressible}.
Using this result and a variant of \cite[Corollary 13.3]{KM04},
we show,
in \Cref{genericCb},
that the full lifting problem has a positive solution
generically for an arbitrary \textbf{aperiodic}
(i.e., having all its classes infinite)
CBER $E$.

Below if $G\car_B X/E$,
we let $E^{\vee G}\supseteq E$ be the CBER defined as follows:
\[
    x\E^{\vee G} y
    \iff \exists g\in G (g\cdot [x]_E = [y]_E).
\]
\begin{cor}\label{mainGeneric}
    Let $E$ be an aperiodic CBER on a Polish space $X$.
    Then for any Borel action $G \car_B X/E$,
    there is a comeager $E^{\vee G}$-invariant
    Borel subset $Y\subseteq X$ such that
    $G\car_B Y/E$ has a class-bijective lift.
\end{cor}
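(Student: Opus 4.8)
The plan is to reduce to \Cref{mainCompressible} by passing to a comeager invariant set on which $E$ is compressible.

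To begin, note that $E^{\vee G}$ is a CBER: each of its classes is the countable union $[x]_{E^{\vee G}}=\bigcup_{g\in G}g\cdot[x]_E$ of $E$-classes, hence countable, and $E^{\vee G}$ is Borel because for each $g\in G$ the set $\{(x,y):g\cdot[x]_E=[y]_E\}$ is Borel by definition of a Borel action on the quotient. Since $E\subseteq E^{\vee G}$ and $E$ is aperiodic, $E^{\vee G}$ is also aperiodic. Directly from the definitions, a Borel set $Y\subseteq X$ is $E^{\vee G}$-invariant if and only if $Y$ is $E$-invariant and $Y/E$ is a $G$-invariant subset of $X/E$; for such a $Y$, the action $G\car_B X/E$ restricts to a Borel action $G\car_B Y/E$. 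It therefore suffices to find a comeager $E^{\vee G}$-invariant Borel set $Y\subseteq X$ such that $E\uhr Y$ is compressible: \Cref{mainCompressible}, applied to the compressible CBER $E\uhr Y$ and the restricted action $G\car_B Y/E$, then produces a class-bijective lift $G\car_B(Y,E\uhr Y)$, which is exactly what the corollary asks for.

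The crux is thus the following variant of \cite[Corollary 13.3]{KM04}: \emph{if $E\subseteq F$ are CBERs on a Polish space $X$ and $E$ is aperiodic, then there is a comeager $F$-invariant Borel set $Y\subseteq X$ on which $E$ is compressible.} Applying this with $F:=E^{\vee G}$ finishes the proof. I would obtain it by reproving \cite[Corollary 13.3]{KM04} while carrying the coarser relation $F$ along. Recall the mechanism of generic compressibility: an aperiodic CBER contains an aperiodic hyperfinite subequivalence relation (by the marker lemma); an aperiodic hyperfinite CBER is compressible after deleting a meager invariant Borel set, since on a comeager invariant set one can destroy every invariant probability measure (using the Baire property of invariant sets); and if $E'\subseteq E$ with $E'$ compressible then $E$ is compressible, because a Borel injection witnessing compressibility of $E'$ also witnesses compressibility of $E$. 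Performing these constructions inside $X/F$ --- choosing the complete sections, markers, and hyperfinite subrelation so as to respect $F$ --- yields a comeager set that is $F$-invariant. It is moreover harmless to shrink $Y$ further within any $F$-invariant Borel subset, since the restriction of a compressible CBER to an invariant Borel set is again compressible: a Borel injection $f$ with $f([x]_E)\subsetneq[x]_E$ for all $x$ restricts, on any $E$-invariant Borel subset of its domain, to a Borel injection with the same property.

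The one genuinely delicate point, which I expect to be the main obstacle, is securing $F$-invariance of the comeager set. One cannot simply take a comeager $E$-invariant $X_0$ with $E\uhr X_0$ compressible and pass to its largest $E^{\vee G}$-invariant Borel subset, because the $E^{\vee G}$-saturation of a meager set need not be meager (saturations of meager sets under a CBER can even be comeager). So the coarser relation $F$ must be threaded through the entire argument rather than imposed at the end.
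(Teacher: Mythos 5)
Your top-level reduction is exactly the paper's: the corollary is obtained by applying \Cref{compressibleCb} (i.e.\ \Cref{mainCompressible}) to the restriction of the action to a comeager $E^{\vee G}$-invariant Borel set on which $E$ is compressible, and the preliminary observations (that $E^{\vee G}$ is an aperiodic CBER, that $E^{\vee G}$-invariance of $Y$ lets the action restrict to $Y/E$) are fine. The lemma you isolate --- for aperiodic $E\subseteq F$ there is a comeager $F$-invariant, $E$-compressible Borel set --- is precisely the paper's \Cref{comeagerCompressible}, so the skeleton of your argument matches the paper's.

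The gap is that this lemma is the entire content of the corollary beyond \Cref{mainCompressible}, and you do not prove it: you correctly identify the decisive difficulty (the comeager set produced by ordinary generic compressibility is only $E$-invariant, and one cannot pass to its largest $F$-invariant subset since $F$-saturations of meager sets need not be meager), but your proposed remedy --- ``perform the marker/hyperfinite constructions so as to respect $F$'' --- is exactly the step that is left unexplained, and it is not clear how your route (aperiodic hyperfinite subrelation plus generically destroying invariant measures) would ever output an $F$-invariant set, since nothing in that construction sees $F$. The paper's mechanism is different and is worth knowing: one builds a family of fsr's $E_s$ of $E$ indexed by finite strings $s\in\N^{<\N}$ (using a Borel coloring of the intersection graph and a decomposition $X=\bigsqcup_n A_n$ into sets meeting every $E$-class infinitely often), so that for each parameter $\alpha\in\N^\N$ one gets a smooth subrelation $E_\alpha=\bigcup_n E_{\alpha\uhr n}$ with transversal $A_0$; for each $a\in A_0$ the set of $\alpha$ with $[a]_{E_\alpha}$ infinite is comeager, and the Kuratowski--Ulam theorem then yields a \emph{single} $\alpha$ for which the set $C=\{x:\forall a\in A_0\cap[x]_F\ ([a]_{E_\alpha}\text{ is infinite})\}$ is comeager. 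The point is that $C$ is $F$-invariant by construction (the defining condition quantifies over the whole $F$-class), and $E\uhr C$ is compressible because $\dom(E_\alpha)\cap C$ is an $(E\uhr C)$-compressible complete section. Some parametrize-then-select device of this kind (or another concrete way of making the good set $F$-invariantly definable) is the missing idea; without it your proof of the key lemma does not go through.
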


In Sections \ref{sectionOuter}-\ref{sectionAmenable},
we study the lifting problem for outer actions.
A lift of an outer action is a solution to the following lifting problem:
\[
    \begin{tikzcd}
        & \Aut_B(E) \dar[two heads, "p_E"] \\
        G \rar \urar[dashed] & \Out_B(E)
    \end{tikzcd}
\]

Below we use the following terminology.
If a group $G$ acts on a set $X$,
we denote by $E_G^X$ the induced equivalence relation
whose classes are the $G$-orbits.
An action of group $G$ on a set $X$ is \textbf{free}
if for any $g \neq 1$ and $x\in X$,
$g\cdot x \neq x$.
If the set $X$ carries a measure and the action is measure-preserving,
we only require that this holds for almost all $x$.
A Borel action of a countable group $G$
on a standard Borel space $X$ is \textbf{pmp}
if it has an invariant Borel probability measure.
A countable group $G$ is \textbf{treeable}
if it admits a free, pmp Borel action on a standard Borel space $X$
such that the induced CBER $E_G^X$ is treeable,
i.e.,
its classes are the connected components of an acyclic Borel graph on $X$.
For example,
all amenable and free groups are treeable but all property (T) groups
and all products of an infinite group with a non-amenable group are not treeable.

We now have the following results
(see \Cref{amenableCb}, \Cref{lfCb} for \ref{mainCb},
and \Cref{finiteAmalgamLift}, \Cref{amenableLift} for \ref{mainLift}).
Below a CBER is \textbf{smooth} if
it admits a Borel set meeting every class in exactly one point.

\begin{thm}\label{mainLifts}
    \leavevmode
    \begin{enumerate}[label=(\arabic*)]
        \item \label{mainCb}
            Every outer action of any abelian group,
            and in fact any group for which
            the conjugacy equivalence relation
            on its space of subgroups is smooth,
            and any locally finite group has a class-bijective lift.
        \item \label{mainLift}
            Every outer action of any amenable group
            and any amalgamated free product of finite groups has a lift.
    \end{enumerate}
\end{thm}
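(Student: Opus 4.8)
The plan is to view a lift of an outer action $\alpha\colon G\to\Out_B(E)$ as a splitting of the extension
\[
    1\to\Inn_B(E)\to\Aut_B(E)\times_{\Out_B(E)}G\to G\to 1,
\]
equivalently, having chosen (via Lusin--Novikov) Borel representatives $T_g\in\Aut_B(E)$ of $\alpha(g)$, as a Borel family $b\colon G\to\Inn_B(E)$ correcting $g\mapsto T_g$ into a homomorphism, i.e.\ killing the $\Inn_B(E)$-valued $2$-cocycle $c(g,h)=T_gT_hT_{gh}^{-1}$. Requiring the lift to be class-bijective imposes a compatibility of $b$ with stabilizers, controlled by the link of the pair $E\subseteq E^{\vee G}$ in the sense of \Cref{defnLink}. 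Over the part of $X$ where $E$ has finite classes the relation is smooth and a Borel transversal produces a class-bijective lift at once, so one may assume throughout that $E$ is aperiodic; the guiding point is that for aperiodic $E$ the group $\Inn_B(E)$ is flexible enough to absorb such cocycles, provided $G$ is of one of the allowed forms. I expect the two hardest ingredients to be a rigidity lemma for finite groups --- any two lifts of a finite group's outer action are conjugate by an element of $\Inn_B(E)$ --- and a Følner-averaging argument for amenable groups.

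For \ref{mainCb} (class-bijective lifts), I would start with a finite group $H$ and, by a Borel analysis of the finite-index inclusion $E\subseteq E^{\vee H}$ (using the finiteness of $H$ to upgrade a Borel choice of representatives into an $H$-equivariant one), establish both (i) that every outer action of $H$ has a class-bijective lift, and (ii) the rigidity lemma above. A locally finite group $G=\bigcup_nG_n$ is then handled by a direct limit: lift $\alpha\uhr G_1$ by (i), and inductively extend from $G_n$ to $G_{n+1}$ by lifting $\alpha\uhr G_{n+1}$ arbitrarily and then conjugating, via (ii), into agreement with the lift already fixed on $G_n$; as conjugation by an element of $\Inn_B(E)$ preserves class-bijectivity, the union of these coherent lifts is the desired one, yielding \Cref{lfCb}. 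For abelian $G$, and more generally whenever the conjugacy action of $G$ on $\mathrm{Sub}(G)$ is smooth, the assignment $[x]_E\mapsto\Stab_G([x]_E)$ descends to a Borel $E^{\vee G}$-invariant map into the quotient of $\mathrm{Sub}(G)$ by conjugacy, so smoothness lets one cut $X$ into countably many Borel pieces on each of which the stabilizers form a single conjugacy class; cutting further by the exact stabilizer $H$ and using that $G$ permutes these sub-pieces transitively, one reduces the problem over such a piece to lifting a \emph{free} outer action of the subquotient $N_G(H)/H$ with $H$ acting trivially. Since every lift of a free outer action is automatically class-bijective, this reduces to producing \emph{some} lift of a free outer action of an amenable group, i.e.\ to \ref{mainLift}; this gives \Cref{amenableCb}.

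For \ref{mainLift} (lifts), the amenable case is where I expect the substance. After reducing to producing any Borel $b$ trivializing $c$, I would construct $b$ as a limit of approximate corrections indexed along a Borel Følner exhaustion of $E^{\vee G}$ in the $G$-direction, controlling the defect of each approximation by the asymptotic invariance of the Følner sets --- the Borel counterpart of the vanishing of group cohomology for amenable groups (cf.\ \cite{FSZ89}). For $G=\mathbb Z$ this degenerates to taking powers of a single lift, with the general case an inductive Rokhlin-tower argument; this yields \Cref{amenableLift}. For an amalgamated free product $G=A*_CB$ of finite groups, lift $\alpha\uhr A$ and $\alpha\uhr B$ class-bijectively by \Cref{lfCb}; the two resulting lifts of $\alpha\uhr C$ are conjugate by some $S\in\Inn_B(E)$ by the rigidity lemma (ii), and conjugating the $B$-lift by $S$ makes it agree with the $A$-lift on $C$ while still lifting $\alpha\uhr B$, so the universal property of the amalgam glues them into a homomorphism $G\to\Aut_B(E)$, which lifts $\alpha$ since it does so on the generating set $A\cup B$; this is \Cref{finiteAmalgamLift}. (The glued lift need not be class-bijective, which is why only a lift is asserted for amalgams.) Thus the principal obstacles are exactly the rigidity lemma (ii), to be proved by the Borel fundamental-domain argument for finite groups, and the Borel-measurable implementation of the Følner averaging.
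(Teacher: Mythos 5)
You have put your finger on the right two pressure points, but both are genuine gaps rather than ingredients you can take for granted. First, the ``rigidity lemma'' is false as you state it: take $\alpha\colon \Z/2\Z\to\Out_B(E)$ to be the trivial morphism; then the identity and any nontrivial involution in $\Inn_B(E)$ are both lifts of $\alpha$, and they are certainly not conjugate by an element of $\Inn_B(E)$. Your locally finite induction and your amalgam gluing really use the version for \emph{class-bijective} lifts of finite groups, and that uniqueness-up-to-inner-conjugacy statement is exactly what the paper does \emph{not} prove and is structured to avoid: instead of conjugating an arbitrarily chosen new lift into agreement with the old one, the paper proves an \emph{extension} theorem, \Cref{finiteLinkExt} (every $(E,F)$-link extends to an $(E,F')$-link when $E\tl F'$ has finite index), whose proof needs $\Phi$-maximal fsr's plus an equidecomposability argument with the cancellation law in the cardinal algebra $\K(F'\times I_\N)$; this yields \Cref{finiteHomCb} (the class-bijective lifting property for morphisms of finite groups), which is what actually powers \Cref{lfCb} and \Cref{finiteAmalgamLift}. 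If rigidity for class-bijective lifts were available, \Cref{finiteHomCb} would be immediate from \Cref{finiteCb}, so you should expect it to be at least as hard as the paper's cardinal-algebra argument; as written, your treatment of locally finite groups and of amalgamated products of finite groups rests on an unproved (and, in its stated form, false) lemma.

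Second, for amenable groups, ``Følner averaging kills the $\Inn_B(E)$-valued $2$-cocycle'' is a heuristic, not an argument: $\Inn_B(E)$ is a huge non-abelian group with no linear structure in which to average, and there is no Borel cohomology-vanishing theorem to quote; even in the ergodic pmp case the result (\Cref{amenablePmp}, from \cite{FSZ89}) requires the Ornstein--Weiss machinery. The paper's proof of \Cref{amenableLift} is a different and substantially more involved construction: it passes to the quotient by the $\sigma$-ideal $\NULL_E$ of $E$-null sets, uses quasi-tilings (\Cref{quasitilingInduction}, \Cref{quasitiling}) to build coherent \emph{partial} lifts on pieces of prescribed measure with respect to every ergodic invariant measure (via \Cref{equivalentSets}, \Cref{disjointLifts}, \Cref{smallerSubset}), obtains convergence by Borel--Cantelli, and then repairs the leftover $E$-null (compressible) part using \Cref{compressibleCb} through \Cref{liftModNull}; none of this measure-theoretic scaffolding appears in your sketch, and the $\Z$-case/Rokhlin-tower remark does not scale to general amenable $G$. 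Your stabilizer analysis for groups with smooth conjugacy relation on subgroups is close in spirit to the paper's \Cref{amenableCb} (including the correct observation that lifts of free outer actions are automatically class-bijective), but it feeds back into part \ref{mainLift}, so the whole proposal ultimately hangs on the two unproved steps above.
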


The proof of \Cref{mainLifts}, \ref{mainLift} for the case of amenable groups
makes use of the quasi-tiling machinery developed in the work of Ornstein and Weiss
\cite{OW80}, \cite{OW87} and also uses some ideas from \cite{FSZ89}.
Also the proof of \Cref{mainLifts}, \ref{mainLift}
for the case of amalgamated free products of finite groups
also uses some ideas from \cite{Tse13}.
We do not know if the conclusion of \ref{mainLift}
can be restrengthened to having a class-bijective lift.

On the other hand we have an upper bound for groups that have this lifting property
(see \Cref{liftTreeable}).
The proof of the next result is motivated by \cite{CJ85} and \cite{FSZ89}.
\begin{prop}\label{mainLiftTreeable}
    If every outer action of a countable group $G$ lifts, then $G$ is treeable.
\end{prop}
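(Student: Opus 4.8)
The plan is to extract, from a lift of one carefully chosen outer action of $G$, a free pmp Borel action of $G$ whose orbit equivalence relation is treeable.

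Fix a free group $\mb F$ and a surjection $q\colon\mb F\thra G$, and put $N:=\ker q\tl\mb F$, so $\mb F/N=G$. Fix free pmp Borel actions $\mb F\car_B(Z,\mu)$ and $G\car_B(Y,\nu)$ (say Bernoulli actions), and set $\widehat Z:=Z\times Y$ with the free pmp Borel action of $\mb F$ given by $\gamma\cdot(z,y):=(\gamma z,\,q(\gamma)y)$. Its orbit equivalence relation $\widehat F:=E_{\mb F}^{\widehat Z}$ is treeable, being the orbit equivalence relation of a free action of a free group; fix the treeing $\mc T$ of $\widehat F$ given by the Schreier graph of $\mb F$ with respect to a free basis. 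The kernel subrelation $\widehat E:=E_N^{\widehat Z}$ equals $E_N^Z\times\Delta_Y$ (as $N$ acts trivially on $Y$); it is a normal subrelation of $\widehat F$, normalized by the generators of $\mb F$, and $\widehat F/\widehat E$ induces a faithful outer action $G\car_B\widehat Z/\widehat E$.

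By hypothesis this outer action has a lift, i.e.\ a Borel action $\widehat\alpha\colon G\car_B(\widehat Z,\widehat E)$ mapping onto it in $\Out_B(\widehat E)$. The key observation is that $\widehat\alpha$ is automatically \emph{free} and \emph{pmp}. It is free because the permutation of $\widehat Z/\widehat E=(Z/E_N^Z)\times Y$ induced by $\widehat\alpha(g)$ is the outer permutation $([z],y)\mapsto([\gamma z],gy)$ (any $\gamma\in q^{-1}(g)$), so the $Y$-coordinate of $\widehat\alpha(g)(z,y)$ is $gy$ and $\widehat\alpha$ factors onto the free action $G\car_B Y$. It is pmp because, fixing $\gamma\in q^{-1}(g)$ and comparing with the $\widehat F$-automorphism $(z,y)\mapsto(\gamma z,gy)$, one sees $\widehat\alpha(g)$ equals this automorphism followed by an element of the full group $[\widehat E]_B$, and both preserve $\mu\times\nu$ (the full-group element since $\mu\times\nu$ is $\widehat E$-invariant, being $\mb F$-invariant). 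Thus $\widehat\alpha$ is a free pmp action of $G$; let $R$ be its orbit equivalence relation. Using the lift property one checks that $R\subseteq\widehat F$ and that $R$ is a \emph{complement} to $\widehat E$ in $\widehat F$, i.e.\ $\widehat E\vee R=\widehat F$ and $\widehat E\cap R=\Delta_{\widehat Z}$.

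It thus remains to establish the crux of the matter: \emph{a complement to a normal subrelation inside a treeable pmp CBER is treeable}. Granting this, $R$ is treeable, and since $R$ is the orbit equivalence relation of the free pmp action $\widehat\alpha$, the group $G$ is treeable. For this last step the plan is to use, following \cite{FSZ89}, the quotient cocycle $c\colon\widehat F\to G$ with $c^{-1}(1)=\widehat E$ attached to $\widehat E\tl\widehat F$ (here $c(w,\gamma w)=q(\gamma)$): each $R$-class is then a transversal for the fibres of $c(w,\cdot)$ inside its $\widehat F$-class, and one wants to transfer the treeing $\mc T$ down to these transversals, in a Borel way, so as to obtain a graphing of $R$ that is connected and acyclic on every class. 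The naive attempt — join two transversal points when the $\mc T$-geodesic between them meets no third transversal point — is connected but fails to be acyclic, because branch points of $\mc T$ lying outside the transversal create cycles. Getting around this is the main obstacle; we expect to handle it by a combing argument in the spirit of \cite{CJ85}, exploiting that $N$, as a subgroup of the free group $\mb F$, is itself free, so that transversals of $\mb F/N$ can be chosen prefix-closed — hence subtrees of $\mc T$ — and then arranging, via $\widehat\alpha$, an $R$-isomorphism onto the relation given by such subtree-transversals.
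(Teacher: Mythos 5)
Your construction up to the last step is essentially the paper's argument, only dressed up: the paper simply writes $G = F_\infty/N$, takes a free pmp action $F_\infty\car_B(X,\mu)$ (a Bernoulli shift), lifts the induced outer action $G\to\Out_B(E_N^X)$, and notes the lift is free (for $g\neq 1$ and $\gamma\in q^{-1}(g)$, $\gamma x\mathrel{E_N^X}x$ would force $\gamma\in N$ by freeness of the $F_\infty$-action, so $g\cdot[x]_{E_N^X}\neq[x]_{E_N^X}$ everywhere) and that its orbit equivalence relation is contained in $E_{F_\infty}^X$, hence pmp. Your auxiliary factor $Y$ is unnecessary for freeness for exactly this reason, and your pmp argument via the full group is a fine (equivalent) way of saying that a subrelation of a measure-preserving relation is measure-preserving.

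The genuine gap is the step you yourself flag as ``the crux'': you never prove that $R$ is treeable, and the route you sketch (treat $R$ as a \emph{complement} of $\widehat E$ in $\widehat F$, transfer the treeing to transversals of the cocycle fibres, fix acyclicity by a combing/prefix-closed-transversal argument) is both harder than necessary and, as written, incomplete --- you concede the naive graphing has cycles and only ``expect'' to repair it. The missing idea is that no complement structure is needed at all: any Borel subequivalence relation of a treeable CBER is treeable (Jackson--Kechris--Louveau in the Borel setting; Gaboriau measure-theoretically), and this is exactly what the paper invokes when it says $E_G^X$ is treeable because it is contained in $E_{F_\infty}^X$. Since $R\subseteq\widehat F$ and $\widehat F$ is treeable, $R$ is treeable, and your proof closes immediately; without citing (or reproving) that subrelation theorem, the argument as submitted does not establish treeability of $G$.
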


We do not know a characterization of the class of countable groups
all of whose outer actions have a lift or a class-bijective lift.
\Cref{sectionSummary} contains a summary of what we know
about the classes of groups all of whose outer actions have a lift
(resp., a class-bijective lift).

\subsection*{Acknowledgements}
We would like to thank Aristotelis Panagiotopoulos,
for getting us to think about these problems
by asking whether (in our terminology)
every action $G\car_B X/E$ is outer,
Ben Miller for many valuable comments and suggestions,
and Adrian Ioana for helpful discussions concerning
\Cref{probOuterCBER} \ref{outerNoncompressible}.
Research was partially supported by NSF Grant DMS-1950475.

\section{Preliminaries}

\subsection{Countable Borel equivalence relations}

We review here some basic notions and results that we will use in the sequel.
A general reference is the survey paper \cite{Kec22}.
Given a CBER $E$ on $X$,
we denote for each $A\subseteq X$ by
$[A]_E = \{x\in X : \exists y \in A \,(x \E y)\}$
the \textbf{$E$-saturation} of $A$.
In particular if $x\in X$,
$[\{x\}]_E = [x]_E$ is the equivalence class of $E$.
Dually the \textbf{$E$-hull} of $A$ is the set
$\{x\in X : [x]_E\subseteq A\}$.
Finally we let $E \uhr A = E\cap A^2$ be the restriction of $E$ to $A$.
A set $A\subseteq X$ is \textbf{$E$-invariant} if $A=[A]_E$.
For each set $S$,
we denote by $\Delta_S$ the equality relation on $S$
and we also let $I_S= S^2$.

For CBERs $E,F$ on $X,Y$ resp.,
we denote by $E\oplus F$ the \textbf{direct sum} of $E,F$.
Formally this is the equivalence relation on the direct sum $X\sqcup Y$ of $X,Y$
which agrees with $E$ on $X$ and with $F$ on $Y$.
Similarly we define the direct sum $\bigoplus_n E_n$ for a sequence $(E_n)$ of CBERs.
The \textbf{product} of $E,F$ is the equivalence relation on $X\times Y$
given by $(x,y) \mathrel{E\times F}  (x', y') \iff (x \E x') \amp (y \F y')$.

If $E, F$ are CBERs on $X$ and $E\subseteq F$ (as sets of ordered pairs),
then $E$ is a \textbf{subequivalence relation} of $F$
and $F$ is an \textbf{extension} of $E$.
If every $F$-class contain only finitely many $E$-classes,
we say that $F$ has \textbf{finite index} over $E$
and if for some $N$ every $F$-class contains at most $N$ $E$-classes,
we say that $F$ has \textbf{bounded index} over $E$.
If every $F$-class contains exactly $N$ $E$-classes we write $[F:E] = N$.
Finally, $E\vee F$ is the smallest equivalence relation containing $E$ and $F$.

A \textbf{complete section} of a CBER $E$ on $X$ is
a set $S\subseteq X$ that meets every $E$-class.
A \textbf{transversal} of $E$ is a subset $T\subseteq X$
that meets every $E$-class in exactly one point.
If a Borel transversal exists,
we say that $E$ is \textbf{smooth}.
A CBER $E$ is \textbf{finite} if every $E$-class is finite
and it is \textbf{hyperfinite} if $E=\bigcup_n E_n$,
where $E_n \subseteq E_{n+1}$ and $E_n$ is finite, for each $n$.
A canonical non-smooth hyperfinite CBER is $E_0$ on $2^\N$ defined by
$x \E_0 y \iff \exists m \,\forall n \ge m \,(x_n = y_n)$.
We say that a CBER $E$ is \textbf{aperiodic} if every $E$-class is infinite.
For any CBER $E$ there is a unique decomposition $X = A\sqcup B$
into $E$-invariant Borel sets such that $E\uhr A$ is finite and $E\uhr B$ is aperiodic.
These are, resp., the finite and infinite parts of $E$.
A CBER $E$ on $X$ is \textbf{treeable}
if there is an acyclic Borel graph $\Gamma\subseteq X^2$
whose connected components are exactly the $E$-classes.
Every hyperfinite CBER is treeable.

A CBER $E$ on $X$ is \textbf{compressible}
if there is a Borel injection $T\colon X \to X$
such that $T([x]_E) \subsetneqq [x]_E$, for each $x$.
A Borel set $A\subseteq X$ is ($E$-)compressible
if $E\uhr A$ is compressible.
In that case $[A]_E$ is compressible as well and
there is a Borel injection $T\colon X\to X$ such that $T(x) \E x$,
for every $x$,
and $T([A]_E) = A$;
see \cite[Proposition 3.26]{Kec22}.
Recall also from \cite[Proposition 3.23]{Kec22} that $E$ is compressible
iff $E\cong_B E\times I_\N$
(where for two CBERs $F_1, F_2$ on $X_1, X_2$,
resp., $F_1\cong_B F_2$ means that they are \textbf{Borel isomorphic},
i.e.,
there is a Borel bijection $T\colon X_1\to X_2$ that takes $F_1$ to $F_2$)
and also $E$ is compressible
iff it contains a smooth, aperiodic subequivalence relation.

Given CBERs $E,F$ on $X,Y$,
resp.,
we say that $E$ is \textbf{Borel reducible} to $F$,
in symbols $E\le_B F$,
if there is a Borel map $T\colon X \to Y$
such that $x \E x' \iff T(x) \F T(x')$.
Such a $T$ is called a \textbf{reduction} of $E$ to $F$.
Moreover $E,F$ are \textbf{Borel bireducible},
in symbols $E\sim_B F$,
if $(E\le_B F) \amp (F\le_B E)$.
We have that $E\sim_B F$ iff
there is a Borel bijection $T\colon X/E \to Y/F$;
see \cite[Theorem 3.32]{Kec22}.

Given a countable group $G$ and a Borel action of $G$ on $X$,
denote by $E_G^X$ the CBER induced by this action,
i.e.,
the equivalence relation whose classes are exactly the orbits of this action.
The Feldman-Moore Theorem
(see, e.g., \cite[Theorem 3.3]{Kec22})
asserts that for every CBER $E$ on $X$
there is a countable group $G$ and a Borel action of  $G$ on $X$ such that $E= E_G^X$.

By a \textbf{partial subequivalence relation} of a CBER $E$ on $X$,
we mean an equivalence relation $F$ on a subset $A\subseteq X$ such that $F\subseteq E$.
A Borel finite partial subequivalence relation is abbreviated as \textbf{fsr}.

Let now $X$ be a standard Borel space and denote by $[X]^{<\infty}$
the standard Borel space of finite subsets of $X$.
If $E$ is a CBER on $X$,
we denote by $[E]^{<\infty}$ the subset of $[X]^{<\infty}$
consisting of all finite sets that are contained in a single $E$-class.
Then $[E]^{<\infty}$ is Borel.
For each set $\Phi\subseteq [E]^{<\infty}$,
an fsr $F$ of $E$ defined on the set $A\subseteq X$ is \textbf{$\Phi$-maximal},
if every $F$-class is in $\Phi$ and every finite set $S$ disjoint from $A$ is not in $\Phi$.
We now have the following result;
see \cite[Lemma 7.3]{KM04}:
If $E$ is a CBER and $\Phi\subseteq [E]^{\infty}$ is Borel,
then there is a Borel $\Phi$-maximal fsr of $E$.
The \textbf{intersection graph} of $E$ is the graph on $[E]^{<\infty}$,
where $S,T$ are connected by an edge iff
there are distinct and have nonempty intersection.
The proof of \cite[Lemma 7.3]{KM04} uses the fact that
this graph has a countable Borel coloring,
i.e.,
a Borel map $c\colon [E]^{<\infty}\to \N$,
which is a coloring of this graph.
 
For each CBER $E$ on $X$,
denote by $\INV_E$
the standard Borel space of invariant Borel probability measures on $X$,
i.e.,
the Borel probability measures on $X$ for which there is a Borel,
measure-preserving action of a countable group $G$ on $X$ with $E_G^X = E$.
We also let $\EINV_E$ be the Borel subset of $\INV_E$
consisting of all ergodic measures in $\INV_E$.
Nadkarni's Theorem
(see \cite[Theorem 5.6]{Kec22})
states that $E$ is compressible iff $\INV_E$ is empty.
The Ergodic Decomposition Theorem of Farrell and Varadarajan
(see \cite[Theorem 5.12]{Kec22})
asserts that if  $\INV_E \neq \varnothing$,
then there is a Borel surjection $\pi \colon X \thra \EINV_E$ such that
\begin{enumerate}[label=(\roman*)]
    \item \label{decompositionFirst}
        $\pi$ is $E$-invariant;
    \item
        If $X_e = \pi^{-1}(\{e\})$,
        for $e\in \EINV_E$,
        then $e(X_e) = 1$ and $e$ is the unique $E$-invariant probability measure
        concentrating on $X_e$;
    \item \label{decompositionLast}
        If $\mu\in \INV_E$,
        then $\mu = \int \pi(x) \,\diff\mu(x) = \int e \, \diff\pi_*\mu(e)$. 
\end{enumerate}
 
Moreover this map is unique in the following sense:
If $\pi, \pi'$ satisfy \ref{decompositionFirst}-\ref{decompositionLast},
then the set $\{x : \pi(x) \neq \pi'(x)\}$ is compressible. 

The sets $X_e$ are the \textbf{ergodic components} of $E$. 

We say that $E$ is \textbf{uniquely ergodic}
(resp., \textbf{finitely ergodic, countably ergodic})
if $\EINV_E$ is a singleton
(resp., finite, countable).

The Classification Theorem for hyperfinite CBERs
(see \cite[Theorem 8.4]{Kec22})
states that for aperiodic,
non-smooth,
hyperfinite $E,F$,
we have that $E\cong_B F$ iff $\EINV_E$ and $\EINV_F$
have the same cardinality.

\subsection{Cardinal algebras}

A \textbf{cardinal algebra} is a tuple $(A,0,+,\sum)$,
where $(A,0,+)$ is a commutative monoid,
and $\sum{}\colon A^\N \to A$ is an infinitary operation 
satisfying the following axioms:
\begin{enumerate}[label=(\roman*)]
    \item $\sum_i a_i = a_0 + \sum_i a_{i+1}$.
    \item $\sum_i (a_i + b_i) = \sum_i a_i + \sum_i b_i$.
    \item The \textbf{refinement axiom}:
        If $a + b = \sum_i c_i$,
        then there are $(a_i)_i$ and $(b_i)_i$ such that
        $a = \sum_i a_i$,
        $b = \sum_i b_i$
        and $a_i + b_i = c_i$,
    \item The \textbf{remainder axiom}:
        If $(a_i)_i$ and $(b_i)_i$ satisfy $a_i = b_i + a_{i+1}$,
        then there is some $c$ such that $a_i = c + \sum_j b_{i+j}$.
\end{enumerate}
We will need two consequences of these axioms.
For $0\le n\le\infty$,
let $na$ denote the sum of $n$ copies of $a$
(in particular,
let $\infty a$ denote $\sum_i a$).
\begin{enumerate}[label=(\arabic*)]
    \item For any $a,b$,
        \[
            a = a + b \implies a = a + \infty b.
        \]
        To see this,
        use the remainder axiom with $a_i = a$ and $b_i = b$.
        This gives some $c$ such that $a = c + \infty b$.
        Then
        \[
            a + \infty b
            = c + \infty b + \infty b
            = c + \infty b
            = a.
        \]
    \item The \textbf{cancellation law}:
        For any $a,b$ and $0 < n < \infty$,
        \[
            na = nb \implies a = b;
        \]
        see \cite[Theorem 2.34]{Tar49}.
\end{enumerate}

We will need the following cardinal algebras:
\begin{enumerate}[label=(\arabic*)]
    \item The collection of all CBERs up to Borel isomorphism
        is a cardinal algebra under direct sum;
        see \cite[3.C]{KM16}.
    \item Let $E$ be a CBER on $X$.
        We say that $A,B\subseteq X$ are \textbf{$E$-equidecomposable},
        denoted $A\sim_E B$,
        if there is some Borel bijection $T \colon A\to B$
        whose graph is contained in $E$.
        This is an equivalence relation,
        and we denote the class of $A$ by $\wt A$.
        Let $\K(E)$ denote the set of $E$-equidecomposability classes.
        
        Assume now that $E$ is compressible.
        Then for any countable sequence $\wt{A_0}, \wt{A_1}, \ldots$,
        we can assume that the $A_n$ are pairwise disjoint,
        and we can define the infinitary operation as follows:
        \[
            \sum_n\wt{A_n}
            := \wt{\bigcup_n A_n}
        \]
        (We define $+$ analogously,
        and we define $0$ to be the class of the empty set.)
        Then $\K(E)$ with these operations is a cardinal algebra;
        see \cite[Proposition 4.1]{Che21}.
        
        There is an action $\Aut_B(E) \car \K(E)$
        (i.e., a group action preserving $(0, +, \sum)$)
        defined by
        \[
            T \cdot \wt A
            = \wt{T(A)},
        \]
        and this descends to an action $\Out_B(E) \car \K(E)$.
\end{enumerate}

\subsection{Actions on probability spaces}

Let $(X,\mu)$ be a standard probability space,
i.e.,
a standard Borel space with a non-atomic Borel probability measure.
Let $\Aut_\mu(X)$ denote the group of Borel automorphisms
$T:X\to X$ such that $T_*\mu = \mu$,
where $T$ and $T'$ are identified if they agree on a conull set.

Let $E$ be a \textbf{pmp} CBER on $X$,
i.e., a CBER which is generated by
a measure-preserving action of a countable group.
Then $\Aut_\mu(E)$ denotes the set of $T\in\Aut_\mu(X)$
such that $x\E y\iff T(x)\E T(y)$,
for all $x,y$ in a conull subset of $X$.
Let $\Inn_\mu(E)$ denote the normal subgroup of $T\in\Aut_\mu(E)$
such that $x\E T(x)$ for almost every $x\in X$.
Then $\Out_\mu(E)$ denotes the quotient $\Aut_\mu(E)/\Inn_\mu(E)$.

All of the proofs below in the Borel setting
go through mutatis mutandis in the pmp setting.

\section{Borel actions on quotient spaces}
\label{sectionBorelActions}

\subsection{Outer and non-outer actions}
\label{outerNonOuter}
Not every Borel action $G \car_B X/E$ is outer.
For example,
let $2^\N = A\sqcup B$,
where $A$ and $B$ are complete Borel sections for $E_0$
with $\mu(A) \neq \mu(B)$,
where $\mu$ is Lebesgue measure.
Let $E = (E_0\uhr A) \oplus (E_0\uhr B)$.
Then the involution on $X/E$ sending $[x]_{E_0}\cap A$ to $[x]_{E_0}\cap B$ is not outer,
since otherwise we would have $\mu(A) = \mu(B)$.

Note that the following are equivalent:
\begin{enumerate}[label=(\arabic*)]
    \item every Borel action on $X/E$ is outer; 
    \item $i_E$ is a bijection.
\end{enumerate}

This condition is quite strong:
\begin{prop}\label{outerIso}
    Let $G$ be a countable group and let $E$ be a CBER.
    Suppose that every action $G\car_B X/E$ is outer.
    \begin{enumerate}[label=(\arabic*)]
        \item \label{bireducibleIso}
            Whenever $E\cong_B \bigoplus_{g\in G} E_g$,
            with the $E_g$ pairwise Borel bireducible,
            then the $E_g$ are pairwise Borel isomorphic.
        \item \label{outerCompressible}
            If $G$ is nontrivial and $E\cong_B E\oplus(E\times I_\N)$,
            then $E$ is compressible.
    \end{enumerate}
\end{prop}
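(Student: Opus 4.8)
The plan is to prove (1) directly and deduce (2) from it. For (1), after replacing $E$ by an isomorphic copy we may assume $X=\bigsqcup_{g\in G}X_g$, where each $X_g$ is an $E$-invariant Borel set with $E\uhr X_g = E_g$, and we must show the $E_g$ are pairwise Borel isomorphic. The idea is to manufacture a Borel action $G\car_B X/E$ to which the hypothesis applies. Since $E_g\sim_B E_h$ for all $g,h$, by \cite[Theorem 3.32]{Kec22} there is for each $g$ a Borel bijection $\phi_g\colon X_1/E_1\to X_g/E_g$, and we take $\phi_1=\id$. I would glue these into a Borel bijection $\Psi\colon X/E\to G\times(X_1/E_1)$ sending $c\in X_g/E_g$ to $(g,\phi_g^{-1}(c))$, and then transport through $\Psi$ the left-translation action $h\cdot(g,c)=(hg,c)$ of $G$ on $G\times(X_1/E_1)$, obtaining a Borel action $G\car_B X/E$ under which $h$ carries $X_g/E_g$ onto $X_{hg}/E_{hg}$.

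With this action in hand, invoke the hypothesis: it is outer, so for each $h\in G$ there is $T_h\in\Aut_B(E)$ with $[T_h(x)]_E = h\cdot[x]_E$ for all $x$. Since $h$ maps $X_1/E_1$ onto $X_h/E_h$ and both sets are $E$-invariant, this forces $T_h(X_1)\subseteq X_h$; running the same argument with $T_h^{-1}$ (which implements $h^{-1}$) gives $T_h^{-1}(X_h)\subseteq X_1$, so $T_h$ restricts to a Borel bijection $X_1\to X_h$. As $T_h\in\Aut_B(E)$ and $E_1=E\uhr X_1$, $E_h=E\uhr X_h$, this restriction is a Borel isomorphism of $E_1$ with $E_h$. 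Hence every $E_g\cong_B E_1$, which gives (1).

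For (2), the plan is to present $E$ as a $G$-indexed direct sum of pairwise Borel bireducible pieces and quote (1). Starting from $E\cong_B E\oplus(E\times I_\N)$ and iterating — $|G|-1$ times when $G$ is finite, and, when $G$ is infinite, applying the cardinal-algebra implication $a=a+b\implies a=a+\infty b$ in the cardinal algebra of CBERs under $\oplus$ and then reindexing the resulting countable direct sum by $G\setminus\{1\}$ — one obtains $E\cong_B\bigoplus_{g\in G}E_g$ with $E_1=E$ and $E_g=E\times I_\N$ for $g\ne 1$. Because $E\times I_\N\sim_B E$ (via the projection), the $E_g$ are pairwise Borel bireducible, so (1) makes them pairwise Borel isomorphic; since $G$ is nontrivial we may pick $g\ne 1$ and conclude $E\cong_B E\times I_\N$, i.e.\ $E$ is compressible by \cite[Proposition 3.23]{Kec22}.

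The delicate point is entirely in (1): one must check that $\Psi$ is a genuine Borel isomorphism of the quotient Borel spaces and that the transported $G$-action really is an honest Borel action on $X/E$, so that the hypothesis "every action $G\car_B X/E$ is outer" applies to it, and then that the two containments $T_h(X_1)\subseteq X_h$ and $T_h^{-1}(X_h)\subseteq X_1$ combine to pin down $T_h(X_1)=X_h$. These are routine once everything is set up correctly, and given (1) the deduction of (2) is immediate.
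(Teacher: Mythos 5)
Your proposal is correct and follows essentially the same route as the paper: for (1) you build the Borel action $G\car_B X/E$ by conjugating the left-translation of $G$ through Borel bijections of the quotient pieces (the paper uses a common reference quotient $Y/F$ where you use $X_1/E_1$, which is the same construction), then use outerness to get automorphisms $T_h\in\Aut_B(E)$ that restrict to isomorphisms $E_1\cong_B E_h$; and for (2) you use the cardinal algebra of CBERs and the implication $a=a+b\implies a=a+\infty b$ to write $E\cong_B E\oplus\bigoplus_{g\neq 1}(E\times I_\N)$ and apply (1), exactly as in the paper. The extra details you flag (Borelness of the glued action, and $T_h(X_1)=X_h$ from the two inclusions) are correctly handled and are just the routine verifications the paper leaves implicit.
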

\begin{proof}
    For \ref{bireducibleIso},
    suppose $E_g$ lives on $X_g$,
    and let $F$ be a CBER on $Y$ such that
    $F\sim_B E_g$ for every $g\in G$,
    and for each $g\in G$,
    fix a Borel bijection $f_g:Y/F\to X_g/E_g$.
    Define $G\car_B X/E$ for $[x]_E\in X_g/E_g$ by
    $h\cdot[x]_E = f_{hg}(f_g^{-1}([x]_E))$.
    By assumption,
    this action is induced by some $G\to\Out_B(E)$,
    which induces isomorphisms between the $E_g$.
    
    For \ref{outerCompressible},
    since $E\cong_B E\oplus (E\times I_\N)$,
    by working in the cardinal algebra of
    (Borel isomorphism classes of) CBERs,
    we have $E\cong_B E\oplus\bigoplus_{g\in G\setminus \{1\}}(E\times I_\N)$.
    So by \ref{bireducibleIso},
    we have $E\cong_B E\times I_\N$.
\end{proof}
So if $E$ is non-compressible and satisfies $E\cong_B E\oplus (E\times I_\N)$,
then every nontrivial countable group admits a non-outer action on $X/E$.
There are many such examples:
\begin{eg}\label{nonOuter}
    \leavevmode
    \begin{enumerate}[label=(\arabic*)]
        \item \label{nonOuterE0}
            (Miller)
            We have $E_0\cong E_0\oplus (E_0\times I_\N)$,
            since they are both uniquely ergodic and hyperfinite.
            More generally $E\cong_B E\oplus (E\times I_\N)$,
            for any aperiodic hyperfinite CBER $E$.
        \item A countable group $G$ is \textbf{dynamically compressible}
            if every aperiodic orbit equivalence relation of $G$
            is Borel reducible to a compressible
            orbit equivalence relation of $G$.
            Examples include amenable groups,
            and groups containing a non-abelian free group.
            If $G$ is dynamically compressible,
            then $E^{\ap}(G,\R)\cong_B
            E^{\ap}(G, \R)\oplus(E^{\ap}(G,\R)\times I_\N)$.
            where $E^{\ap}(G,\R)$ denotes the aperiodic part
            of the shift action of $G$ on $\R^G$;
            see \cite[5(B)]{FKSV21}.
    \end{enumerate}
\end{eg}

\subsection{Lifts of compressible CBERs}
Every action $G\car_B X/E$ induces a CBER $E^{\vee G}\supseteq E$
defined as follows:
\[
    x\E^{\vee G} y
    \iff \exists g\in G(g\cdot [x]_E = [y]_E).
\]
Every action $G\car_B (X,E)$ induces an action $G \car_B X/E$,
and we write $E^{\vee G}$ for the CBER induced by the latter.
Note that $E^{\vee G} = E\vee E_G^X$.
If $G$ is a subgroup of $\Aut_B(E)$ or $\Out_B(E)$,
we write $E^{\vee G}$ for the CBER given by the (outer) action induced by the inclusion map,
and if $T \in \Aut_B(E)$,
we write $E^{\vee T}$ for $E^{\vee \ev T}$.

In \cite{dRM21},
it is shown that there is a countable basis
of pairs $E \subseteq F$ of CBERs
such that there is no Borel action $G \car_B X/E$
with $F = E^{\vee G}$
(see \Cref{embeddingsOfQuotients} for a precise statement).

Given $f \in \Sym_B(X/E)$,
a \textbf{lift} of $f$ is a map $T \in \Aut_B(E)$
such that $[T (x)]_E = f( [x]_E)$ for every $x \in X$.
Given an action $G \car_B X/E$,
a \textbf{lift} of $g \in G$ is a lift of its image in $\Sym_B(X/E)$.

The following notion is from \cite{Tse13}:
\begin{defn}\label{defnLink}
    Let $E\subseteq F$ be CBERs.
    An \textbf{$(E,F)$-link} is a CBER $L\subseteq F$
    such that for every $F$-class $C$,
    every $E\uhr C$-class meets every $L\uhr C$-class exactly once.
\end{defn}

The connection to lifts is the following:
\begin{prop}\label{linkLift}
    Let $G\car_B X/E$.
    Then the following are equivalent:
    \begin{enumerate}[label=(\arabic*)]
        \item \label{link}
            There is an $(E,E^{\vee G})$-link.
        \item \label{cb}
            There is a class-bijective lift $G\car_B (X,E)$.
    \end{enumerate}
\end{prop}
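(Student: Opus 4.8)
The plan is to unwind both sides into a common combinatorial picture. For the direction \ref{cb} $\Rightarrow$ \ref{link}: given a class-bijective lift $G \car_B (X,E)$, the CBER $E_G^X$ it induces satisfies $E \vee E_G^X = E^{\vee G}$, so I would try to take $L := E_G^X$ (or a suitable subequivalence relation thereof). Class-bijectivity says precisely that $\pi_E$ restricted to each $G$-orbit is injective, i.e.\ $g\cdot x \E x \implies g\cdot x = x$. I need to check that inside a fixed $E^{\vee G}$-class $C$, each $E$-class meets each $E_G^X$-class exactly once. That each $E_G^X$-class (a $G$-orbit) meets each $E$-class \emph{at most} once is exactly class-bijectivity; that it meets each $E$-class \emph{at least} once follows because $C$ is a single $E^{\vee G}$-class, so every $E$-subclass of $C$ is of the form $g\cdot[x]_E$ for the orbit in question, and class-bijectivity lets me pick out the unique representative. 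One subtlety: $G$ need not act freely, so the $G$-orbit and the abstract group $G$ are not identified; but this causes no trouble since the link condition only refers to the orbit as a set. So $L = E_G^X$ works, and it is Borel.

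For the direction \ref{link} $\Rightarrow$ \ref{cb}: given an $(E, E^{\vee G})$-link $L$, I want to manufacture a class-bijective action $G \car_B (X,E)$ lifting the given $G \car_B X/E$. The idea is that $L$ furnishes, within each $E^{\vee G}$-class $C$, a canonical bijection between the set of $E$-classes in $C$ and each fixed $L$-class: namely, send an $E$-class $D \subseteq C$ to the unique point of $D$ lying in a given $L$-class. More usefully, for any two $E$-classes $D, D'$ in $C$ and any point $x \in D$, there is a unique point $x' \in D'$ with $x \mathrel{L} x'$; this gives a canonical Borel ``holonomy'' identification $h_{D,D'} \colon D \to D'$ along $L$. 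Now, given $g \in G$ and $x \in X$, the action on the quotient tells us $g$ should send $[x]_E$ to $g\cdot[x]_E =: [x']_E$, and these two classes lie in the same $E^{\vee G}$-class; I define the lift by $g \cdot x := h_{[x]_E, g\cdot[x]_E}(x)$, i.e.\ follow $L$ from $x$ to the designated target $E$-class. This is Borel (the Lusin--Novikov / link structure is Borel), it is class-bijective by construction (distinct $E$-classes go to distinct $E$-classes, and the graph of the map lies in $L \subseteq E^{\vee G}$ so it preserves $E^{\vee G}$ and hence $E$), and it covers the quotient action by design.

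The main obstacle, and the step I expect to require the most care, is verifying that the assignment $g \mapsto (x \mapsto h_{[x]_E, g\cdot[x]_E}(x))$ is actually a \emph{group action}, i.e.\ that $g \cdot (h \cdot x) = (gh)\cdot x$. This is where the defining property of a link — that following $L$ is ``transitive'' in the sense that $h_{D',D''} \circ h_{D,D'} = h_{D,D''}$, which holds because $L$ is itself an equivalence relation and each $L$-class meets each $E$-class once — does the work: the holonomies compose correctly because $x \mathrel{L} (h\cdot x)$ and $(h\cdot x) \mathrel{L} (g\cdot(h\cdot x))$ force $x \mathrel{L} (g\cdot(h\cdot x))$, and that point lies in the $E$-class $g\cdot h\cdot [x]_E = (gh)\cdot[x]_E$, which is the unique target, so it must equal $(gh)\cdot x$. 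Once cocycle-consistency is in hand, everything else (Borelness, that it lifts the quotient action, class-bijectivity) is routine bookkeeping, and the equivalence of \ref{link} and \ref{cb} follows.
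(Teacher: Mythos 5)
Your proposal is correct and is essentially the paper's own proof: for \ref{cb} $\Rightarrow$ \ref{link} the paper also takes $L = E_G^X$, and for \ref{link} $\Rightarrow$ \ref{cb} it defines $g\cdot x$ as the unique element of $[x]_L\cap(g\cdot[x]_E)$, which is exactly your holonomy map. Your verification of the cocycle/action identity and class-bijectivity just spells out details the paper leaves to the reader.
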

\begin{proof}
    \ref{cb} $\implies$ \ref{link}
    $E_G^X$ is a link.
    
    \ref{link} $\implies$ \ref{cb}
    Let $g\cdot x$ be the unique element in $[x]_L\cap (g\cdot[x]_E)$.
\end{proof}

\Cref{outerIso} perhaps suggests that if $E$ is compressible,
then every Borel action on $X/E$ is outer.
It turns out that something much stronger is true:
\begin{thm}\label{compressibleCb}
    Let $E$ be a compressible CBER.
    Then every Borel action on $X/E$ has a class-bijective lift.
\end{thm}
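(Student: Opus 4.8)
The plan is to reduce \Cref{compressibleCb} to the existence of an $(E, E^{\vee G})$-link via \Cref{linkLift}, and then construct such a link using the compressibility of $E$ (hence, by the remarks in the preliminaries, of $F := E^{\vee G}$, since $F \supseteq E$ and any extension of a compressible CBER is compressible). So the real content is: \emph{given compressible CBERs $E \subseteq F$, there is an $(E,F)$-link} — this is the result referred to as \Cref{compressibleLink} in the introduction. I would state and prove that as a lemma first, and then \Cref{compressibleCb} is immediate.

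For the link construction itself, I would work in the cardinal algebra $\K(F)$ of $F$-equidecomposability classes (available since $F$ is compressible), or more concretely exploit $E \cong_B E \times I_\N$. The idea, following \cite{FSZ89}, is to build $L$ one "level" at a time. Fix a Borel action of a countable group generating $F$, so that within each $F$-class $C$ the $E\uhr C$-classes are enumerated in a Borel way as $C^0, C^1, \dots$ (there may be finitely or infinitely many; compressibility of $F$ will let us assume, after an equidecomposability adjustment, a uniform "infinite $\times$ infinite" picture, i.e. every $F$-class splits into infinitely many $E$-classes each of which is $E$-compressible). An $(E,F)$-link is precisely a selection, inside each $F$-class $C$, of a partition of $C$ into Borel "transversals" $L^0, L^1, \dots$ for the $E\uhr C$-classes — that is, each $L^j$ meets each $E$-class in exactly one point, and the $L^j$ partition $C$. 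Equivalently, $L$ is the orbit equivalence relation of a Borel $\Z$-like (or $S_\infty$-like) action that permutes the $E$-classes transitively within each $F$-class while being class-bijective over $E$; constructing the $L^j$ is the same as constructing a Borel bijection between $\bigsqcup_j (E\text{-class})$ and $C$ compatible with the $E$-structure.

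The main obstacle — and the place where compressibility does the essential work — is the \textbf{matching/measure obstruction} illustrated in \Cref{outerNonOuter}: if $E$ had an invariant measure, two $E$-classes in the same $F$-class could have "different sizes" and no link could exist. Compressibility kills this: since every $E$-class $C^i$ is itself $E$-compressible, $C^i \sim_E C^i \sqcup C^i$, and more generally any two $E$-subclasses of a common $F$-class are $F$-equidecomposable once we absorb the ambient compressibility. Concretely I would (i) use $E \cong_B E \times I_\N$ to replace each $E$-class by countably many disjoint Borel copies of itself, giving a Borel assignment $x \mapsto (\text{class index}, \text{copy index})$; (ii) use the compressibility of $F$ and the cancellation/absorption laws in $\K(F)$ (the two consequences of the cardinal algebra axioms quoted in the preliminaries) to produce, within each $F$-class $C$, a Borel bijection $C \to \N \times (\text{a fixed } E\text{-class of }C)$ whose graph respects $E$ in the second coordinate; (iii) read off $L$ from the first coordinate. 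The Borelness throughout is handled by the Lusin–Novikov uniformization theorem and the fact that all the equidecomposability witnesses can be chosen by a maximality argument (as in the $\Phi$-maximal fsr construction of \cite[Lemma 7.3]{KM04}) rather than by making an arbitrary choice in each class.

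Having established the link lemma, \Cref{compressibleCb} follows: given $G \car_B X/E$, set $F = E^{\vee G}$, note $F$ is compressible, apply the lemma to get an $(E, E^{\vee G})$-link $L$, and invoke \Cref{linkLift}\,\ref{link}$\Rightarrow$\ref{cb} to obtain the class-bijective lift $G \car_B (X, E)$, where $g \cdot x$ is defined as the unique point of $[x]_L \cap (g \cdot [x]_E)$. I would double-check that the lift is genuinely an action (not merely a "partial action"): associativity $g \cdot (h \cdot x) = (gh)\cdot x$ holds because both sides lie in $[x]_L \cap (gh \cdot [x]_E)$, which is a singleton — this uses that $L$ is an \emph{equivalence relation} contained in $F$, so $[h\cdot x]_L = [x]_L$, and that the $G$-action on $X/E$ is already an action. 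That identity check is the one routine-but-essential verification I would not skip.
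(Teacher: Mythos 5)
Your reduction is exactly the paper's: you invoke \Cref{linkLift}, observe that $F = E^{\vee G}$ is compressible (correct: it contains the compressible $E$, hence a smooth aperiodic subequivalence relation), and your check that the map $g\cdot x := $ the unique point of $[x]_L\cap(g\cdot[x]_E)$ is a genuine class-bijective action is fine. But everything then rests on the link lemma (\Cref{compressibleLink}), and here your argument has a genuine gap: your step (ii) --- a uniformly Borel choice, within each $F$-class $C$, of a bijection of $C$ with $\N\times C^0$ compatible with the $E$-structure --- \emph{is} the content of \Cref{compressibleLink}, and the tools you name do not deliver it. Cancellation and absorption in the cardinal algebra $\K(F)$ only produce Borel bijections between Borel subsets of $X$ with graph contained in $F$; they carry no information about how individual $E$-classes are mapped, which is precisely the constraint ``respects $E$ in the second coordinate''. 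Indeed, within a single $F$-class there is no cardinality or measure obstruction at all (all classes are countably infinite), so cancellation is not where the difficulty lies; the whole problem is Borel uniformity of the classwise bijections. Note also that, read one way, your step (ii) asks for too much: a Borel map restricting on every $E$-class to a bijection with $\N$ (so that $L$ could be ``read off from the first coordinate'') would make $E$ smooth --- its fibers are global transversals --- and compressible CBERs such as $E_0\times I_\N$ are not smooth. The correct version is that each $E$-class corresponds to an $\N$-level and the link is given by equality of the $C^0$-coordinate, and proving that this can be done Borel-uniformly is the theorem.

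For comparison, the paper's proof of \Cref{compressibleLink} starts from exactly your Borel enumeration of $[x]_F/E$ (a ``choice sequence'' $(f_i)_{i<N}$ with $i\mapsto[f_i(x)]_E$ a bijection onto $[x]_F/E$) and then upgrades it in four steps to a choice sequence consisting of \emph{bijections of $X$}: make each $f_i$ injective, then make each $\im f_i$ a complete $E$-section, then make each $\im f_i$ $E$-compressible, and finally make each $f_i$ bijective, using at each stage the isomorphism $(E,F)\cong_B(E\times I_\N,F\times I_\N)$ (\Cref{amplification}) and, at the last stage, the fact that a compressible complete $E$-section $A$ is $E$-equidecomposable with all of $X$. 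The link is then read off from the inverses $f_i^{-1}$. Some such bootstrapping (or Miller's theorem producing a Borel $T$ with $T\uhr[x]_E$ a bijection onto $[x]_F$) is indispensable, and an appeal to $\K(F)$, Lusin--Novikov and $\Phi$-maximal fsr's does not substitute for it. (A smaller point: the ``infinite $\times$ infinite'' normalization is not automatic, since $[F:E]$ is an invariant of the pair; it can be arranged, e.g.\ by passing to $(E\times\Delta_\N, F\times I_\N)$ and restricting the resulting link, but you would need to say so.)
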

By \Cref{linkLift},
it suffices to prove the following,
independently established using a different method by Ben Miller
(see comments following \Cref{everyActionEquiv} below for his approach):
\begin{thm}\label{compressibleLink}
    Let $E\subseteq F$ be compressible CBERs.
    Then there is a smooth $(E,F)$-link.
\end{thm}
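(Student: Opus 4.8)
The plan is to build the link $L$ class by class over $F$ using compressibility of both $E$ and $F$. The key intuition: an $(E,F)$-link $L\subseteq F$ is exactly a system of bijections, for each $F$-class $C$, between the ``index set'' of $E\uhr C$-classes and the ``index set'' of $L\uhr C$-classes, realized inside $C$; requiring $L$ to be smooth means we want a Borel transversal for $L$, i.e., we want to pick one point from each $L$-class in a Borel way. Equivalently, we want a Borel set $S\subseteq X$ meeting every $E$-class in exactly one point (so $E\uhr C$ has a transversal $S\cap C$) together with a Borel ``pairing'' CBER $L$ on each $C$ whose classes biject with the $E\uhr C$-classes via $S$. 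Since $E$ is compressible it contains a smooth aperiodic subequivalence relation (as recalled after the definition of compressible), hence $E$ itself admits a smooth aperiodic ``skeleton''; but more directly, I would use that a compressible CBER $E$ is Borel isomorphic to $E\times I_\N$, and similarly $F\cong_B F\times I_\N$. So first I would replace $(E,F)$ by $(E\times I_\N,\,F\times I_\N)$ on $X\times\N$, where the extra $\N$-coordinate gives room to spread things out.

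First, pick (using Feldman--Moore) a countable group $\Gamma$ with $F=E_\Gamma^X$, and fix a Borel enumeration within each $F$-class; also fix a Borel linear order of $\N$-indices from the product decomposition. Second, I would produce a smooth aperiodic subequivalence relation $E'\subseteq E$ with a Borel transversal $S$: this exists because $E$ is compressible. Now on each $F$-class $C$, the set $S\cap C$ is a complete section for $E\uhr C$ meeting each $E\uhr C$-class in a countably infinite set. Third — and this is the heart — I would define $L$ on $C$ so that two points of $C$ are $L$-equivalent iff, roughly, they ``occupy the same slot'' relative to $S$: enumerate $S\cap C$ in a Borel way as $(s_n)_{n\in\N}$, enumerate the $E\uhr C$-classes as $(D_k)_{k\in\N}$ (each $D_k$ contains infinitely many $s_n$'s), and within $D_k$ list its $S$-points as $s_{n(k,0)}<s_{n(k,1)}<\cdots$; then let the $L$-class indexed by $j$ be $\{s_{n(k,j)}:k\}$ together with, for non-$S$ points, an assignment to slots obtained by using the compressibility-supplied injection $T:X\to X$ with $T(x)\mathrel E x$ pushing $X$ onto $S$ (so every $x$ gets a well-defined ``slot'' $j(x)$, namely the slot of the $S$-point it maps to). Define $x\mathrel L y\iff x\mathrel F y$ and $j(x)=j(y)$. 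Then each $L$-class meets each $E\uhr C$-class exactly once (in the point with the matching slot index), so $L$ is an $(E,F)$-link; and $L$ is smooth because $\{x:j(x)=0\}$, suitably refined to pick the canonical representative in each slot-$0$, $E$-class, is a Borel transversal — concretely, $\{s_{n(k,0)}:k\}$ over all classes is a Borel transversal for $L$.

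The main obstacle I expect is the simultaneous Borelness and coherence of the bookkeeping: I need the enumeration $(s_n)$ of $S\cap C$, the enumeration $(D_k)$ of $E$-classes in $C$, and the slot assignment $j$ to all vary Borel-measurably in $C$ and to be mutually consistent, and I need to handle the points of $X\setminus S$ cleanly via the retraction $T$. The idea from \cite{FSZ89} that the authors allude to is presumably exactly a device for doing this uniformly — a ``marker''-type or ``index-shifting'' argument that turns the set-theoretic bijection ``$E$-classes $\leftrightarrow$ $L$-classes'' into a Borel one using the spare copies of $\N$. Once the slot function $j:X\to\N$ is in hand with the two properties (Borel, $E$-injective on each $F$-class, $E$-surjective onto $\N$ on each $F$-class), both conclusions are immediate: $L:=\{(x,y):x\mathrel F y,\ j(x)=j(y)\}$ is the desired link, smoothness witnessed by $j^{-1}(0)$ further cut down to an $E'$-transversal inside it. A secondary subtlety is ensuring $L\subseteq F$ and that $L$ genuinely refines nothing it shouldn't — but that is built into the definition since $L\subseteq F$ by fiat and the only constraint is the ``meets exactly once'' condition, which is precisely injectivity-plus-surjectivity of $j$ restricted to each $E\uhr C$-class versus each fiber $j^{-1}(j_0)\cap C$.
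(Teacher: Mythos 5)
The decisive gap is exactly the ``bookkeeping'' you defer to a marker argument: it cannot be done, for any choice of $S$, $E'$ and $T$, unless $E$ and $F$ are smooth. Your slot assignment needs, uniformly and Borel in the class, an enumeration of $S\cap C$ in order type $\omega$ and a listing $s_{n(k,0)},s_{n(k,1)},\ldots$ of the $S$-points inside each $E$-class $D_k$. Since $S$ is a complete section, any such Borel $\omega$-listing yields a Borel transversal (take the first listed point of each class), so its existence for the $E$-classes forces $E$ to be smooth, and the enumeration of $S\cap C$ per $F$-class forces $F$ to be smooth; if instead the symbol $<$ refers to a fixed Borel linear order on $X$, the trace of $S$ on a class need not have order type $\omega$, so ``the $j$-th $S$-point'' is undefined. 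This is not a technicality that spare copies of $\N$ repair; it is precisely the obstruction the theorem must circumvent. Moreover, even where a slot function $j$ exists, smoothness of $L=\{(x,y): x \mathrel{F} y,\ j(x)=j(y)\}$ does not follow and your claimed transversal is not one. Concretely, take $X=2^\N\times\N$, $E=\Delta_{2^\N}\times I_\N$ (smooth aperiodic, hence compressible) and $F=E_0\times I_\N$: instantiating your recipe (say $E'$-classes $\{x\}\times A_k$ for a partition of $\N$ into infinite sets $A_k$, $S$ the set of minima, $T(x,n)=(x,\min A_n)$) gives $j(x,n)=n$ and $L\cong E_0\times\Delta_\N$, which is a genuine $(E,F)$-link but is not smooth; the set of slot-$0$ points $2^\N\times\{0\}$ meets each slot-$0$ class infinitely often and all other classes not at all. (A smaller slip: giving $S$-points their own slot while non-$S$ points get the slot of $T(x)$ makes some $E$-class hit a slot twice; one must use the slot of $T(x)$ throughout.)

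By contrast, the paper's proof never tries to index the link classes by positions inside a class. After reducing to constant index $N$ and amplifying by \Cref{amplification}, it builds a \emph{choice sequence} and successively upgrades it to Borel \emph{bijections} $f_i\colon X\to X$ such that $i\mapsto[f_i(x)]_E$ enumerates $[x]_F/E$; compressibility enters exactly here, to make each $\im f_i$ a compressible complete section and then to replace $f_i$ by a bijection via an $E$-preserving retraction. The link on $X\times N$, given by $(x,i)\mathrel{L}(y,j)\iff f_i^{-1}(x)=f_j^{-1}(y)$, is then smooth because every $L$-class contains exactly one point with second coordinate $0$, so $X\times\{0\}$ is a Borel transversal. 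In other words, the index set for the link classes is global (the points $z\in X$ together with the coordinate $i$), not class-internal, and that is what simultaneously makes the construction Borel and the link smooth; your outline does not secure either.
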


We will repeatedly use the following,
where we identify a positive integer $N$ with $\{0, 1, \ldots, N - 1\}$.
\begin{lem}\label{amplification}
    Let $E\subseteq F$ be compressible CBERs and let
    $N \in \{1, 2, \ldots, \N\}$.
    Then $(E, F)$ is Borel isomorphic to $(E \times I_N, F \times I_N)$,
    in symbols $(E, F) \cong_B (E \times I_N, F \times I_N)$,
    i.e.,
    there is a Borel isomorphism that takes
    $E$ to $E \times I_\N$ and $F$ to $F \times I_\N$.
\end{lem}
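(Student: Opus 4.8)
The plan is to reduce, via a single change of coordinates, to the case in which both $E$ and $F$ have a product structure over a common countable Borel quotient, where the statement becomes essentially trivial. Since $E$ is compressible, it contains a smooth aperiodic subequivalence relation $E'$ (by the characterization of compressibility recalled above); as $E' \subseteq E \subseteq F$, this $E'$ is simultaneously a smooth aperiodic subequivalence relation of $F$. First I would fix a Borel transversal $S$ of $E'$ and, using that $E'$ is aperiodic, identify $(X, E')$ Borel-isomorphically with $(S \times \N, \Delta_S \times I_\N)$ so that the $E'$-classes become the columns $\{s\} \times \N$; this kind of enumeration of a smooth aperiodic CBER is standard (see, e.g., \cite{Kec95}). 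From now on I work with $X = S \times \N$ and this copy of $E'$.

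The key observation is that, under this identification, $E$ and $F$ are themselves products over $S$. Indeed, if $(s,n) \E (s',n')$, then for all $m, m'$ we have $(s,m)\mathrel{E'}(s,n)\E(s',n')\mathrel{E'}(s',m')$, hence $(s,m) \E (s',m')$; so $E$-relatedness of two points of $S\times\N$ depends only on their $S$-coordinates. Setting $s \mathrel{\bar E} s' \iff (s,0) \E (s',0)$ thus yields a Borel equivalence relation on the standard Borel space $S$ whose classes are countable (each injects into an $E$-class), i.e.\ a CBER $\bar E$, and one checks the literal identity $E = \bar E \times I_\N$; the same argument gives a CBER $\bar F \supseteq \bar E$ on $S$ with $F = \bar F \times I_\N$. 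Consequently $(E, F) \cong_B (\bar E \times I_\N,\ \bar F \times I_\N)$, and therefore also $(E \times I_N,\ F \times I_N) \cong_B \big((\bar E \times I_\N) \times I_N,\ (\bar F \times I_\N) \times I_N\big)$.

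To conclude, fix any Borel bijection $\beta \colon \N \times N \to \N$, which exists since $\N \times N$ is countably infinite for every $N \in \{1, 2, \ldots, \N\}$. Then $(s, n, i) \mapsto (s, \beta(n,i))$ is a Borel isomorphism $S \times \N \times N \to S \times \N$ that simultaneously carries $(\bar E \times I_\N) \times I_N$ onto $\bar E \times I_\N$ and $(\bar F \times I_\N) \times I_N$ onto $\bar F \times I_\N$. Composing the isomorphisms of the two previous paragraphs gives
\[
    (E \times I_N,\ F \times I_N)
    \cong_B \big((\bar E \times I_\N) \times I_N,\ (\bar F \times I_\N) \times I_N\big)
    \cong_B (\bar E \times I_\N,\ \bar F \times I_\N)
    \cong_B (E, F),
\]
which is the claim. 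I expect the only point requiring a little care to be the middle step: verifying that $\bar E$ and $\bar F$ are genuine CBERs on $S$ and that the product identities $E = \bar E \times I_\N$ and $F = \bar F \times I_\N$ hold on the nose rather than merely up to isomorphism — but this is routine once the column decomposition of $E'$ has been pinned down, and everything else is bookkeeping.
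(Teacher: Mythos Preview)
Your argument is correct and is essentially the same as the paper's: both observe that a smooth aperiodic subequivalence relation of $E$ lets one write $(E,F)\cong_B(\bar E\times I_\N,\ \bar F\times I_\N)$, then absorb $I_N$ into $I_\N$ via a bijection $\N\times N\to\N$. The paper phrases the first step more tersely as ``$E\cong_B E\times I_\N$, so $(E,F)\cong_B(E\times I_\N, R)$ with $R$ necessarily of the form $F'\times I_\N$'', which is exactly your observation that $E$- and $F$-relatedness depend only on the $S$-coordinate once the $E'$-column decomposition is fixed.
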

\begin{proof}
    Since $E$ is compressible, $E\cong_B E\times I_\N$.
    So  $(E, F)$ is Borel isomorphic to $(E\times I_\N, R)$,
    for some $R$,
    which then must be of the form $F'\times I_\N$.
    Thus $(E, F) \cong_B (E\times I_\N, F'\times I_\N)$,
    and therefore
    $(E \times I_N, F \times I_N)
    \cong_B (E \times I_\N \times I_N, F' \times I_\N \times I_N)
    \cong_B (E \times I_\N, F' \times I_\N)
    \cong_B (E, F)$,
    since $I_\N \cong_B I_\N \times I_N$.
\end{proof}

\begin{proof}[Proof of \Cref{compressibleLink}]
    We can assume that every $F$-class contains exactly $N$ $E$-classes,
    where $N \in \{1, 2, \ldots, \N\}$.
    Below, $i < N$ means $i\in N$.
    
    Fix a Borel action of a countable group $\Gamma$ generating $F$.
    
    Fix a \textbf{choice sequence} for $(E,F)$,
    that is,
    a sequence $(f_i)_{i<N}$ of Borel maps $X\to X$
    such that for every $x\in X$,
    the function $i\mapsto [f_i(x)]_E$
    is a bijection from $N$ to $[x]_F/E$.
    For instance,
    define $f_i$ inductively
    by setting $f_0(x) = x$ and $f_i(x) = \gamma \cdot x$,
    where $\gamma$ is least (in some enumeration of $G$) such that
    $\gamma\cdot x$ is not $E$-related to any $f_j(x)$ for $j < i$.
    
    We can assume that each $f_i$ is injective.
    By \Cref{amplification},
    it suffices to define an injective choice sequence
    for $(E\times I_\N, F\times I_\N)$.
    Fix a pairing function $\ev{-,-}:\N\times\Gamma\to \N$.
    Then we take the choice sequence for $(E\times I_\N, F\times I_\N)$
    defined by $(x,n)\mapsto (f_i(x),\ev{n,\gamma})$,
    where $f_i$ is a choice sequence for $(E, F)$ and
    $\gamma$ is least such that $\gamma \cdot x = f_i(x)$.
    
    We can further assume that each $\im f_i$ is a complete $E$-section.
    To see this,
    endow $N$ with some group operation $\star$,
    and take the choice sequence for $(E\times I_N, F\times I_N)$
    defined by $(x,k)\mapsto (f_{i\star k}(x),k)$,
    where $(f_i)$ is a choice sequence for $(E,F)$ with each $f_i$ injective.
    
    Moreover, we can assume that each $\im f_i$ is $E$-compressible.
    To see this,
    take the choice sequence for $(E\times I_\N, F\times I_\N)$
    defined by $(x,n)\mapsto (f_i(x),n)$,
    where $(f_i)$ is a choice sequence for $(E,F)$,
    with each $f_i$ injective and $\im f_i$ a complete $E$-section.
    
    Finally,
    we can assume that each $f_i$ is bijective.
    To see this,
    since $\im f_i$ is an $E$-compressible complete section for $E$,
    there is some Borel injection $T_i$ such that $T(x) \E x$ for every $x$,
    and $T_i(X) = \im f_i$.
    Then $(T_i^{-1}\circ f_i)$ is a choice sequence for $(E,F)$
    with each $T_i^{-1}\circ f_i$ bijective.

    Now we can define a smooth $(E\times I_N, F\times I_N)$-link $L$ as follows:
    \[
        (x,i) \mr L (y,j)
        \iff f_i^{-1}(x) = f_j^{-1}(y)
    \]
    and we are done again by \Cref{amplification}.
\end{proof}

\begin{cor}\label{everyActionEquiv}
    Let $E$ be an aperiodic CBER satisfying
    $E\cong_B E\oplus (E\times I_\N)$
    (for instance,
    any aperiodic hyperfinite CBER).
    Then the following are equivalent:
    \begin{enumerate}[label=(\arabic*)]
        \item \label{everyActionCb}
            Every Borel action on $X/E$ has a class-bijective lift.
        \item \label{everyActionLift}
            Every Borel action on $X/E$ has a lift.
        \item \label{everyActionOuter}
            Every Borel action on $X/E$ is outer.
        \item \label{everyActionOuterGroup}
            There is a nontrivial countable group $G$ such that
            every action $G\car_B X/E$ is outer.
        \item \label{everyActionCompressible}
            $E$ is compressible.
    \end{enumerate}
\end{cor}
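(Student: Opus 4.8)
The plan is to establish the cycle of implications
\[
    \ref{everyActionCompressible} \implies \ref{everyActionCb} \implies \ref{everyActionLift} \implies \ref{everyActionOuter} \implies \ref{everyActionOuterGroup} \implies \ref{everyActionCompressible}.
\]
All of the substance is already in hand: $\ref{everyActionCompressible} \implies \ref{everyActionCb}$ is literally \Cref{compressibleCb}, and the closing implication $\ref{everyActionOuterGroup} \implies \ref{everyActionCompressible}$ is an instance of \Cref{outerIso}\ref{outerCompressible}. The three intermediate arrows are formal.

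For $\ref{everyActionCb} \implies \ref{everyActionLift}$ there is nothing to prove, since a class-bijective lift is a fortiori a lift. For $\ref{everyActionLift} \implies \ref{everyActionOuter}$: if a Borel action $G \car_B X/E$ admits a lift $\rho \colon G \to \Aut_B(E)$, then its defining homomorphism $G \to \Sym_B(X/E)$ equals $i_E \circ p_E \circ \rho$, hence factors through $i_E$, which is precisely the definition of the action being outer; since by hypothesis every Borel action on $X/E$ has a lift, every such action is outer. For $\ref{everyActionOuter} \implies \ref{everyActionOuterGroup}$, take $G$ to be any nontrivial countable group, for example $\Z/2\Z$.

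It remains to carry out $\ref{everyActionOuterGroup} \implies \ref{everyActionCompressible}$, which is the only place the hypothesis $E \cong_B E \oplus (E \times I_\N)$ is used (aperiodicity is in fact not needed for the argument; it is included because the motivating examples, the aperiodic hyperfinite CBERs, satisfy this isomorphism by \Cref{nonOuter}\ref{nonOuterE0}). Let $G$ be the nontrivial countable group provided by $\ref{everyActionOuterGroup}$, so that every action $G \car_B X/E$ is outer; then the hypotheses of \Cref{outerIso}\ref{outerCompressible} hold verbatim, and we conclude that $E$ is compressible. I do not expect any genuine obstacle here: all the difficulty has been packaged into \Cref{compressibleCb} and \Cref{outerIso}, and the corollary simply records that, in the presence of $E \cong_B E \oplus (E \times I_\N)$, the a priori distinct liftability and outerness conditions all collapse to compressibility. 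The one point worth double-checking is that \Cref{outerIso}\ref{outerCompressible} is stated with exactly the data available here.
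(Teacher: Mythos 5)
Your proposal is correct and follows essentially the same route as the paper: the paper's proof is exactly the cycle \ref{everyActionCb}$\implies$\ref{everyActionLift}$\implies$\ref{everyActionOuter}$\implies$\ref{everyActionOuterGroup} (all immediate), \ref{everyActionOuterGroup}$\implies$\ref{everyActionCompressible} by \Cref{outerIso}, and \ref{everyActionCompressible}$\implies$\ref{everyActionCb} by \Cref{compressibleCb}. Your extra remarks (unwinding why a lift forces outerness, and noting where the hypothesis $E\cong_B E\oplus(E\times I_\N)$ is used) are accurate but add nothing beyond the paper's argument.
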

\begin{proof}
    \ref{everyActionCb} $\implies$ \ref{everyActionLift}
    Immediate.
    
    \ref{everyActionLift} $\implies$ \ref{everyActionOuter}
    Immediate.
    
    \ref{everyActionOuter} $\implies$ \ref{everyActionOuterGroup}
    Immediate.
    
    \ref{everyActionOuterGroup} $\implies$ \ref{everyActionCompressible}
    Follows from \Cref{outerIso}.
    
    \ref{everyActionCompressible} $\implies$ \ref{everyActionCb}
    Follows from \Cref{compressibleCb}.
\end{proof}

Concerning \Cref{compressibleLink},
Ben Miller derives this from the following more general result
whose proof uses Proposition 4.1 and 4.2 from \cite{Mil18}.
\begin{thm}[Miller]
    Let $E$ and $F$ be compressible CBERs on $X$ and $Y$ respectively,
    and let $f \colon X/E \to Y/F$ be Borel.
    Then the following are equivalent:
    \begin{enumerate}
        \item $f$ is smooth-to-one,
            i.e.,
            for every $y \in Y$,
            the restriction of $E$ to
            $\{x \in X : f([x]_E) = [y]_F\}$ is smooth.
        \item There is a Borel function $T \colon X \to Y$
            such that for every $x \in X$,
            the restriction $T\uhr [x]_E$
            is a bijection from $[x]_E$ to $f([x]_E)$.
    \end{enumerate}
\end{thm}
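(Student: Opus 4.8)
The implication $(2)\Rightarrow(1)$ is soft. Given such a $T$, fix an $F$-class $D$, a point $y_0\in D$, and set $Z_D=\{x\in X:f([x]_E)=D\}$. Since $T$ restricts to a bijection $[x]_E\to D$ on every $E$-class contained in $Z_D$, each such class contains exactly one point with $T$-value $y_0$; hence $\{x\in Z_D:T(x)=y_0\}$ is a Borel transversal for $E\uhr Z_D$, so $E\uhr Z_D$ is smooth and $f$ is smooth-to-one.

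For $(1)\Rightarrow(2)$ the plan is to recast the problem as the construction of a common transversal. Put $W=\{(x,y)\in X\times Y:f([x]_E)=[y]_F\}$, a Borel set, and equip $W$ with the CBER $\hat E$ whose classes are the sets $[x]_E\times\{y\}$ and the CBER $\hat F$ whose classes are the sets $\{x\}\times f([x]_E)$. A Borel set $M\subseteq W$ meeting every $\hat E$-class in exactly one point and every $\hat F$-class in exactly one point is precisely the graph of a Borel $T\colon X\to Y$ as required, and conversely; so it suffices to build such an $M$. Four observations organize the work: $\hat F$ is smooth, being the kernel of the projection $W\to X$ onto the first coordinate; $\hat E$ and $\hat F$ are both compressible, since a compression of $E$ (resp.\ $F$) applied in the first (resp.\ second) coordinate of $W$ compresses $\hat E$ (resp.\ $\hat F$); the join $\hat E\vee\hat F$ has countable classes, namely the ``grids'' $[x]_E\times f([x]_E)$, on which $\hat E$ and $\hat F$ cut out rows and columns with each row meeting each column in a single point; and, after replacing $Y$ by the $F$-invariant Borel set $\{y\in Y:\exists x\,(f([x]_E)=[y]_F)\}$, we may assume $f$ is surjective.

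The construction of $M$ then proceeds in two stages. In Stage 1 I would use hypothesis (1) to produce a Borel $T_0\colon X\to Y$ with $[T_0(x)]_F=f([x]_E)$ and with $T_0\uhr[x]_E$ injective for every $x$: one starts from an arbitrary Borel selector $\rho$ of $W$ (legitimate, since its $x$-sections $f([x]_E)$ are countable, so Lusin--Novikov applies) and then uses the smoothness of $E\uhr Z_D$ for every $F$-class $D$ to correct $\rho$, Borel-uniformly over $Y$, so that no two points of a common $E$-class receive the same value. This is the step that genuinely consumes the smooth-to-one hypothesis, and where Miller's Propositions 4.1 and 4.2 of \cite{Mil18} come in. In Stage 2 I would promote $T_0$ to $T$ using compressibility, following the closing paragraphs of the proof of \Cref{compressibleLink}: passing to one further amplification so that, as with the maps $f_i$ there, $\im T_0$ may be taken to be an $F$-compressible complete section, one gets $\wt{\im T_0}=\wt Y$ in the cardinal algebra $\K(F)$, hence a Borel injection $S\colon Y\to Y$ with $S(y)\mathrel F y$ and $S(Y)=\im T_0$; then $S^{-1}\circ T_0$ still satisfies $[S^{-1}(T_0(x))]_F=f([x]_E)$ and now restricts to a bijection $[x]_E\to f([x]_E)$ on each $E$-class, so its graph is the sought $M$.

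The hard part is Stage 1: extracting the single Borel function $T_0$ from the merely fibrewise smoothness of $E$ over the $F$-classes. This is where the smooth-to-one hypothesis is essential — it fails for general Borel $f$ — and it is the point requiring the nontrivial descriptive-set-theoretic input. Stage 2, by contrast, is the by-now-familiar ``compressibility straightens injections into bijections'' phenomenon already used in \Cref{compressibleLink} and in \cite{FSZ89}, here in its bipartite, perfect-matching, guise.
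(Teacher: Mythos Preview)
First, note that the paper does not actually prove this theorem: it is attributed to Miller, with only the remark that the proof uses Propositions~4.1 and~4.2 of \cite{Mil18}. So there is no detailed argument in the paper to compare yours against.

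Your direction $(2)\Rightarrow(1)$ is fine. In $(1)\Rightarrow(2)$, however, Stage~2 has a genuine gap. Suppose $f$ is not injective and some $F$-class $D$ has fibre $Z_D$ containing at least two distinct $E$-classes $C_1,C_2$. Then
\[
    \im T_0\cap D \;=\; \bigcup_{[x]_E\subseteq Z_D} T_0([x]_E)
\]
is in general strictly larger than any single $T_0(C_i)$. Your Borel injection $S\colon Y\to Y$ with $S(y)\mathrel F y$ and $S(Y)=\im T_0$ restricts to a bijection $D\to D\cap\im T_0$, so $(S^{-1}\circ T_0)\uhr C_1$ has image $S^{-1}(T_0(C_1))\subsetneq S^{-1}(D\cap\im T_0)=D$; it is still only injective, not surjective onto $D$. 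The analogy with the proof of \Cref{compressibleLink} breaks exactly here: there the maps $f_i$ are indexed so that each $f_i$ handles precisely one $E$-class per $F$-class, and the straightening $T_i^{-1}\circ f_i$ is applied separately for each $i$. Your single $T_0$ conflates all the $E$-classes in a fibre, and no amplification on the $Y$ side undoes that.

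Relatedly, you have mislocated where the smooth-to-one hypothesis does its work. Making $T_0$ injective on each individual $E$-class is not the crux; what the argument needs is a Borel mechanism to \emph{separate} the (possibly uncountably many) $E$-classes within each fibre $Z_D$, so that the compressibility trick can be run class by class as in \Cref{compressibleLink}. That separation is exactly what smooth-to-one is supposed to deliver, and your outline does not indicate how to extract it.
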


However,
one only needs the special case where $f$ is countable-to-one.
Applying this to the case where $E \subseteq F$ and $f([x]_E) = [x]_F$,
we find a Borel map $T \colon X \to X$ such that
$T\uhr [x]_E$ is a bijection from $[x]_E$ to $[x]_F$.
Then we can define the link $L$ by $x \mr L y \iff T(x) = T(y)$.

To show generic lifting,
we need a strengthening of generic compressibility,
whose proof is a simple modification of the proof of
\cite[Corollary 13.3]{KM04}.
A more general version appears in \cite[Theorem 11.1]{Mil17}.
We include a proof for the reader's convenience.

\begin{thm}\label{comeagerCompressible}
    Let $E\subseteq F$ be aperiodic CBERs on a Polish space $X$.
    Then there is a comeager $F$-invariant,
    $E$-compressible Borel subset of $X$.
\end{thm}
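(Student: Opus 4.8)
The plan is to follow the Kechris--Miller argument for generic compressibility (\cite[Corollary 13.3]{KM04}) and upgrade it so that the resulting comeager set is simultaneously $E$-compressible. Recall that $E$ is compressible iff it contains a smooth aperiodic subequivalence relation, so the target is: find a comeager $F$-invariant Borel $Y$ and an aperiodic smooth subequivalence relation of $E \uhr Y$. First I would fix a countable group $\Gamma$ acting on $X$ with $E = E_\Gamma^X$, and (shrinking to a comeager invariant set, using that a Polish group topology can be put on $X$ compatibly) arrange that the action is continuous and each orbit is dense in its closure — i.e. pass to the situation where we may run a Baire-category argument. The key classical fact is that for an aperiodic continuous action with dense orbits, the set $A_n = \{x : x \text{ is not } E_n\text{-related to its image under some fixed moving family}\}$ type constructions produce, generically, a decreasing sequence of complete sections with empty intersection, which is exactly a witness to compressibility. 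Concretely, I would build a sequence $(S_k)$ of Borel complete sections for $E$ on a comeager invariant set with $\bigcap_k S_k = \varnothing$ and $S_{k+1}\subseteq S_k$; from such a sequence one reads off a smooth aperiodic subequivalence relation of $E$ (the classes being the $E$-classes of the ``levels'' $S_k \setminus S_{k+1}$ threaded together), hence $E$-compressibility.

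The main technical step, and the place where I expect the real work, is arranging both invariance properties at once: the final set must be $F$-invariant (not merely $E$-invariant) while the compression datum lives inside $E$. Since $E \subseteq F$ and both are countable, I would fix a countable group $\Delta \supseteq \Gamma$ with $F = E_\Delta^X$, and perform the category construction so that the ``bad'' sets being thrown away are $\Delta$-invariant (hence $F$-invariant) comeager sets, while the positive construction of the nested sections $S_k$ uses only $\Gamma$. The aperiodicity of $E$ is what guarantees that at each stage one can find, on a comeager set, a point of $[x]_E$ outside any prescribed finite ``already-used'' portion; aperiodicity of $F$ is not strictly needed beyond that of $E$, but having it makes the $F$-invariant shrinking harmless. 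I would interleave: at stage $k$, first intersect with a comeager $F$-invariant set on which the relevant genericity holds, then define $S_{k+1}$ inside $S_k$ as the (relatively) generic complete subsection that ``moves off'' $X\setminus S_k$ appropriately.

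A cleaner route, which I would try first, is to invoke \Cref{compressibleLink} or \Cref{compressibleCb} as a black box after first reducing to the compressible case generically: namely, show directly that there is a comeager $F$-invariant Borel $Y$ on which $E \uhr Y$ is already compressible, and nothing more. For this one uses that on a Polish space an aperiodic CBER is generically compressible (\cite[Corollary 13.3]{KM04} applied to $E$), giving a comeager $E$-invariant $E$-compressible $Y_0$; the obstacle is that $Y_0$ need not be $F$-invariant. To fix this, replace $Y_0$ by its $F$-hull $Y_1 = \{x : [x]_F \subseteq Y_0\}$, which is $F$-invariant and Borel; it remains comeager because $F$ is countable (a countable intersection of comeager translates), and $E \uhr Y_1$ is still compressible since it is the restriction of the compressible $E\uhr Y_0$ to the $E$-invariant subset $Y_1 \subseteq Y_0$ (restriction of a compressible CBER to an invariant Borel set is compressible, as each $E$-class of $Y_1$ is an $E$-class of $Y_0$). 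I expect the subtle point here to be verifying that the $F$-hull of a comeager set is comeager in this setting, which reduces to: for each $\delta \in \Delta$, $\delta \cdot Y_0$ is comeager — true because $\delta$ is a Borel automorphism and comeagerness is preserved under Borel automorphisms that are continuous on a comeager set, which one arranges by the usual change-of-Polish-topology. That is the argument I would write up, falling back to the hands-on nested-sections construction only if the hull argument hits a snag with the topology change interacting with $E$-invariance of $Y_0$.
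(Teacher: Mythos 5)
Your ``cleaner route'' breaks at exactly the point you flag, and the proposed repair does not work. The $F$-hull of a comeager set need not be comeager, because a Borel automorphism of a Polish space need not preserve the meager ideal, and your rescuing principle --- ``comeagerness is preserved under Borel automorphisms that are continuous on a comeager set'' --- is false: every Borel map is continuous on a comeager set, yet there are Borel involutions swapping a comeager set with a meager one. Concretely, let $E = E_0$ on $2^\N$, let $W$ be an $E_0$-invariant comeager null Borel set (saturate a dense $G_\delta$ null set), and let $T$ be a Borel automorphism interchanging $W$ and its (uncountable, conull, meager) complement; put $F = E_0 \vee E_T$. Then $Y_0 = W$ is a comeager, $E$-invariant, $E$-compressible Borel set ($E_0\uhr W$ carries no invariant probability measure by unique ergodicity of $E_0$, so Nadkarni applies), but its $F$-hull is \emph{empty}, since $[x]_F$ always contains $T(x)$ and $T$ moves every point across $W$. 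So hulling a black-box witness of \cite[Corollary 13.3]{KM04} cannot prove \Cref{comeagerCompressible}; the witness must be built relative to $F$ from the start. The change-of-topology fallback does not help either: the theorem asserts comeagerness for the \emph{given} Polish topology, and a finer Polish topology with the same Borel sets can have a genuinely different meager ideal (e.g., declaring countably many closed nowhere dense sets clopen can turn a comeager set into a nowhere dense one), so a proof carried out after retopologizing establishes a different statement. Your first, hands-on route suffers from the same retopologization issue and, as sketched, offers no mechanism forcing the surviving comeager set to be $F$-invariant rather than merely $E$-invariant.

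The missing idea, which is how the paper proceeds, is to parameterize the compressibility witness and quantify over $F$-classes before choosing the generic parameter: one builds finite partial subequivalence relations $E_s$ of $E$ ($s \in \N^{<\N}$) with a fixed complete section $A_0$ as transversal, sets $E_\alpha = \bigcup_n E_{\alpha\uhr n}$ for branches $\alpha \in \N^\N$, shows that for each $a \in A_0$ the set of $\alpha$ making $[a]_{E_\alpha}$ infinite is comeager, and then applies Kuratowski--Ulam in the pair $(\alpha, x)$ to find a single $\alpha$ for which
\[
    C = \{x : \forall a \in A_0\cap [x]_F \ ([a]_{E_\alpha}\text{ is infinite})\}
\]
is comeager. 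The set $C$ is $F$-invariant by its very definition (the condition ranges over the whole $F$-class), and $E$-compressible because $\dom(E_\alpha)\cap C$ is a smooth aperiodic complete section for $E\uhr C$. No topology change and no hull operation are needed; the genericity is spent on the parameter $\alpha$, not on translating $Y_0$ around by $F$.
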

\begin{proof}
    Fix a Borel coloring $c \colon [E]^{<\infty} \to \N$
    of the intersection graph.
    Write $X = \bigsqcup_{n\in\N} A_n$,
    where each $A_n$ is a Borel set meeting every $E$-class infinitely often;
    for instance,
    write $X = \bigsqcup_{(n,m)\in\N^2} B_{n,m}$,
    where each $B_{n,m}$ is a complete $E$-section
    (see \cite[1.2.6]{CM17}),
    and take $A_n = \bigcup_m B_{n,m}$.
    Let $\N^{<\N}$ denote the set of finite strings in $\N$.
    For $s \in \N^{<\N}$,
    let $\len(s)$ denote the length of $s$.
    For $s, t \in \N^{<\N}$,
    we write $s \preceq t$ to mean that $s$ is a prefix of $t$.
    We define fsr's $\{E_s\}_{s\in\N^{<\N}}$ of $E$ such that
    \begin{enumerate}[label=(\roman*)]
        \item if $s\preceq t$,
            then $E_s\subseteq E_t$,
        \item $A_0$ is a transversal for $E_s$,
        \item every $E_s$-class is contained in $\bigsqcup_{k\le\len(s)} A_k$.
    \end{enumerate}
    We proceed by induction on the length of $s$.
    Let $E_\varnothing$ be the equality relation on $A_0$.
    Now for each $a\in A_0$,
    let $[a]_{E_{s\concat i}}$ be the unique set,
    if it exists,
    of the form $[a]_{E_s}\sqcup S$,
    where $S\in [E]^{<\infty}$ is contained in $A_{\len(s)+1}$
    and $c([a]_{E_s}\sqcup S) = i$,
    and otherwise set $[a]_{E_{s\concat i}} = [a]_{E_s}$.
    This defines an fsr $E_s$ with the desired properties.
    
    For every $\alpha\in\N^\N$,
    let $E_\alpha = \bigcup_n E_{\alpha\uhr n}$.
    We claim that for every $a\in A_0$,
    we have
    \[
        \forall^*\alpha \,
        (\text{$[a]_{E_\alpha}$ is infinite}),
    \]
    where $\forall^*\alpha \, \Phi(\alpha)$ means that the set
    $\{\alpha\in\N^\N : \Phi(\alpha)\}$ is comeager
    (see \cite[8.J]{Kec95}).
    It suffices to show that for every $n$,
    we have
    \[
        \forall^*\alpha \,
        (|[a]_{E_\alpha}| > n).
    \]
    Since the set $\{\alpha\in\N^\N:|[a]_{E_\alpha}| > n\}$ is open,
    it suffices to show that it is dense.
    Fix some $s\in\N^{<\N}$.
    Let $S\in [E]^{<\infty}$ be a subset of $A_{\len(s)+1}$ with $|S| > n$.
    Then if $c([a]_{E_s}\sqcup S) = i$,
    then for every $\alpha\succ s\concat i$,
    we have $|[a]_{E_\alpha}| \ge |[a]_{E_{s\concat i}}| > n$,
    so we are done.
    
    Thus for every $x\in X$,
    we have
    \[
        \forall a\in A_0\cap [x]_F \,
        \forall^*\alpha \,
        (\text{$[a]_{E_\alpha}$ is infinite}),
    \]
    or equivalently
    \[
        \forall^*\alpha \, \forall a\in A_0\cap [x]_F \,
        (\text{$[a]_{E_\alpha}$ is infinite}),
    \]
    so by the Kuratowksi-Ulam theorem \cite[8.K]{Kec95},
    we have
    \[
        \forall^*\alpha \, \forall^* x \, \forall a\in A_0\cap [x]_F \,
        (\text{$[a]_{E_\alpha}$ is infinite}),
    \]
    so in particular,
    there is some $\alpha\in\N^\N$
    such that the $F$-invariant set
    \[
        C := \{x\in X:\forall a\in A_0\cap [x]_F \,
        (\text{$[a]_{E_\alpha}$ is infinite})\}
    \]
    is comeager.
    Note that $C$ is $E$-compressible,
    since $\dom(E_\alpha)\cap C$ is an $(E\uhr C)$-compressible,
    complete $(E\uhr C)$-section,
    so we are done.
\end{proof}
\begin{cor}\label{genericCb}
    Let $E$ be an aperiodic CBER on a Polish space $X$.
    Then for any Borel action $G\car_B X/E$,
    there is a comeager $E^{\vee G}$-invariant
    Borel subset $Y\subseteq X$ such that
    $G\car_B Y/E$ has a class-bijective lift.
\end{cor}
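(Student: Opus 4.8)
The plan is to combine \Cref{comeagerCompressible} with \Cref{compressibleCb} in the obvious way, being careful about invariance. First I would observe that the action $G \car_B X/E$ induces the CBER $F := E^{\vee G} \supseteq E$, and that $F$ is generated by a Borel action of the countable group generated by $G$ together with a countable group generating $E$ (via the Feldman--Moore theorem); in particular $F$ is a CBER with $E \subseteq F$. I would then want to apply \Cref{comeagerCompressible} to the pair $E \subseteq F$, but that theorem requires both $E$ and $F$ to be aperiodic. Aperiodicity of $E$ is given; aperiodicity of $F$ is immediate since $E \subseteq F$ and $E$ is aperiodic (every $F$-class contains an $E$-class, hence is infinite). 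So \Cref{comeagerCompressible} yields a comeager, $F$-invariant (equivalently $E^{\vee G}$-invariant), $E$-compressible Borel set $Y \subseteq X$.

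Next I would check that restricting the action to $Y$ makes sense: since $Y$ is $E^{\vee G}$-invariant, for every $g \in G$ and every $E$-class $[x]_E \subseteq Y$ we have $g \cdot [x]_E \subseteq Y$ as well (because $x \mathrel{E^{\vee G}} y$ whenever $g \cdot [x]_E = [y]_E$), so the formula $g \cdot [x]_E$ defines a Borel action $G \car_B Y/E$. This uses that $Y$ being $E^{\vee G}$-invariant is exactly the condition needed for $Y/E$ (more precisely $\{[x]_E : x \in Y\}$) to be $G$-invariant inside $X/E$. The restricted relation $E \uhr Y$ is compressible because $Y$ is $E$-compressible by construction.

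Finally, since $E \uhr Y$ is a compressible CBER, \Cref{compressibleCb} applies directly to the Borel action $G \car_B Y/E$ and gives a class-bijective lift $G \car_B (Y, E \uhr Y)$, which is what we wanted. The only points requiring any care are the bookkeeping about why $E^{\vee G}$-invariance of $Y$ is the right invariance hypothesis (so that the action genuinely restricts and so that $E^{\vee G}$-invariance matches the $F$-invariance output of \Cref{comeagerCompressible}), and noting the trivial reduction of the aperiodicity hypothesis on $F$ to that on $E$; there is no substantial obstacle beyond assembling these pieces. One should also remark that $Y$ is automatically $E$-invariant since it is $E^{\vee G} \supseteq E$-invariant, so "$G \car_B Y/E$" is unambiguous.
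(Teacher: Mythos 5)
Your proposal is correct and follows exactly the paper's argument: apply \Cref{comeagerCompressible} with $F = E^{\vee G}$ to obtain a comeager $E^{\vee G}$-invariant, $E$-compressible Borel set $Y$, and then conclude via \Cref{compressibleCb} applied to the restricted action $G\car_B Y/E$. The additional checks you record (aperiodicity of $E^{\vee G}$, that the action restricts to $Y/E$) are routine verifications that the paper leaves implicit.
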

\begin{proof}
    Apply \Cref{comeagerCompressible} with $F = E^{\vee G}$.
    Then the result follows from \Cref{compressibleCb}.
\end{proof}

In conclusion,
let us say that an aperiodic CBER $E$ is \textbf{outer}
if every $G\car_B X/E$ is outer,
or equivalently $i_E$ is a bijection.
We have seen that every compressible CBER is outer,
while there are non-outer CBER.
However we have the following problems:
\begin{prob}\label{probOuterCBER}
    \leavevmode
    \begin{enumerate}[label=(\arabic*)]
        \item \label{outerNoncompressible}
            Are there outer, non-compressible CBER?
        \item Characterize the outer CBERs.
    \end{enumerate}
\end{prob}

Concerning the first part of this problem,
we note the following possible approach to finding such an example:

Assume that there is a a free,
pmp action of a countable group $G$
on a standard probability space $(X,\mu)$
with the following properties:
\begin{enumerate}[label=(\roman*)]
    \item \label{cohopfian}
        $G$ is co-Hopfian
        (i.e., injective morphisms of $G$ into itself are surjective)
        and $G$ has no non-trivial finite normal subgroups
        (e.g., $\SL_3(\Z))$,
    \item \label{totallyErgodic}
        the action is totally ergodic
        (i.e., every infinite subgroup acts ergodically)
        and satisfies cocycle superrigidity
        (i.e., every cocycle of the action to a countable group
        is cohomologous to a homomorphism),
    \item \label{outTrivial}
        $\Out_\mu(E_G^X)$ is trivial.
\end{enumerate}

There are many examples that satisfy \ref{totallyErgodic}
and others that satisfy \ref{outTrivial}
but it does not seem to be known whether there are examples that satisfy both.
Assuming that such an action exists,
one can see that the first part of the above problem has a positive answer.

By going to a $G$-invariant Borel set,
we can assume that $\mu$ is the unique invariant measure for this action.
Then if $Z\subseteq X$ is Borel and $G$-invariant of measure 1,
we have that $Y = X \setminus Z$ is compressible.
Put $E = E_G^X$.
Let now $f\in \Sym_B(X/E)$ and
let $T\colon X\to X$ be Borel such that $f([x]_E) = [T(x)]_E$.
Then $T$ is a reduction of $E$ to $E$ and so
it gives rise to a cocycle $\alpha$ of this action into $G$,
which is therefore cohomologous to a homomorphism $\varphi \colon G \to G$.
Thus we can find another Borel map $S$ with $S(x) \E T(x)$ and
$S(g\cdot x) = \varphi(g)\cdot S(x)$, a.e.
Let $N = \ker(\varphi)$.
If it is not trivial, it must be infinite.
Then for $g\in N$, $S(g\cdot x) = S(x)$, a.e.,
so by the ergodicity of the $N$-action,
$S$ is constant, a.e.,
which is a contradiction.
So $N$ is trivial and thus $\varphi$ is injective,
therefore an automorphism.
It follows that $S$ is in $\Aut_\mu(E)$ and thus in $\Inn_\mu (E)$.
Therefore there is an $E$-invariant Borel set $Z\subseteq X$ of measure 1
with $f \uhr (Z/E)$ the identity.
Then $f \uhr (Z/E)$ can be lifted to the identity of $Z$.
Moreover $Y = X\setminus Z$ is compressible,
so,
by \Cref{compressibleCb}
$f \uhr (Y/E)$ can be lifted to some Borel automorphism of $E \uhr Y$.
Thus $f$ is an outer permutation.

Concerning the second part of the problem,
note that by \Cref{everyActionEquiv},
an aperiodic hyperfinite CBER is outer iff it is compressible.

The following problem about the algebraic structure
of these groups is also open:
\begin{prob}\label{probOutNormal}
    When is $\Out_B(E)$ a normal subgroup of $\Sym_B(X/E)$?
\end{prob}

\section{Outer actions}
\label{sectionOuter}

A lift of an outer action
is a solution to the following lifting problem:
\[
    \begin{tikzcd}
        & \Aut_B(E) \dar[two heads, "p_E"] \\
        G \rar \urar[dashed] & \Out_B(E)
    \end{tikzcd}
\]

Many outer actions arise from the following construction:
\begin{eg}\label{outerExample}
    Given a Borel action $G \car X$ of a countable group $G$
    and a normal subgroup $N \tl G$,
    there is a morphism $G \to \Out_B(E_N^X)$ defined by
    \[
        g\cdot [x]_{E_N^X}
        = [g\cdot x]_{E_N^X},
    \]
    and this descends to a morphism $G/N \to \Out_B(E_N^X)$.
\end{eg}

\subsection{Normal subequivalence relations}

The concept of normality is central to the study of outer actions:
\begin{defn}
    Let $E\subseteq F$ be CBERs.
    We say that $E$ is \textbf{normal} in $F$,
    denoted $E\tl F$,
    if any of the following equivalent conditions hold:
    \begin{enumerate}[label=(\arabic*)]
        \item \label{actionNormal}
            There is an action $G \car_B (X,E)$
            of a countable group $G$ such that $F = E^{\vee G}$.
        \item \label{outerActionNormal}
            There is a morphism $G \to \Out_B(E)$
            from a countable group $G$ such that $F = E^{\vee G}$.
        \item \label{subgroupNormal}
            There is a countable subgroup $G \le \Aut_B(E)$
            such that $F = E^{\vee G}$.
        \item \label{outerSubgroupNormal}
            There is a countable subgroup $G \le \Out_B(E)$
            such that $F = E^{\vee G}$.
    \end{enumerate}
\end{defn}
To see the equivalence,
note that \ref{subgroupNormal} $\implies$
\ref{actionNormal} $\implies$ \ref{outerActionNormal} is immediate,
\ref{outerActionNormal} $\implies$ \ref{outerSubgroupNormal}
holds by taking the image of $G$ in $\Out_B(E)$,
and \ref{outerSubgroupNormal} $\implies$ \ref{subgroupNormal}
holds by fixing a lift $T_g\in\Aut_B(E)$ of each $g\in G$
and taking the subgroup of $\Aut_B(E)$ generated by the $T_g$.

For CBERs $E \subseteq F$,
it is possible that $E$ is not normal in $F$,
but that there is still a Borel action $G \car_B X/E$
such that $F = E^{\vee G}$,
as witnessed by the example
at the beginning of \Cref{outerNonOuter}.
For more discussion concerning the weaker notion,
see \Cref{embeddingsOfQuotients}.
\begin{prop}\label{normalFacts}
    Let $E\tl F$ be CBERs on $X$.
    \begin{enumerate}[label=(\arabic*)]
        \item \label{intermediateNormal}
            If $F'$ is a CBER with $E\subseteq F'\subseteq F$,
            then $E\tl F'$.
        \item \label{subsetNormal}
            For any $E$-invariant subset $Y\subseteq X$,
            we have $E\uhr Y\tl F\uhr Y$.
    \end{enumerate}
\end{prop}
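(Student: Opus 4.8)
The plan is to deduce both parts from two lemmas, each an easy consequence of condition \ref{subgroupNormal} of normality: \emph{(a)} if $E\tl F_n$ for every $n$, then $E\tl\bigvee_n F_n$; and \emph{(b)} if $A,B\subseteq X$ are $E$-invariant Borel sets, $\phi$ is a Borel isomorphism of $E\uhr A$ onto $E\uhr B$, and $F'$ is the equivalence relation generated by $E$ and the graph of $\phi$, then $E\tl F'$. Granting these, the idea for both parts is to write the larger relation as a countable join of relations of the kind occurring in (b).

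Lemma (a) is immediate: choose countable $G_n\le\Aut_B(E)$ with $E^{\vee G_n}=F_n$, let $G=\langle\bigcup_n G_n\rangle\le\Aut_B(E)$ (still countable), and note $E_G^X\subseteq\bigvee_n E_{G_n}^X\subseteq\bigvee_n F_n$ while each $F_n=E^{\vee G_n}\subseteq E^{\vee G}$, so $E^{\vee G}=\bigvee_n F_n$. For the main reduction, fix a countable $G\le\Aut_B(E)$ with $E^{\vee G}=F$. For part (1), given $E\subseteq F'\subseteq F$, set $D_g=\{x\in X:g\cdot x\mathrel{F'}x\}$ for $g\in G$. Because $E\subseteq F'$ and $g\in\Aut_B(E)$, one checks that $D_g$ is $E$-invariant and Borel, that $g(D_g)=D_{g^{-1}}$, and that $\phi_g:=g\uhr D_g$ is a Borel isomorphism of $E\uhr D_g$ onto $E\uhr D_{g^{-1}}$ with graph contained in $F'$. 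These partial automorphisms generate $F'$ over $E$: if $x\mathrel{F'}y$ then $x\mathrel F y$, so $g\cdot x\mathrel E y$ for some $g\in G$, and then $g\cdot x\mathrel E y\mathrel{F'}x$ forces $x\in D_g$, so $x$ and $y$ are joined by $\phi_g$ followed by an $E$-step. Hence $F'=\bigvee_{g\in G}F_g$, where $F_g$ is the equivalence relation generated by $E$ and $\phi_g$; now (b) gives $E\tl F_g$ and (a) gives $E\tl F'$. Part (2) is identical with $D_g:=Y\cap g^{-1}(Y)$, which is $E$-invariant and Borel inside $Y$: then $\phi_g=g\uhr D_g$ is a Borel isomorphism of $(E\uhr Y)\uhr D_g$ onto $(E\uhr Y)\uhr D_{g^{-1}}$ with graph in $F\uhr Y$, and if $y_1,y_2\in Y$ with $y_1\mathrel F y_2$ then $g\cdot y_1\mathrel E y_2$ for some $g$, whence $g\cdot y_1\in[y_2]_E\subseteq Y$ and so $y_1\in D_g$; thus $F\uhr Y$ is the countable join over $E\uhr Y$ of the relations generated by $E\uhr Y$ and the $\phi_g$, and (a), (b) apply.

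The work is in (b), where I would extend $\phi$ to a single $T\in\Aut_B(E)$ with $E^{\vee T}=F'$. Pass to the partial Borel permutation $\bar\phi$ of $X/E$ induced by $\phi$, with domain $A/E$ and image $B/E$; since $\phi$ restricts to a bijection of each $E$-class inside $A$ onto an $E$-class inside $B$, these have the same cardinality, and hence so do all $E$-classes lying in a single connected component of the graph of $\bar\phi$. Decompose $X/E$ into these components --- cycles, bi-infinite chains, finite chains, one-sided infinite chains, and isolated classes, a Borel decomposition since the graph has degree $\le2$ --- and build a total Borel permutation $\bar T$ of $X/E$ whose orbits coincide, as subsets of $X/E$, with these components: keep $\bar\phi$ on cycles and bi-infinite chains, close each finite chain up into a cycle, and re-enumerate each one-sided infinite chain, starting from its distinguished endpoint, as a copy of $\Z$ on which $\bar T$ acts as the shift. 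Using a Borel enumeration of the $E$-classes (Lusin--Novikov) together with the equal-cardinality observation to pick the required bijections of $E$-classes uniformly, $\bar T$ lifts to some $T\in\Aut_B(E)$. By construction $T(x)$ always lies in an $E$-class in the $\bar\phi$-component of $[x]_E$, so $E^{\vee T}\subseteq F'$; and the $\langle T\rangle$-orbit of $[x]_E$ in $X/E$ is exactly that component, which contains $\bar\phi([x]_E)$, so the graph of $\phi$ lies in $E^{\vee T}$ and hence $F'\subseteq E^{\vee T}$.

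The main obstacle is the one-sided infinite chains in (b): a partial automorphism of $E$ need not extend to a total one merely by modifying it off its domain (the $E$-invariant complements $X\setminus A$ and $X\setminus B$ need not be Borel isomorphic modulo $E$), so the chains must be re-routed, and this is possible precisely because the $E$-classes along each component of $\bar\phi$ all have the same cardinality. The remaining verifications --- $E$-invariance and Borelness of the $D_g$, the identity $g(D_g)=D_{g^{-1}}$, and the inclusions establishing $F'=\bigvee_g F_g$ --- are routine.
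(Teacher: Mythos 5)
Your overall strategy is sound, and most of it checks out: the reduction of both parts to lemmas (a) and (b), the sets $D_g=\{x: g\cdot x\mathrel{F'}x\}$ (resp.\ $Y\cap g^{-1}(Y)$), the identity $F'=\bigvee_g F_g$, and lemma (a) are all correct and routine. The one step that does not stand as written is the lifting of $\bar T$ to some $T\in\Aut_B(E)$, which you justify by ``equal cardinality of classes plus a Lusin--Novikov enumeration.'' As a general principle this is false, and the paper's own example at the start of \Cref{outerNonOuter} shows it: taking $A,B$ there to meet every $E_0$-class infinitely, the involution of $X/E$ swapping $[x]_{E_0}\cap A$ with $[x]_{E_0}\cap B$ sends each class to a class of the same (countably infinite) cardinality, yet it is not even outer, so it has no lift in $\Aut_B(E)$. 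The underlying issue is that Lusin--Novikov enumerations are anchored at points, not at classes; extracting from them class-to-class bijections uniformly would amount to a Borel selection of a distinguished enumeration of each class, i.e.\ essentially smoothness. Fortunately your construction never needs abstract bijections: every pair of classes you must connect lies in a single $\bar\phi$-component, so the compositions of $\phi$ and $\phi^{-1}$ along the component provide canonical Borel bijections. Close a finite chain $C_0\to\cdots\to C_{n-1}$ by $\phi^{-(n-1)}$ on $C_{n-1}$, and implement the $\Z$-re-enumeration of a one-sided chain by applying, on the class at distance $\beta(m)$ from the endpoint, the power $\phi^{\beta(m+1)-\beta(m)}$ (the distance-to-endpoint function is Borel). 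With that replacement lemma (b), and hence your whole argument, is correct.

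For comparison, the paper proves \ref{intermediateNormal} directly: it first reduces to $F=E^{\vee T}$ for a single $T\in\Aut_B(E)$ and defines $T'(x)$ as an explicit power $T^n(x)$, with $n$ determined by the order structure that iteration of $T$ induces on $[x]_{F'}/E$ (a first-return map when that preorder is $\Z$-like or not antisymmetric, and a fixed transitive $\Z$-action on the finite, $\N$-like, or reverse-$\N$-like order types otherwise), then combines over the elements of a generating group; part \ref{subsetNormal} is deduced from \ref{intermediateNormal} via $F'=(F\uhr Y)\oplus(F\uhr(X\setminus Y))$. The combinatorial content --- re-routing finite and one-sided chains into single transitive $\Z$-orbits --- is the same as yours, but by working throughout with point maps (powers of $T$ there, powers of $\phi$ in the repaired version of your argument) the paper never has to lift a permutation of the quotient, which is precisely where your write-up slips.
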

\begin{proof}
    Note that \ref{subsetNormal} follows immediately from \ref{intermediateNormal}
    by taking $F' = (F\uhr Y)\oplus (F\uhr (X\setminus Y))$,
    so it suffices to prove \ref{intermediateNormal}.
    
    We first assume that $F = E^{\vee T}$ for some $T \in \Aut_B(E)$.
    We will show that $F' = E^{\vee T'}$ for some $T'\in \Aut_B(E)$.
    
    For each $x\in X$,
    let $\le_x$ be the preorder on $[x]_{F'}/E$
    defined by $[y]_E \le_x [z]_E$
    iff there exists some $n \ge 0$ such that $T^n(y) \E z$.
    If $\le_x$ is isomorphic to $\Z$ or not antisymmetric,
    then set $T'(x) = T^n(x)$,
    where $n > 0$ is least such that $T^n(x) \F' x$.
    Otherwise,
    there is a unique isomorphism from $\le_x$
    to either the negative integers $(\{\cdots, -3, -2, -1\}, \le)$
    or to an initial segment of $(\N, \le)$.
    So by fixing a transitive $\Z$-action on each of these linear orders,
    we obtain a transitive $\Z$-action on $[x]_{F'}/E$,
    and we set $T'(x) = T^n(x)$,
    where $n$ is unique such that $T^n(x) \in 1\cdot [x]_E$.
    
    Now suppose that $F = E^{\vee G}$ for some $G \le \Aut_B(E)$.
    By above,
    for each $T \in G$,
    we can fix some $T' \in \Aut_B(E)$
    such that $E^{\vee T'} = F'\cap E^{\vee T}$.
    Then $F' = E^{\vee H}$,
    where $H = \ev{T'}_{T \in G}$.
\end{proof}
We next make some remarks about smooth links.
Let $E\tl F$ be CBERs.
Suppose that $E$ is aperiodic and $[F:E] = \infty$,
since the finite parts have smooth links via the forthcoming 
\Cref{finiteLink} and \Cref{smoothFacts}.
If $E$ is compressible,
then there is a smooth link by \Cref{compressibleLink}.
On the other hand,
if there is a smooth link $L$,
then $F$ must be compressible,
since it contains the aperiodic smooth $L$.

Thus the existence of a link does not imply
the existence of a smooth link.
For instance,
fix a free pmp Borel action $\Z^2 \car X$,
and consider $E = E_{\Z\times \{0\}}^X$ and $F = E_{\Z^2}^X$.
Then there is a link given by the action of $\{0\} \times \Z$,
but there is no smooth link,
since $F$ is not compressible.
If $X$ is the circle and the $\Z^2$-action is by
two linearly independent irrational rotations,
then $E$ and $F$ are both uniquely ergodic,
and by taking copies of these,
one can obtain an example with any number of ergodic measures.

If $E\tl F$ with $E$ finitely ergodic,
then $F$ is not compressible,
since if $\EINV_E = (e_i)_{i < n}$,
then $\frac{1}{n}(e_0 + \cdots + e_{n-1}) \in \EINV_F$.
Thus there is no smooth link.
If $\EINV_E$ is infinite,
it is still possible for a smooth link to exist.
For instance,
consider $E = E_0 \times \Delta_\N$
and $F = E_0 \times I_\N$.
In general,
the following is open:
\begin{prob}\label{probSmoothLink}
    Let $E\tl F$ be CBERs with $F$ is compressible.
    Is there a smooth $(E,F)$-link?
\end{prob}

Another open question, related to \Cref{compressibleLink}, is as follows:

\begin{prob}\label{probCompressibleExt}
    Let $E\tl F\tl F'$ be compressible CBERs.
    Can every $(E, F)$-link be extended to an $(E, F')$-link?
\end{prob}
If this were true,
then assuming the Continuum Hypothesis,
for any compressible CBER $E$,
the epimorphism $p_E \colon \Aut_B(E) \thra \Out_B(E)$ would split,
i.e.,
there would exist a morphism $s \colon \Out_B(E) \to \Aut_B(E)$
with $p_E \circ s$ equal to the identity.
To see this,
write $\Out_B(E)$ as an increasing union
$\bigcup_{\alpha < \omega_1} G_\alpha$ of countable subgroups.
It suffices to obtain class-bijective lifts
$G_\alpha \to \Aut_B(E)$ such that if $\alpha < \beta$,
then the $G_\beta$ lift extends the $G_\alpha$ lift.
For $\lambda$ limit,
take the union of the corresponding links for the $G_\alpha$ with $\alpha < \lambda$,
and for $\beta = \alpha + 1$ a successor,
use a positive answer to \Cref{probCompressibleExt}.

\subsection{Basic results}

\begin{prop}\label{smoothFacts}
    Let $E$ be a smooth CBER.
    \begin{enumerate}[label=(\arabic*)]
        \item \label{smoothLink}
            If $F$ is a CBER with $E\tl F$,
            then there is an $(E,F)$-link.
        \item \label{smoothCb}
            Every outer action on $X/E$ has a class-bijective lift.
    \end{enumerate}
\end{prop}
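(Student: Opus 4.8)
I would prove \ref{smoothLink} first and then deduce \ref{smoothCb} from it via \Cref{linkLift}.

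For \ref{smoothLink}, suppose $E$ is smooth and $E \tl F$. The idea is that smoothness gives a canonical Borel enumeration of each $E$-class whose range depends only on the size of that class, while normality forces all $E$-classes inside a fixed $F$-class to have the same size; these enumerations then glue into a link. I would first record two facts. (a) There is a Borel map $c \colon X \to \N$ such that for every $x$, the restriction $c \uhr [x]_E$ is a bijection of $[x]_E$ onto $\{0, 1, \dots, |[x]_E| - 1\}$ (interpreted as all of $\N$ when $[x]_E$ is infinite). To get this, fix a Borel transversal of $E$ with associated selector $\rho \colon X \to X$ and a countable group $\Gamma = \{\gamma_n : n \in \N\}$ with $E = E_\Gamma^X$; let $c_0(x)$ be the least $n$ with $\gamma_n \cdot \rho(x) = x$, which is Borel and injective on each $E$-class, and set $c(x) := |\{y \in [x]_E : c_0(y) < c_0(x)\}|$, a natural number depending on $x$ in a Borel way by the Lusin--Novikov theorem, with the asserted normalization. (Equivalently: a smooth CBER is Borel isomorphic to a direct sum $\bigoplus_N (\Delta_{T_N} \times I_N)$ ranging over class sizes $N \in \{1, 2, \dots\} \cup \{\aleph_0\}$, and $c$ is the second coordinate.) (b) Since $E \tl F$, all $E$-classes contained in a single $F$-class are equinumerous: writing $F = E^{\vee G}$ with $G \le \Aut_B(E)$ countable, each $g \in G$ carries each $E$-class bijectively onto another $E$-class, and the $F$-classes are precisely the $G$-orbits of $E$-classes.

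Given (a) and (b), I would set $L := \{(x,y) \in F : c(x) = c(y)\}$, which is plainly a CBER with $L \subseteq F$, and check that it is an $(E,F)$-link. Fix an $F$-class $C$; by (b) every $E$-class inside $C$ has some common cardinality $k \in \{1, 2, \dots\} \cup \{\aleph_0\}$, so by (a) each of them has $c$-image exactly $\{0, \dots, k-1\}$ (read as $\N$ if $k = \aleph_0$). Hence the $L \uhr C$-classes are exactly the level sets $C_i := \{y \in C : c(y) = i\}$ for $i < k$, and for an $E$-class $D \subseteq C$ and $i < k$, the set $D \cap C_i$ is the unique point of $D$ on which $c$ takes value $i$. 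So every $E \uhr C$-class meets every $L \uhr C$-class in exactly one point, which is the link condition.

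For \ref{smoothCb}, let $G \car_B X/E$ be an outer action and put $F := E^{\vee G} \supseteq E$. The action factors through a countable subgroup of $\Out_B(E)$, so $E \tl F$ by the definition of normality; by \ref{smoothLink} there is an $(E,F)$-link, and hence by \Cref{linkLift} the action has a class-bijective lift $G \car_B (X,E)$.

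The only step with genuine content is (a), and inside it the passage from $c_0$ to $c$: a raw Borel enumeration $c_0$ has an essentially arbitrary range that varies from one $E$-class to another, and its level sets need not meet every $E$-class sitting inside a given $F$-class; replacing $c_0$ by the rank function $c$ normalizes the range to $\{0,\dots,|[x]_E|-1\}$ and repairs this. Everything else is routine bookkeeping.
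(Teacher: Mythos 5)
Your proof is correct and follows essentially the same route as the paper: reduce \ref{smoothCb} to \ref{smoothLink} via \Cref{linkLift}, use normality to see that $E$-classes within an $F$-class are equinumerous, and link points carrying the same canonical index. Your rank function $c$ is just a global packaging of the paper's partition of $X$ into Borel transversals $S_k$ (after restricting to pieces of constant class size), so the two arguments differ only cosmetically.
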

\begin{proof}
    By \Cref{linkLift},
    it suffices to show \ref{smoothLink}.
    
    By normality,
    any two $E$-classes contained in the same $F$-class have the same cardinality,
    so by partitioning the space into $F$-invariant Borel sets,
    we can assume that there is some
    $n \in \{1, 2, \cdots, \N\}$
    such that every $E$-class has cardinality $n$.
    Then there is a partition $X = \bigsqcup_{k < n} S_k$
    such that each $S_k$ is a transversal for $E$.
    Thus the CBER $L$ defined by
    \[
        x \mr L y\iff (x \F y) \amp
        (\exists k < n \, [x, y\in S_k])
    \]
    is an $(E,F)$-link.
\end{proof}

It is clear that if $G$ is a free group,
then every outer action of $G$ has a lift.
There are also some basic closure properties for the class of groups
for which every outer action admits a (class-bijective) lift.
\begin{prop}\label{subgroupClosed}
    Let $H\le G$.
    If every outer action of $G$ has a (class-bijective) lift,
    then the same holds for $H$.
\end{prop}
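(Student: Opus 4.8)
The plan is to prove this via the standard \emph{induced action} construction, which reduces an outer action of $H$ to one of $G$. Suppose $H \le G$ and every outer action of $G$ has a (class-bijective) lift, and let $H \car_B X/E$ be an outer action, corresponding to a morphism $\psi \colon H \to \Out_B(E)$. Fix a transversal $T \subseteq G$ for the left cosets $G/H$ with $e \in T$; for $g \in G$ and $t \in T$, let $\sigma(g,t) \in T$ and $\eta(g,t) \in H$ be the unique elements with $g t = \sigma(g,t)\,\eta(g,t)$. Put $\wt X = T \times X$ and $\wt E = \Delta_T \times E$, a CBER on the standard Borel space $\wt X$, with $\wt X/\wt E$ naturally identified with $T \times (X/E)$.

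First I would define the induced Borel action $G \car_B \wt X/\wt E$ by
\[
    g \cdot (t, [x]_E) = \big(\sigma(g,t),\ \psi(\eta(g,t)) \cdot [x]_E\big).
\]
A routine verification, using uniqueness of coset representatives and that $\psi$ is a homomorphism, shows this is an action by Borel permutations. Next I would check it is outer: fixing a lift $S_h \in \Aut_B(E)$ of $\psi(h)$ for each $h \in H$, the map $\Phi_g(t,x) := (\sigma(g,t), S_{\eta(g,t)}(x))$ lies in $\Aut_B(\wt E)$ — it respects $\wt E$ since each $S_h$ respects $E$ and $t \mapsto \sigma(g,t)$ is a permutation of $T$ — and it induces the permutation $g$ on $\wt X/\wt E$. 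Hence $G \car_B \wt X/\wt E$ is an outer action, so by hypothesis it admits a lift $\Theta \colon G \to \Aut_B(\wt E)$, with $\Theta(g)$ inducing $g$'s permutation of $\wt X/\wt E$ for every $g$.

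The key point is that the $\wt E$-invariant Borel set $\{e\} \times X$ is preserved by $\Theta(h)$ for every $h \in H$. Indeed $\sigma(h,e) = e$ since $h e = e h$, so $\Theta(h)$ induces on $\wt X/\wt E$ a permutation carrying $\{e\} \times (X/E)$ to itself; as $\Theta(h) \in \Aut_B(\wt E)$ sends each $\wt E$-class to the $\wt E$-class lying over the image of its class under that permutation, $\Theta(h)$ maps $\{e\} \times X$ into itself, and applying the same to $h^{-1}$ shows $\Theta(h)$ restricts to a bijection $R_h$ of $\{e\} \times X$. Identifying $\{e\} \times X$ with $X$ and $\wt E \uhr (\{e\} \times X)$ with $E$, the assignment $h \mapsto R_h$ is a morphism $H \to \Aut_B(E)$ (a homomorphism restricted to a common invariant set), and by construction $R_h$ induces $[x]_E \mapsto \psi(h) \cdot [x]_E$ on $X/E$; thus $h \mapsto R_h$ is a lift of $H \car_B X/E$.

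Finally, if the hypothesis yields a class-bijective $\Theta$, then $h \mapsto R_h$ is class-bijective too: $R_h(x) \E x$ gives $\Theta(h)(e,x) = (e, R_h(x)) \mathrel{\wt E} (e,x)$, whence $\Theta(h)(e,x) = (e,x)$ and so $R_h(x) = x$. The argument is routine throughout; the one place needing care is verifying that $\Theta(h)$ preserves the actual slice $\{e\} \times X$ and not merely its image in the quotient, which is exactly where one uses that $\Theta(h)$ is an automorphism of $\wt E$ rather than just a permutation of $\wt X/\wt E$.
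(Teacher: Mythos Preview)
Your proof is correct and is essentially the same as the paper's: you construct $\wt E = \Delta_T \times E \cong \bigoplus_{G/H} E$, extend the outer $H$-action to an outer $G$-action via the induced-action formula, apply the hypothesis, and restrict the resulting lift to the identity-coset copy of $E$. The paper's proof is simply a one-line summary of this construction, while you have supplied the routine verifications (that the induced action is outer, that $\Theta(h)$ preserves the slice $\{e\}\times X$, and that class-bijectivity descends).
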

\begin{proof}
    Let $E$ be a CBER,
    and fix a morphism $H\to \Out_B(E)$.
    Let $F = \bigoplus_{G/H} E$.
    Then there is a morphism $G\to\Out_B(F)$,
    induced by the action of $G$ on $G/H$,
    so we get a lift $G\to \Aut_B(F)$.
    Restricting to $H$ and $E$ gives the desired lift.
\end{proof}

\begin{prop}\label{quotientClosed}
    Let $G\thra H$ be an epimorphism.
    If every outer action of $G$ has a class-bijective lift,
    then the same holds for $H$.
\end{prop}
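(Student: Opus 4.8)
The plan is to pull an outer action of $H$ back along the epimorphism $\pi\colon G\thra H$, lift it class-bijectively using the hypothesis on $G$, and then check that the resulting lift automatically factors through $H$. So fix a CBER $E$ on $X$ together with a morphism $\phi\colon H\to\Out_B(E)$, i.e.\ an outer action $H\car_B X/E$. Composing with $\pi$ gives an outer action $\phi\circ\pi\colon G\to\Out_B(E)$; by assumption there is a morphism $\psi\colon G\to\Aut_B(E)$ with $p_E\circ\psi=\phi\circ\pi$ and with the induced action $G\car_B(X,E)$ class-bijective.

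The key step is to show that $N:=\ker\pi$ acts trivially on $X$ via $\psi$. For $g\in N$ we have $p_E(\psi(g))=\phi(\pi(g))=1$, so $\psi(g)\in\Inn_B(E)$, meaning $\psi(g)\cdot x\E x$ for every $x\in X$. But class-bijectivity of the $G$-action says $g\cdot x\E x\implies g\cdot x=x$, so $\psi(g)=\id$. Hence $N\subseteq\ker\psi$, so $\psi$ descends to a morphism $\bar\psi\colon H\to\Aut_B(E)$ with $\bar\psi\circ\pi=\psi$. Since $N$ acts trivially on $X$, the $H$-orbits coincide with the $G$-orbits, so $H\car_B(X,E)$ is still class-bijective; and from $p_E\circ\bar\psi\circ\pi=p_E\circ\psi=\phi\circ\pi$ together with surjectivity of $\pi$ we obtain $p_E\circ\bar\psi=\phi$. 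Thus $\bar\psi$ is a class-bijective lift of $\phi$, as desired.

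I do not expect a genuine obstacle: the whole argument is diagram-chasing once one notices that being simultaneously inner and class-bijective forces an automorphism to be the identity, which is exactly what pushes $\ker\pi$ into the kernel of the lifted action. It is worth remarking that this is precisely where the strengthening from ``has a lift'' to ``has a class-bijective lift'' is used: with only a lift of $\phi\circ\pi$, the elements of $\ker\pi$ would merely act by inner automorphisms, which need not be trivial, so the lift would not obviously descend to $H$.
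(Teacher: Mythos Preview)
Your proof is correct. Both you and the paper begin by precomposing $\phi$ with $\pi$ to obtain an outer $G$-action and then invoke the hypothesis to get a class-bijective lift $\psi\colon G\to\Aut_B(E)$; the difference lies in how the class-bijective $H$-lift is extracted from $\psi$.

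The paper observes that $E^{\vee G}=E^{\vee H}$ (since the two actions on $X/E$ coincide by surjectivity of $\pi$) and then appeals to \Cref{linkLift}: the $G$-lift produces an $(E,E^{\vee G})$-link, which is automatically an $(E,E^{\vee H})$-link, and \Cref{linkLift} converts this back into a class-bijective $H$-lift. Your argument instead works directly at the level of the homomorphism $\psi$, showing that $\ker\pi$ lands in $\ker\psi$ because an inner automorphism coming from a class-bijective action must be the identity, so $\psi$ descends to $H$. Your route is a little more self-contained (no need to invoke the link equivalence), and it makes explicit the mechanism---class-bijectivity collapsing inner automorphisms to the identity---that is doing the work; the paper's route is terser once \Cref{linkLift} is in hand. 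In fact the two constructions produce the same $H$-action on $X$: the link $L=E_G^X$ fed into \Cref{linkLift} recovers exactly your descended action $\bar\psi$.
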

\begin{proof}
    Fix a morphism $H\to \Out_B(E)$.
    This gives a morphism $G \to \Out_B(E)$.
    Since by surjectivity $E^{\vee G} = E^{\vee H}$,
    we are done by \Cref{linkLift}.   
\end{proof}

At this point,
it is good to show that not every outer action has a lift.
\begin{defn}
    A countable group $G$ is \textbf{treeable}
    if it admits a free pmp Borel action
    whose induced equivalence relation is treeable.
\end{defn}
\begin{eg}
    There are many examples of groups which are not treeable
    (see \cite[30]{KM04}, \cite[10.8]{Kec22}):
    \begin{itemize}
        \item Infinite property (T) groups.
        \item $G\times H$, where $G$ is infinite and $H$ is non-amenable.
        \item More generally,
            lattices in products of locally compact Polish groups $G\times H$,
            where $G$ is non-compact and $H$ is non-amenable.
    \end{itemize}
\end{eg}

The proof of the next result is motivated by \cite[Theorem 5]{CJ85}
and the remark following the proof of \cite[Theorem 3.4]{FSZ89}.
\begin{prop}\label{liftTreeable}
    Suppose that every outer action of $G$ lifts.
    Then $G$ is treeable.
\end{prop}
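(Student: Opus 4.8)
The plan is to construct, from the hypothesis, a free pmp action of $G$ whose orbit equivalence relation is treeable, by exhibiting $G$ as an outer action over a treeable CBER and transporting the tree structure along a class-bijective... no — we only get a lift, not a class-bijective one, so instead the idea is to build a treeable $F$ and an outer action $G\car_B X/F$ whose lift $G\car_B(X,F)$ is \emph{free and pmp}, and then read off treeability of $G$ from the structure of $F$ together with the lift. Concretely, following \cite{CJ85} and \cite{FSZ89}: start with a free pmp Borel action of the free group $\mathbb F_n$ (or of $\mathbb F_\infty$) on a standard probability space $(Y,\nu)$ whose orbit equivalence relation $F_0$ is treeable — indeed, induced by the acyclic Cayley graph. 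Present $G$ as a quotient $q\colon \mathbb F_n \thra G$ with kernel $R\tl \mathbb F_n$. The normal subgroup $R$ acts on $Y$, generating a subequivalence relation $E:=E_R^Y\tl F_0=E_{\mathbb F_n}^Y$, and by \Cref{outerExample} the quotient group $\mathbb F_n/R\cong G$ acts by outer automorphisms on $Y/E$, with $E^{\vee G}=F_0$.

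By hypothesis this outer action of $G$ lifts: there is an action $G\car_B(Y,E)$, i.e. a morphism $G\to\Aut_B(E)$, inducing on $Y/E$ the given outer action, so that $E^{\vee G}=F_0$ for this new action as well. The key point is that $F_0$ is treeable and $E\subseteq F_0$. Now I would invoke the fact (from \cite{FSZ89}, and this is exactly the mechanism behind the Connes–Jones-style argument) that a subequivalence relation of a treeable CBER that is the "$E$-part" of a lifted outer action lets one push down the treeing: more precisely, since $F_0=E^{\vee G}$ with $G$ acting by Borel automorphisms of $E$, and $F_0$ is treeable, one can choose a Borel treeing $\mathcal T$ of $F_0$, and then the action of $G$ together with $\mathcal T$ restricted to the $E$-classes gives a Borel way of producing, on a suitable free pmp action of $G$, a treeing — the lifted automorphisms $T_g$ ($g\in G$) move $E$-classes around inside $F_0$-classes along $\mathcal T$, and the resulting "skeleton" of how the $E$-classes sit inside the tree is itself a tree on which $G$ acts freely. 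Passing to the space $Z$ of $E$-classes inside a fixed $F_0$-class (which carries the pmp structure inherited from $\nu$ via the quotient, using that $E$ is finite-index... here one must be careful: $R$ may be infinite index, so $Z$ is the countable set $[y]_{F_0}/E\cong G$), the $G$-action on $Z$ is free (it is the left translation action of $G\cong\mathbb F_n/R$ on itself) and the induced graph from $\mathcal T$ is acyclic, because $\mathcal T$ is. Thus $E_G^Z$ is treeable; after integrating over the $F_0$-classes one gets a free pmp action of $G$ with treeable orbit equivalence relation, i.e. $G$ is treeable.

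The main obstacle I expect is making the last step rigorous: the treeing $\mathcal T$ of $F_0$ lives on $Y$, not on the quotient $Y/E$, and collapsing $E$-classes to points destroys acyclicity in general (quotients of trees by equivalence relations are not trees). The fix, as in \cite{CJ85, FSZ89}, is \emph{not} to collapse but to use the lift: the lifted maps $T_g\in\Aut_B(E)$ give, for each $g$, a Borel bijection of $Y$ carrying $E$ to $E$ and inducing translation by $g$ on each $F_0$-class-mod-$E$; one then defines, on the free pmp $G$-space obtained by an ergodic-decomposition / Rokhlin-type argument, the graph whose edges are $\{(z, s\cdot z): s\in S\}$ for $S$ the free generating set of $\mathbb F_n$ pushed through, and checks acyclicity by lifting any putative cycle back up to a reduced word in $\mathbb F_n$ that would have to be trivial — which forces it into $R$, hence the cycle is already trivial in $G$. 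Care is needed to ensure the resulting action is genuinely free (use that $\mathbb F_n/R=G$ acts freely on itself) and pmp (inherit $\nu$; since we only need \emph{some} free pmp action with treeable relation, and freeness may fail on a null set, restrict to a conull invariant Borel set where it holds), and to handle the case where no single $n$ works by taking $n=\infty$, so that every countable $G$ is a quotient of $\mathbb F_\infty$.
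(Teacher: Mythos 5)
Your setup coincides with the paper's: write $G=F_\infty/N$, fix a free pmp action $F_\infty\car_B(Y,\nu)$, form the induced free outer action $G\to\Out_B(E_N^Y)$ with $(E_N^Y)^{\vee G}=E_{F_\infty}^Y$, and lift it using the hypothesis. The gap is in how you then extract treeability of $G$. The ``skeleton'' you propose --- the graph on $[y]_{E_{F_\infty}^Y}/E_N^Y\cong G$ obtained by contracting the $E_N^Y$-classes of a treeing, or equivalently the graph with edges $\{(z,s\cdot z)\}$ for $s$ ranging over the images of the free generators --- is just the Cayley/Schreier graph of $G$, and that graph is acyclic only when $G$ is itself free. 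Your acyclicity check is exactly where this fails: a closed path lifts to a reduced word $w\in F_\infty$ lying in $N$; such a $w$ need not be trivial, and although its image in $G$ is trivial, the corresponding path has positive length, i.e., it is a genuine cycle (for $G=\Z^2$ the commutator of the two generators already gives a $4$-cycle). Moreover, ``passing to the space of $E$-classes'' and ``integrating over the $F_0$-classes'' is not available as stated: $Y/E_N^Y$ is not a standard Borel space when $E_N^Y$ is non-smooth, and a single $F_0$-class modulo $E_N^Y$ is a countable $G$-set, which carries no invariant probability measure when $G$ is infinite.

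The missing idea is that no transport of the treeing to a quotient is needed: the lift itself is already the desired action, on $Y$. Each lifted automorphism $T_g$ satisfies $[T_g(y)]_{E_N^Y}=g\cdot[y]_{E_N^Y}\subseteq[y]_{E_{F_\infty}^Y}$, so $T_g$ belongs to the full group $\Inn_B(E_{F_\infty}^Y)$ and hence preserves $\nu$; the lifted action is free because the outer action is free (if $T_g(y)=y$ then $g\cdot[y]_{E_N^Y}=[y]_{E_N^Y}$, which by freeness of the $F_\infty$-action forces $g=1$); and $E_G^Y\subseteq E_{F_\infty}^Y$. Since Borel subequivalence relations of treeable CBERs are treeable (the Jackson--Kechris--Louveau theorem, which your argument never invokes), $E_G^Y$ is treeable, and this free pmp action witnesses that $G$ is treeable. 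This is precisely how the paper concludes, so your proof is repaired by replacing everything after the lifting step with this observation.
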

\begin{proof}
    We can assume that $G = F_\infty/N$ for some $N \tl F_\infty$,
    where $F_\infty$ is the free group on infinitely many generators.
    Fix a free pmp Borel action $F_\infty \car_B (X, \mu)$
    (for instance,
    the Bernoulli shift on $2^{F_\infty}$),
    and consider the induced free outer action $G \to \Out_B(E_N^X)$
    (see \Cref{outerExample}).
    By assumption,
    there is a lift $G \to \Aut_B(E_N^X)$,
    which is also a free action.
    Then $E_G^X$ is treeable and preserves $\mu$,
    since $E_{F_\infty}^X$ satisfies these properties
    and contains $E_G^X$.
\end{proof}

Note that we have no control over the treeable CBER in
the proof of \Cref{liftTreeable}.
In particular,
the following is open:
\begin{prob}\label{probE0Lift}
    Does every outer action on $X/E_0$ lift?
\end{prob}

\section{Outer actions of finite groups}

The following is a strengthening of \cite[Proposition 7.1]{Tse13}:
\begin{thm}\label{finiteLink}
    Let $E\tl F$ be a finite index extension of CBERs.
    Then there is an $(E,F)$-link.
\end{thm}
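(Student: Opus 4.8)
The plan is to reduce to the case where the index is constant, say $[F:E] = N$ for some $N \in \N$, by partitioning $X$ into the $F$-invariant Borel sets $X_N = \{x : |[x]_F/E| = N\}$ and working on each piece separately; a link on each piece assembles to a link on all of $X$. On $X_N$, I want to build a CBER $L \subseteq F$ with $[L:{\cdot}] $ also equal to $N$ in the dual sense, namely every $E$-class and every $L$-class inside a given $F$-class meet in exactly one point. Equivalently, inside each $F$-class $C$ I need a bijection between $C$ and (an $E$-class of $C$) $\times$ (an $L$-class of $C$), i.e.\ a "grid structure" on $C$ compatible with $E$ in one coordinate.

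**Key idea: a marker/coloring argument on the intersection graph of choice structures.** Since $E \tl F$, for each $x$ the quotient $[x]_F/E$ is a finite set of size $N$, and normality means the $F$-action permutes the $E$-classes; a choice sequence $(f_i)_{i<N}$ as in the proof of \Cref{compressibleLink} picks one point from each $E$-subclass. The difficulty here is that we are \emph{not} assuming compressibility, so we cannot freely amplify and throw away pieces. Instead I would mimic the Miller--Rosendal / Kechris--Miller style argument: consider the standard Borel space of "partial transversal patterns" — finite sets $S$ that are partial transversals for $E$ inside a single $F$-class — fix a Borel $\N$-coloring $c$ of the intersection graph on $[E]^{<\infty}$ (available as recalled in the Preliminaries), and use it to select, in a Borel way and uniformly across $F$-classes, a coherent system of $N$ disjoint transversals $S_0, \ldots, S_{N-1}$ for $E$ whose union is all of $X_N$. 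Then set $x \mr L y \iff (x \F y) \amp \exists k\,(x,y \in S_k)$, exactly as in \Cref{smoothFacts}\ref{smoothLink}. The point of the coloring is to break ties when selecting which point of each $E$-class goes into which $S_k$, so that the selection is $F$-invariant-enough to make the $S_k$ genuine transversals.

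**Handling the non-smooth obstruction.** The reason we cannot just declare "$S_k = $ the $k$-th transversal" is that $E$ need not be smooth, so transversals need not exist. The resolution is that we do not need a transversal for $E$ globally — we need, for each $F$-class $C$, a partition of $C$ into $N$ pieces each of which is a transversal for $E \uhr C$. This is a \emph{local} (per-$F$-class) smoothness requirement, and it is automatically satisfiable class-by-class because each $[x]_F/E$ is finite; the work is purely in making the choice Borel. I would do this by a maximality argument: let $\Phi$ be the Borel set of finite $S \subseteq X$ that are transversals for $E \uhr [S]_F$ restricted to the $F$-classes they already touch — more precisely, build by transfinite/countable recursion along the coloring $c$ a $\Phi$-maximal fsr whose classes are "rows" of the desired grid, using \cite[Lemma 7.3]{KM04} as quoted. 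Maximality plus finiteness of the index forces every $F$-class to be completely covered, giving the grid and hence $L$.

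**Main obstacle.** The genuinely delicate step is the Borel selection of the $N$ disjoint transversals with $\bigcup_k S_k = X_N$: naive greedy selection along an enumeration of a generating group $\Gamma$ for $F$ can leave "leftover" points not assigned to any $S_k$ when $E$-classes within an $F$-class have been visited in an incompatible order. I expect to spend most of the effort showing that the coloring-based tie-breaking makes the assignment both total and injective-per-row; this is where the strengthening over \cite[Proposition 7.1]{Tse13} (which presumably only handled a special case, e.g.\ $E$ smooth or $F/E$ with extra structure) really lies. Everything else — the reduction to constant index, the passage from a grid to a link via the formula above, and reassembly over the pieces $X_N$ — is routine.
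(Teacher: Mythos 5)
Your scaffolding (the set $\Phi$ of finite per-$F$-class transversals of $E$ and a $\Phi$-maximal fsr $R$ via \cite[Lemma 7.3]{KM04}) matches the opening of the paper's proof, but the step you defer is exactly where the argument breaks. A small slip first: you ask for $N$ disjoint transversals with $N = [F:E]$, but a partition of an $F$-class $C$ into transversals of $E\uhr C$ must have as many pieces as the cardinality of each $E$-class (typically $\aleph_0$); the index $N$ is the size of each $L$-class, not the number of rows. More seriously, the claim that ``maximality plus finiteness of the index forces every $F$-class to be completely covered'' is false: $\Phi$-maximality only says that no finite transversal lies entirely outside $\dom(R)$, and what this actually yields (picking one point outside $\dom(R)$ from each of the finitely many $E$-classes, for a contradiction) is that every $F$-class contains at least one \emph{whole $E$-class} inside $\dom(R)$ --- far from full coverage. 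So leftover points remain, and your plan to absorb them by coloring-based tie-breaking never uses normality in a substantive way (you invoke it only for equal cardinalities of classes). It cannot work without it: take $E = (E_0\uhr A)\oplus(E_0\uhr B)\subseteq E_0$ with $A,B$ complete sections of $E_0$ and $\mu(A)\neq\mu(B)$, the paper's own example from \Cref{outerNonOuter}. Here the index is $2$ and all your per-class grid conditions hold, yet there is no $(E,E_0)$-link, since each $L$-class would pair the $A$-point with the $B$-point of its $E_0$-class, giving an $E_0$-equidecomposition of $A$ with $B$ and hence $\mu(A)=\mu(B)$. Thus the Borel selection you need is obstructed by an invariant-measure phenomenon that no intersection-graph coloring can break; normality is precisely what removes it, and your sketch does not say where it enters.

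The missing idea, which is how the paper completes the proof, is not to cover each $F$-class by rows at all. After taking the $\Phi$-maximal fsr $R$, let $Y$ be the union of those $E$-classes entirely contained in $\dom(R)$ (nonempty in every $F$-class by the maximality argument above), so that $R\uhr Y$ already links $E\uhr Y$ to $F\uhr Y$. Now use normality to fix a countable subgroup $G\le\Aut_B(E)$ with $F = E^{\vee G}$, and for each $x\notin Y$ let $g_x$ be the least element of $G$ with $g_x\cdot x\in Y$. Since $G\le\Aut_B(E)$ and $Y$ is $E$-invariant, $g_x$ is constant on $E$-classes, so each leftover $E$-class is glued bijectively onto an $E$-class inside $Y$; the equivalence relation generated by $R\uhr Y$ and the pairs $(x,g_x\cdot x)$ then meets every $E$-class in each $F$-class exactly once, i.e.\ is a link. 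If you revise, replace the attempt to make the maximal fsr total by this ``funnel everything into one fully covered $E$-class per $F$-class using the group action'' step; that is where $E\tl F$ (and not merely $E\subseteq F$ of finite index) is genuinely used.
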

\begin{proof}
    Let $\Phi$ be the set of elements of $[F]^{<\infty}$
    which are a transversal for $E\uhr C$ for some $F$-class $C$.
    By \cite[Lemma 7.3]{KM04},
    there is a $\Phi$-maximal fsr $R$.
    Let $Y = (\dom(R))_E$ be the $E$-hull of $\dom(R)$.
    
    Let $G\le \Aut_B(E)$ be a countable subgroup such that $F = E^{\vee G}$.
    For every $x\in X\setminus Y$,
    let $g_x\in G$ be least (in some enumeration of $G$)
    such that $g_x\cdot x\in Y$;
    this exists by $\Phi$-maximality of $R$.
    Then the equivalence relation generated by $R\uhr Y$
    and $\{(x,g_x\cdot x):x\in X\setminus Y\}$ is an $(E,F)$-link.
\end{proof}

\begin{cor}\label{finiteCb}
    Every outer action of a finite group has a class-bijective lift.
\end{cor}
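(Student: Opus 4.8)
The plan is to deduce this directly from \Cref{finiteLink} together with \Cref{linkLift}. Fix a finite group $G$ and an outer action $G \car_B X/E$. By \Cref{linkLift}, a class-bijective lift exists if and only if there is an $(E, E^{\vee G})$-link, so it suffices to produce such a link, and for this I would invoke \Cref{finiteLink} applied to the pair $E \subseteq E^{\vee G}$. To be entitled to do so, I need to check two things: that $E \tl E^{\vee G}$, and that this extension has finite index.

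Normality is immediate from outerness: since the action is outer, the morphism $G \to \Sym_B(X/E)$ factors as $i_E$ composed with a morphism $\varphi \colon G \to \Out_B(E)$, and by construction $E^{\vee G}$ is precisely the CBER associated to this outer action, so condition \ref{outerActionNormal} in the definition of normality is witnessed by $\varphi$ (or by the image $\varphi(G) \le \Out_B(E)$). For the index, note that each $E^{\vee G}$-class is $[x]_{E^{\vee G}} = \bigcup_{g \in G} g \cdot [x]_E$, a union of at most $|G|$ many $E$-classes; hence $E^{\vee G}$ has bounded (in particular finite) index at most $|G|$ over $E$.

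Given these two points, \Cref{finiteLink} produces an $(E, E^{\vee G})$-link, and \Cref{linkLift} converts it into a class-bijective lift $G \car_B (X,E)$, completing the proof. There is no genuine obstacle in this argument: the finiteness of $G$ enters only through the trivial index bound, so essentially all of the real work has already been carried out in \Cref{finiteLink}.
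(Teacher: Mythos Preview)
Your proof is correct and is exactly the paper's approach: the paper's proof is the single line ``Follows from \Cref{linkLift} and \Cref{finiteLink},'' and you have simply spelled out the hypothesis checks (normality and finite index of $E^{\vee G}$ over $E$) that make those two results applicable.
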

\begin{proof}
    Follows from \Cref{linkLift} and \Cref{finiteLink}.
\end{proof}

The following is a special case of \Cref{amenableCb},
whose proof is much harder.
\begin{cor}\label{ZCb}
    Every outer action of $\Z$ has a class-bijective lift.
\end{cor}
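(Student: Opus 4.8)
The plan is to invoke \Cref{linkLift}. An outer action $\Z\car_B X/E$ is generated by a single outer permutation $[T]$ for some $T\in\Aut_B(E)$, and writing $F=E^{\vee\Z}$ we have $F=E^{\vee T}$, since $x\mathrel{F}y$ iff $T^n(x)\mathrel{E}y$ for some $n\in\Z$. So by \Cref{linkLift} it suffices to produce an $(E,F)$-link. (Note that producing a mere lift is automatic here: $1\mapsto T$ is already a lift $\Z\to\Aut_B(E)$, since $\Z$ is free on one generator; the content of the statement is class-bijectivity.)

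I would then split $X$ according to the behaviour of the $\Z$-action $[y]_E\mapsto[T(y)]_E$ on $[x]_F/E$, which depends only on $[x]_F$. Let $X_\infty=\{x\in X:(T^n(x),x)\notin E\text{ for all }n\neq 0\}$, on which this action is free, so that $[x]_F/E$ is a free transitive $\Z$-set; and let $X_{\mathrm{fin}}=X\setminus X_\infty$, on which every $F$-class contains only finitely many $E$-classes. Both sets are Borel and $F$-invariant, and the direct sum of an $(E,F)$-link on each piece is an $(E,F)$-link on $X$, so it suffices to handle the two pieces separately.

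On $X_\infty$, the orbit equivalence relation $E_{\ev{T}}^{X}\uhr X_\infty$ is already a link: for each $F$-class $C\subseteq X_\infty$, the set $C/E$ is a free transitive $\Z$-set under the shift induced by $T$, so every $\ev{T}$-orbit in $C$ meets every $E\uhr C$-class exactly once (this link corresponds, via \Cref{linkLift}, to the class-bijective lift $1\mapsto T\uhr X_\infty$). On $X_{\mathrm{fin}}$ we have $E\uhr X_{\mathrm{fin}}\tl F\uhr X_{\mathrm{fin}}$ by \Cref{normalFacts}, and this is a finite-index extension, so \Cref{finiteLink} directly supplies an $(E,F)$-link. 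Combining the two links and applying \Cref{linkLift} gives the desired class-bijective lift. The only nonroutine ingredient is \Cref{finiteLink} for the finite part; the infinite part is immediate, since there $[T]$ has ``infinite order on each $F$-class'' and so any lift of it --- in particular $T$ --- is automatically class-bijective.
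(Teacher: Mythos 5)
Your proof is correct and follows essentially the same route as the paper: split $X/E$ into the finite and infinite $\Z$-orbit parts, handle the infinite part by the obvious (automatically class-bijective) lift of $T$, and handle the finite part via the finite-index machinery. The only cosmetic difference is that you invoke \Cref{finiteLink} and \Cref{linkLift} directly on the finite part, which is just unwinding the paper's appeal to \Cref{finiteCb}.
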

\begin{proof}
    On the finite $\Z$-orbits,
    apply \Cref{finiteCb}.
    On the infinite $\Z$-orbits of $X/E$,
    just lift uniquely.
\end{proof}

We next introduce lifts of morphisms:
\begin{defn}
    Let $H\to G$ be a morphism of countable groups.
    Then $H\to G$ has the \textbf{class-bijective lifting property}
    if for any CBER $E$ and any diagram of the form
    \[
        \begin{tikzcd}
            H \dar \rar & \Aut_B(E) \dar[two heads, "p_E"] \\
            G \rar & \Out_B(E)
        \end{tikzcd}
    \]
    with $H\to \Aut_B(E)$ class-bijective,
    there is a class-bijective lift $G\to \Aut_B(E)$.
\end{defn}

\begin{prop}\label{amalgamLift}
    Let $H$ be a countable group,
    let $(G_n)_n$ be a countable family of countable groups,
    let $H\to G_n$ be morphisms,
    and let $G$ be the amalgamated free product of the $G_n$ over $H$.
    If every outer action of $H$ has a class-bijective lift,
    and each $H \to G_n$ has the class-bijective lifting property,
    then every outer action of $G$ lifts.
\end{prop}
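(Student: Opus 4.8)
The plan is to mimic the standard argument that outer actions of amalgamated free products are assembled from outer actions of the vertex groups, amalgamated along the edge group $H$, but carried out at the level of class-bijective lifts using the link technology of \Cref{linkLift}. Fix a CBER $E$ and a morphism $G \to \Out_B(E)$. Restricting along $H \to G_n \to G$ gives morphisms $H \to \Out_B(E)$ and $G_n \to \Out_B(E)$, all inducing the \emph{same} CBER on the $H$-part since the maps $H \to G_n$ and $G_n \to G$ are (by the universal property of the amalgam) compatible; write $E_H := E^{\vee H}$ and $E_n := E^{\vee G_n}$, so $E \subseteq E_H \subseteq E_n \subseteq E^{\vee G}$.

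First I would use the hypothesis on $H$ to fix a class-bijective lift $H \to \Aut_B(E)$; by \Cref{linkLift} this amounts to choosing an $(E, E_H)$-link $L_H$. Next, for each $n$, I would feed the diagram
\[
    \begin{tikzcd}
        H \dar \rar & \Aut_B(E) \dar[two heads, "p_E"] \\
        G_n \rar & \Out_B(E)
    \end{tikzcd}
\]
(with the top map the class-bijective lift just constructed) into the class-bijective lifting property of $H \to G_n$, obtaining a class-bijective lift $G_n \to \Aut_B(E)$ \emph{extending} the chosen $H$-lift; equivalently, an $(E, E_n)$-link $L_n$ with $L_n \cap E_H = L_H$ (here I am using that a class-bijective $G_n$-action restricts to the prescribed $H$-action, so the induced links are compatible). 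Now I would define $L := \bigvee_n L_n$, the join of all the $L_n$ inside $E^{\vee G}$. The key claim is that $L$ is an $(E, E^{\vee G})$-link: within each $E^{\vee G}$-class $C$, the graph whose vertices are the $E\uhr C$-classes and whose edges come from the various $L_n\uhr C$ is, by the amalgam structure and the fact that each $L_n$ is a link agreeing with $L_H$ over $H$, a tree (this is the Bass--Serre-type combinatorics: the relations $L_n$ glue along the common $L_H$ and introduce no cycles precisely because $G$ is the \emph{free} product amalgamated over $H$), so $L$ meets each $E\uhr C$-class exactly once per $L$-class. Given this, \Cref{linkLift} produces a class-bijective lift $G \to \Aut_B(E)$ of the given action — in particular a lift — which is what is claimed. (Note the statement only asserts a lift, not a class-bijective lift, so even a weaker version of the tree claim — that $L$ is a link, allowing the join to collapse — suffices.)

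The main obstacle is the tree/acyclicity claim for the join $L = \bigvee_n L_n$ on each $E^{\vee G}$-class, i.e., verifying in a Borel-uniform way that no spurious $L$-relations are created when the $L_n$ are amalgamated along $L_H$. This is where the algebraic structure of $G$ as an amalgamated free product must be used essentially: one wants a canonical-form / normal-form argument (every element of $G$ is an alternating product of coset representatives from the $G_n$ modulo $H$) transported through the lifts $G_n \to \Aut_B(E)$ to show that the composite of the corresponding partial bijections moves a chosen base point to a genuinely new $E$-class unless the word is trivial. I expect the $H$-part to require care: one must ensure that the $H$-lift chosen at the outset is exactly the one that each $G_n$-lift restricts to, so that the $L_n$ genuinely share the edge $L_H$ and the Bass--Serre graph is well-defined; this is exactly why the definition of the class-bijective lifting property is phrased as "extends a given class-bijective $H$-lift" rather than merely "a lift exists". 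Everything else — Borelness of $L_H$, $L_n$, and $L$, and the application of \Cref{linkLift} — is routine.
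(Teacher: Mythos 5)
Your first half is exactly the paper's argument: restrict the given $G \to \Out_B(E)$ to $H$, use the hypothesis on $H$ to get a class-bijective lift $H \to \Aut_B(E)$, and then use the class-bijective lifting property of each $H \to G_n$ to get class-bijective lifts $G_n \to \Aut_B(E)$ all restricting to that same $H$-lift. But at that point you should simply stop and invoke the universal property of the amalgamated free product at the level of groups: since the homomorphisms $G_n \to \Aut_B(E)$ agree on $H$, they induce a homomorphism $G \to \Aut_B(E)$, and since its composition with $p_E$ agrees with the given morphism $G \to \Out_B(E)$ on each $G_n$, and the $G_n$ generate $G$, it is a lift. That is the entire proof; no link for $(E, E^{\vee G})$ and no Bass--Serre combinatorics are needed, because the proposition only claims a lift.

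Your proposed final step is a genuine gap, not just an inefficiency. Setting $L_n = E_{G_n}^X$ for the lifted actions, the join $\bigvee_n L_n$ is precisely $E_G^X$ for the glued $G$-action, and by \Cref{linkLift} it is an $(E, E^{\vee G})$-link if and only if that glued action is class-bijective. So your ``key claim'' is equivalent to upgrading the conclusion to a class-bijective lift, which is strictly stronger than the proposition and is explicitly left open in the paper (see the remark after \Cref{mainLifts}, already for amalgamated free products of finite groups). The tree argument does not go through because the stabilizer of an $E$-class $[x]_E$ under the quotient action $G \car_B X/E$ need not be conjugate into any $G_n$: for an element $g$ of such a stabilizer of infinite reduced length, the construction imposes no constraint forcing $T_g(x) = x$, so $x$ and $T_g(x)$ can be two distinct points of one $E$-class lying in one $L$-class, destroying the link property. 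Your parenthetical fallback (``allowing the join to collapse'') does not rescue this: if the join collapses in this way then $L$ is by definition not a link and \Cref{linkLift} gives nothing, whereas if it is a link you have proved the open strengthening. The correct weakening of your route is exactly the universal-property argument above.
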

\begin{proof}
    Let $E$ be a CBER,
    and fix $G \to \Out_B(E)$.
    By assumption,
    there is a class-bijective lift of $H \to \Out_B(E)$.
    Then for each $n$,
    there is a class-bijective lift $G_n \to \Aut_B(E)$
    such that the following diagram commutes:
    \[
        \begin{tikzcd}
            H \dar \rar & \Aut_B(E) \dar[two heads, "p_E"] \\
            G_n \rar \urar & \Out_B(E)
        \end{tikzcd}
    \]
    Thus by the universal property of amalgamated products,
    there is a lift $G \to \Aut_B(E)$.
\end{proof}

\begin{thm}\label{finiteNormalSubgroup}
    Let $G$ be a countable group
    and let $N\tl G$ be a finite normal subgroup
    such that every outer action of $H = G/N$ has a class-bijective lift.
    \begin{enumerate}[label=(\arabic*)]
        \item \label{finiteNormalInclusion}
            The inclusion $N\hra G$ has the class-bijective lifting property.
        \item \label{finiteNormalCb}
            Every outer action of $G$ has a class-bijective lift.
    \end{enumerate}
\end{thm}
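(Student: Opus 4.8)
\emph{Proof sketch.} The plan is to deduce \ref{finiteNormalCb} from \ref{finiteNormalInclusion} and \Cref{finiteCb}, and to prove \ref{finiteNormalInclusion} by building the lift of $G$ out of the given lift of $N$ together with a lift of the induced outer action of $H = G/N$ on a suitable quotient.

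For \ref{finiteNormalCb} from \ref{finiteNormalInclusion}: given a morphism $G \to \Out_B(E)$, restrict it to $N \to \Out_B(E)$. Since $N$ is finite, \Cref{finiteCb} provides a class-bijective lift $N \to \Aut_B(E)$, and this fits into a commuting square with $G \to \Out_B(E)$. Applying \ref{finiteNormalInclusion} to this square yields a class-bijective lift $G \to \Aut_B(E)$, as required. So the content is \ref{finiteNormalInclusion}.

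For \ref{finiteNormalInclusion}: suppose we are given a CBER $E$ on $X$, a class-bijective action $N \car_B (X,E)$, and a morphism $G \to \Out_B(E)$ restricting on $N$ to the composite $N \to \Aut_B(E) \to \Out_B(E)$. Put $F = E^{\vee N}$; then $E \tl F$, and since the $N$-action is class-bijective each $F$-class contains exactly $[N:\Stab_N(x)] \le |N|$ many $E$-classes (all $E$-classes in a fixed $F$-class having the same stabilizer index), so $E \subseteq F$ has bounded finite index. The $G$-action on $X/E$ induced by $G \to \Out_B(E)$ permutes the $N$-orbits of $X/E$ since $N \tl G$, hence descends to an action of $G$ on $X/F = (X/E)/N$ that factors through $H$. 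Choosing for each $g \in G$ a representative $T_g \in \Aut_B(E)$ of its image in $\Out_B(E)$, with $T_n$ equal to the action of $n$ for $n \in N$, one checks that each $T_g$ preserves $F$ (as its action on $X/E$ respects $N$-orbits), that $g \mapsto [T_g]_{\Inn_B(F)}$ is a homomorphism $G \to \Out_B(F)$, and that it kills $N$ (since the action of $n \in N$ is $F$-inner); this gives an outer action $\phi \colon H \to \Out_B(F)$ with $F^{\vee \phi(H)} = E^{\vee G}$. By hypothesis on $H$, fix a class-bijective lift $\psi \colon H \to \Aut_B(F)$ of $\phi$.

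The remaining step — and the main obstacle — is to glue $\psi$ and the given $N$-action into a genuine action $\theta \colon G \car_B (X,E)$ lifting $G \to \Out_B(E)$ and restricting to the $N$-action. Fix a set-theoretic section $\sigma \colon H \to G$ with $\sigma(1) = 1$. For $h \in H$, correct $\psi(h)$ by the $N$-action: since $[T_{\sigma(h)}(x)]_E$ and $[\psi(h)(x)]_E$ lie in the same $F$-class, define $\Psi(h)(x) := m \cdot \psi(h)(x)$ where $m \in N$ is any element with $m \cdot [\psi(h)(x)]_E = [T_{\sigma(h)}(x)]_E$; by class-bijectivity of the $N$-action any two such $m$ differ by an element fixing $\psi(h)(x)$, so $\Psi(h)$ is a well-defined Borel map with $\Psi(h) \in \Aut_B(E)$ and $\Psi(h)(x) \mr E T_{\sigma(h)}(x)$. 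Then set $\theta(\sigma(h)\,n) := \Psi(h) \circ (\text{action of }n)$ for $n \in N$. One must verify that $\theta$ is a homomorphism — this amounts to the cocycle identity $\Psi(h)\Psi(h') = \Psi(hh') \circ (\text{action of }c)$ for the appropriate $c \in N$ coming from $\sigma$, together with the conjugation relations $\Psi(h) \circ (\text{action of }n) \circ \Psi(h)^{-1} = \text{action of }(\sigma(h)n\sigma(h)^{-1})$ — and this is where the finiteness of $N$ is used essentially, as it forces the naive choices above to be compatible and lets the $N$-valued cocycle identities be checked termwise; the abstract identities that must match are exactly those witnessing that $G$ is an extension of $H$ by $N$. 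Granting this, $\theta$ visibly restricts to the $N$-action and lifts $G \to \Out_B(E)$ (each $\theta(g)$ is $\Inn_B(E)$-equivalent to $T_g$). Finally $\theta$ is class-bijective: if $\theta(g)(x) \mr E x$ then $\psi(\bar g)(x) \mr F x$, so $\psi(\bar g)$ fixes $x$ (or $g \in N$, in which case we use class-bijectivity of the $N$-action directly), and then $\theta(g)(x)$ is an $N$-translate of $x$ that is $E$-equivalent to $x$, hence equals $x$. This completes \ref{finiteNormalInclusion}, and with it the theorem.

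I expect the cocycle/conjugation verification in the gluing step to be the principal technical burden; everything else is bookkeeping with the structures already produced.
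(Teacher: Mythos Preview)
Your reduction of \ref{finiteNormalCb} to \ref{finiteNormalInclusion} via \Cref{finiteCb} is fine and matches the paper. The gap is precisely where you flag it, in the gluing step, and it is real rather than merely technical. Your $\Psi(h)$ is defined as the unique point of $[T_{\sigma(h)}(x)]_E \cap [\psi(h)(x)]_L$, where $L = E_N^X$; but $\psi(h) \in \Aut_B(F)$ need not respect $L$ at all, so for distinct $x,y$ in the same $E$-class you may have $\psi(h)(x) \mr L \psi(h)(y)$, whence $\Psi(h)(x) = \Psi(h)(y)$ and $\Psi(h)$ is not even injective. For the same reason the conjugation identity $\Psi(h)\circ n = (\sigma(h)n\sigma(h)^{-1})\circ\Psi(h)$ holds only at the level of $E$-classes, not pointwise: $\psi(h)$ can scramble the $L$-grid inside each $F$-class arbitrarily, and post-composing by varying elements of $N$ cannot repair this. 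Finiteness of $N$ does nothing to force compatibility here; the extension-of-groups identities you invoke govern the action on $X/E$, not on $X$.

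The paper avoids this obstacle by not attempting to build the $G$-action directly. It works with links (via \Cref{linkLift}): pick a transversal $S$ for $L = E_N^X$, and use a cardinal-algebra cancellation argument to show that the natural injection $\Out_B(F\uhr S)\hra\Out_B(F)$ is a \emph{bijection}. This lets one pull the outer $H$-action on $X/F$ back to an outer action $H\to\Out_B(F\uhr S)$; the hypothesis on $H$ then gives an $(F\uhr S,\,E^{\vee G}\uhr S)$-link $L'$ on $S$, and the equivalence relation generated by $L$ and $L'$ is the desired $(E,E^{\vee G})$-link extending $L$. The cancellation step is exactly what guarantees that representatives of the $H$-action can be chosen compatibly with $L$, which is the missing ingredient in your approach.
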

\begin{proof}
    \ref{finiteNormalInclusion} implies \ref{finiteNormalCb} by \Cref{finiteCb},
    so it suffices to show \ref{finiteNormalInclusion}.
    
    Let $E$ be a CBER on $X$,
    and suppose we have
    \[
        \begin{tikzcd}
            N \dar[hook] \rar & \Aut_B(E) \dar[two heads, "p_E"] \\
            G \rar & \Out_B(E)
        \end{tikzcd}
    \]
    with $N\to \Aut_B(E)$ class-bijective,
    and let $F = E^{\vee N}$.
    Note that $L = E_N^X$ is an $(E, F)$-link.
    There is an induced outer action $H \to \Out_B(F)$.
    We can assume that $[F:E] = n < \infty$.
    Let $S$ be a transversal for $L$,
    and fix a Borel action $\Z/n\Z \car X$ generating $L$.
    
    Define an injection $\Aut_B(F\uhr S)\hra\Aut_B(F)$ as follows:
    given $T \in \Aut_B(F\uhr S)$,
    let $T' \in \Aut_B(F)$ be the unique morphism satisfying
    $T'(k\cdot x) = k\cdot T(x)$ for every $x\in S$ and $k \in \Z/n\Z$.
    This descends to an injection $\Out_B(F\uhr S) \hra \Out_B(F)$
    satisfying the following commutative diagram:
    \[
        \begin{tikzcd}
            \Out_B(F \uhr S) \rar[hook] \dar[hook, "i_{F\uhr S}"]
            & \Out_B(F) \dar[hook, "i_F"] \\
            \Sym_B(F \uhr S) \rar["\cong"] & \Sym_B(F)
        \end{tikzcd}
    \]
    We claim that this injection is a bijection.
    To see this,
    let $T\in \Aut_B(F)$.
    Since $X = \bigsqcup_{k \in \Z/n\Z} k\cdot S$,
    we have $n\wt S = \wt X$
    in the cardinal algebra $\K(F\times I_\N)$.
    Thus $n\wt{T(S)} = \wt{T(X)} = \wt X$,
    so by the cancellation law,
    we have $\wt S = \wt{T(S)}$,
    i.e.,
    there is some $T'\in \Inn_B(F)$ with $T'(T(S)) = S$.
    Then $(T'T)\uhr S\in \Aut_B(F\uhr S)$ is the desired map.
    
    Thus we obtain an outer action $H\to\Out_B(F\uhr S)$
    and by assumption,
    there is an $(F\uhr S, E^{\vee G}\uhr S)$-link $L'$.
    Then the equivalence relation generated by $L$ and $L'$ is an $(E,F')$-link.
\end{proof}

We will prove next a generalization of \Cref{finiteCb} to morphisms.
For that,
we need the following result.
\begin{prop}\label{boundedIndex}
    Let $E\subseteq F$ be a bounded index extension of CBERs.
    Then the following are equivalent:
    \begin{enumerate}[label=(\arabic*)]
        \item \label{boundedNormal}
            $E\tl F$.
        \item \label{finiteSubgroup}
            There is a finite subgroup $G\le\Out_B(E)$
            such that $F = E^{\vee G}$.
    \end{enumerate}
\end{prop}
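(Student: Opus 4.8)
The plan is to prove both implications of \Cref{boundedIndex}.

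For \ref{finiteSubgroup} $\implies$ \ref{boundedNormal}: this direction is immediate from the definition of normality, since a finite subgroup $G \le \Out_B(E)$ with $F = E^{\vee G}$ is in particular a countable subgroup of $\Out_B(E)$, so condition \ref{outerSubgroupNormal} in the definition of $E \tl F$ holds.

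For \ref{boundedNormal} $\implies$ \ref{finiteSubgroup}: first I would reduce to the case where $[F:E] = N$ is constant, by partitioning $X$ into the $F$-invariant Borel sets $X_k = \{x : |[x]_F/E| = k\}$ for $k \le N_0$ (where $N_0$ bounds the index); it suffices to produce a finite subgroup on each piece, and then take the product, embedding it into $\Out_B(E)$ via the direct sum decomposition. So assume $[F:E] = N$. The idea is to use a link. By \Cref{finiteLink}, since $E \subseteq F$ has finite index, there is an $(E,F)$-link $L$. Now $L$ is a CBER with $L \subseteq F$ such that every $E\uhr C$-class meets every $L\uhr C$-class exactly once, for each $F$-class $C$; since $[F:E] = N$, this forces every $L$-class to have exactly $N$ elements, one in each $E$-class of its $F$-class, and every $F$-class is partitioned into $E$-classes and also into $L$-classes, each $E$-class being a transversal for $L$ within its $F$-class. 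Fix a Borel action of a finite group $Q$ of order $N$ generating $L$ (possible since $L$ is a finite CBER with all classes of size $N$ — one can take $Q = \Z/N\Z$ and use that finite CBERs with constant class size $N$ are generated by a Borel $\Z/N\Z$-action, via a Borel $N$-coloring argument, or invoke that $L$ is smooth hence generated by such an action on a transversal). For each $q \in Q$, the map $T_q : X \to X$ given by the $Q$-action is a Borel automorphism of $X$; I claim each $T_q$ lies in $\Aut_B(E)$, i.e., $x \E y \iff T_q(x) \E T_q(y)$. Indeed, $T_q$ maps each $E$-class bijectively onto another $E$-class within the same $F$-class: since each $E$-class is a transversal for $L$ inside its $F$-class, and $T_q$ permutes $L$-classes trivially... here I need to be careful, so let me restructure.

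The cleaner approach: since $L$ is a link and $[F:E]=N$, each $F$-class $C$ carries a bijection $[x]_E \leftrightarrow [x]_L$ between its set of $E$-classes and its set of $L$-classes (both indexed by the $N$ points in any fixed $E$-class or any fixed $L$-class). So pick a Borel transversal $S$ for $L$ (which exists since $L$ is smooth, being a finite link hence built from a smooth structure — actually a link of a finite-index extension need not a priori be smooth, but $L$ has finite classes so it is smooth). Within each $F$-class $C$, the set $S \cap C$ has exactly $N$ points, one per $L$-class, and these $N$ points lie in the $N$ distinct $E$-classes of $C$ (since each $E$-class meets each $L$-class once, $S\cap C$ is a transversal for $E\uhr C$ too). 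Now fix a Borel action of $\Z/N\Z$ on $X$ generating $L$ (using smoothness of $L$ and constant class size), giving automorphisms $T_0, \dots, T_{N-1}$ of $X$ with $T_k$ the ``shift by $k$'' along $L$-classes. Each $T_k$ sends the point of $S\cap C$ in a given $E$-class to the point of $(T_k S)\cap C$ in another $E$-class, and more generally $T_k$ maps each $E$-class of $C$ onto another: because $T_k$ commutes with the $L$-structure and the $E$-classes are exactly the ``diagonals'' picked out by the link correspondence, one checks $T_k([x]_E) = [T_k(x)]_E$, hence $T_k \in \Aut_B(E)$ and its image in $\Out_B(E)$ generates a finite subgroup $\bar{Q}$ with $E^{\vee \bar Q} = E^{\vee L} = F$ (the last equality since $L$ already connects all $E$-classes within each $F$-class). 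The main obstacle will be verifying cleanly that the link automorphisms $T_k$ do normalize $E$ — that the ``shift along $L$'' respects the $E$-class partition — which requires unwinding the link condition carefully; a slick way is to transport everything to the transversal $S$: $E \uhr S = L \uhr S = \Delta_S$ is trivial, $F\uhr S$ has classes of size $N$, and the $\Z/N\Z$-action on $X$ restricts/descends appropriately, so that $\Aut_B(F\uhr S)$ acts, an argument parallel to the transversal construction in the proof of \Cref{finiteNormalSubgroup}.
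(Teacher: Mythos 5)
The direction \ref{finiteSubgroup} $\implies$ \ref{boundedNormal} and your reduction to constant index $[F:E]=N$ are fine, but the core of your argument for \ref{boundedNormal} $\implies$ \ref{finiteSubgroup} has a genuine gap at exactly the point you flag: the claim that a Borel action of $\Z/N\Z$ (or of any finite group) generating the link $L$ can be taken to lie in $\Aut_B(E)$. For an arbitrary generating action this is false once $N\ge 3$: inside an $F$-class $C$, view the $E$-classes as ``rows'' and the $L$-classes as ``columns''; a generating action shifts each column according to some Borel-chosen ordering of that column, and the induced permutation of the rows can differ from column to column, so two $E$-equivalent points (same row, different columns) can be sent to $E$-inequivalent points. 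Making the choice coherent amounts to choosing, Borel-uniformly in $C$, a single permutation of $C/E$ shared by all columns of $C$, and the link alone does not provide this: for instance, for $E = E^X_{3\Z}\subseteq F = E^X_{\Z}$ on the free part of the Bernoulli shift, ergodicity of the $3\Z$-action rules out any Borel labelling of the three $E$-classes inside each $F$-class, so no such coherent choice comes from pure selection arguments --- you must re-use the normality hypothesis, which your construction never does after invoking \Cref{finiteLink}. (Your fallback via a transversal $S$ of $L$ rests on a false identity: $L\uhr S=\Delta_S$ holds, but $E\uhr S\neq\Delta_S$ in general, since $S$ meets each $L$-class once but may meet a single $E$-class many times. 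Also note that a link generated by a finite subgroup of $\Aut_B(E)$ is essentially the conclusion itself, via \Cref{linkLift} and \Cref{finiteCb}, so it cannot simply be assumed.) Your argument does work when $N=2$, where the swap along $L$-classes is canonical and automatically respects $E$; that is the only case where the link by itself suffices.

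The paper's proof does not use \Cref{finiteLink} at all; instead it exploits normality directly and quantitatively. It takes a countable $H=(h_n)_n\le\Aut_B(E)$ with $F=E^{\vee H}$ and, working $F$-class by $F$-class, pieces together elements $g_n\in\Inn_B(F)\cap\Aut_B(E)$ realizing, on each $F$-class $C$, the distinct permutations of $C/E$ induced by the $h_i$; since there are at most $N!$ such permutations per class, the sequence $(g_n)$ is eventually the identity, so only finitely many $g_n$ are nontrivial and $F=E^{\vee\tilde G}$ for the finitely generated $\tilde G=\ev{g_n}_n$. Finally, the image of $\Inn_B(F)\cap\Aut_B(E)$ in $\Out_B(E)$ is locally finite, being a subgroup of $(S_N)^{X/F}$, so the image of $\tilde G$ is the desired finite subgroup. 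Some argument of this kind --- using the witnessing group to control the induced permutations of $E$-classes uniformly across each $F$-class --- is what your link-based construction is missing.
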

\begin{proof}
    \ref{finiteSubgroup} $\implies$ \ref{boundedNormal} Immediate.
    
    \ref{boundedNormal} $\implies$ \ref{finiteSubgroup}
    Let $H = (h_n)_n\le \Aut_B(E)$ be a countable subgroup
    such that $F = E^{\vee H}$.
    We define inductively a sequence
    $(g_n)_n\subseteq \Inn_B(F)\cap \Aut_B(E)$ as follows:
    for every $F$-class $C$,
    if there is $i$ such that
    $p_{E\uhr C}(h_i\uhr C)\neq p_{E\uhr C}(g_j\uhr C)$ for all $j < n$,
    then for the least $i$ with this property,
    set $g_n\uhr C = h_i\uhr C$;
    otherwise set $g_n\uhr C = \id{}\uhr C$.
    
    Note that the sequence $(g_n)_n$ is eventually equal to $\id_X$,
    since $E$ is of bounded index in $F$.
    Thus the group $\tilde G = \ev{g_n}_{n < \infty} \le \Inn_B(F)\cap \Aut_B(E)$
    is finitely generated.
    Note also that $F = E^{\vee\tilde G}$.
    Now the image of $\Inn_B(F)\cap \Aut_B(E)$ in $\Out_B(E)$ is locally finite,
    since it is a subgroup of $(S_n)^{X/F}$ for some finite symmetric group $S_n$.
    So the image $G$ of $\tilde G$ in $\Out_B(E)$ is finite,
    and we are done.
\end{proof}

We have a generalization of \Cref{finiteLink}:
\begin{thm}\label{finiteLinkExt}
    Let $E\subseteq F\subseteq F'$ be CBERs such that
    $E$ has finite index in $F'$ and $E\tl F'$.
    Then every $(E,F)$-link is contained in an $(E,F')$-link.
\end{thm}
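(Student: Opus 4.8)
\subsection*{Proof plan}

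The plan is to adapt the proof of \Cref{finiteLink}, using the given link to "quotient out" the $E$-structure inside each $F$-class so that the residual problem becomes an ordinary finite-index linking problem to which \Cref{finiteLink} applies. First I would reduce to the case where $E$ has \emph{bounded} index in $F'$ by partitioning $X$ into the Borel $F'$-invariant sets $\{x : [F':E]_{[x]_{F'}} = k\}$, $k \le \N$; note also that $E \tl F$ by \Cref{normalFacts}\ref{intermediateNormal}, and that all $E$-classes inside a common $F'$-class have the same cardinality $m$ (by normality of $E$ in $F'$, as in the proof of \Cref{smoothFacts}). Now let $L$ be the given $(E,F)$-link. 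Since each $L$-class is a transversal of $E\uhr C$ for an $F$-class $C$, it has size $[F:E]_C \le [F':E] < \infty$; thus $L$ is a \emph{finite} CBER, hence smooth, so I can fix a Borel transversal $S$ of $L$ and the Borel retraction $q\colon X\to S$ sending $x$ to the unique point of $S\cap[x]_L$. The link property of $L$ gives that for every $E$-class $[c]_E$ inside an $F$-class $C$, the map $q\uhr[c]_E\colon [c]_E\to S\cap C$ is a bijection (it is injective since $[c]_E$ meets each $L$-class of $C$ once, and surjective since every $L$-class of $C$ meets $[c]_E$).

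The key reduction is: it suffices to produce an $(F\uhr S,\, F'\uhr S)$-link $M$. Given such an $M$, define $L'$ on $X$ by $x\mathrel{L'}y \iff (x\mathrel{F'}y)\ \&\ (q(x)\mathrel M q(y))$. One checks routinely that $L'$ is a CBER with $L\subseteq L'\subseteq F'$ ($L\subseteq L'$ because $q$ is constant on $L$-classes), and that $L'$ is an $(E,F')$-link: fixing an $F'$-class $D$, an $E$-class $[z]_E\subseteq C\subseteq D$ and an $L'$-class $Z\subseteq D$, the set $Z\cap[z]_E$ equals $\{y\in[z]_E : q(y)\mathrel M q(z_0)\}$ for any $z_0\in Z$, and since $q\uhr[z]_E$ is a bijection onto $S\cap C$ and the $M$-class of $q(z_0)$ meets the $(F\uhr S)$-class $S\cap C$ in exactly one point (link property of $M$), this set is a singleton. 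Finally, $F\uhr S\subseteq F'\uhr S$ is a finite-index (in fact bounded-index, $\le [F':E]$) extension, so by \Cref{finiteLink} an $(F\uhr S,F'\uhr S)$-link exists \emph{provided} $F\uhr S\tl F'\uhr S$.

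Hence everything comes down to the one remaining point, which I expect to be the main obstacle: showing $F\uhr S\tl F'\uhr S$, using the hypothesis $E\tl F'$. Writing $F'=E^{\vee K}$ with $K\le\Aut_B(E)$ countable (and, after enlarging, $H\le K$ where $L=E_H^X$ for a class-bijective $H\le\Aut_B(E)$ with $E^{\vee H}=F$), one wants to descend $K$ to a countable group of Borel automorphisms of $F\uhr S$ whose $\vee$ is $F'\uhr S$. The delicate issue is that elements of $K$ preserve $E$ and $F'$ but need \emph{not} preserve $F$ or $L$, so the naive map $s\mapsto q(k\cdot s)$ on $S$ need not be injective; the construction must instead exploit that (a) $K$ acts transitively on the $E$-classes inside each $F'$-class, and (b) via $q$ the set $S\cap C$ is canonically identified with \emph{every} $E$-class of $C$, in order to match up the sets $S\cap C$ (as $C$ ranges over the $F$-classes of a fixed $F'$-class) Borel-coherently — and to do so \emph{without} ever choosing a distinguished $E$-class per $F'$-class, since such a choice can genuinely fail to be Borel (e.g.\ for $E=E_0$, $F'=E_0^{\vee\iota}$ with $\iota$ the bit-flip). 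This is the analogue, in the present setting, of the isomorphism $\Out_B(F\uhr S)\cong\Out_B(F)$ that powers the proof of \Cref{finiteNormalSubgroup}; alternatively, one can bypass the passage to $S$ and argue directly in the style of \Cref{finiteLink}, taking a $\Phi$-maximal fsr for $\Phi=\{$finite rainbow transversals of $E$ inside an $F'$-class that are unions of $L$-classes$\}$ and attaching the leftover points via the $K$-action (with $E\tl F'$ guaranteeing that some $E$-class in each $F'$-class is fully covered and that $K$-orbits reach it) — in which case the main obstacle reappears as verifying that this leftover attachment respects $L$ and produces no repeated $E$-classes.
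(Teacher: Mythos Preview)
Your reduction is correct: if $M$ is an $(F\uhr S,\,F'\uhr S)$-link then $L' = \{(x,y)\in F' : q(x)\mathrel M q(y)\}$ is an $(E,F')$-link containing $L$, exactly as you argue. The gap is the step you yourself flag: you need $F\uhr S \tl F'\uhr S$ in order to invoke \Cref{finiteLink}, and this does \emph{not} obviously follow from $E\tl F'$. The automorphisms of $E$ that witness $F' = E^{\vee K}$ need not preserve $F$ or $L$, and the candidate $s\mapsto q(k\cdot s)$ need not even carry $F\uhr S$-classes to $F\uhr S$-classes. Neither of your two sketched routes closes this: the first (descending $K$ to $\Aut_B(F\uhr S)$) is precisely the difficulty, and the second (a single $\Phi$-maximal fsr pass followed by attaching leftovers along $K$) runs into exactly the problem you name --- the attachment need not respect $L$. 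I do not see how to establish $F\uhr S\tl F'\uhr S$ by an argument easier than the theorem itself.

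The paper's proof never asserts this normality and takes a different route. It inducts on an upper bound $n$ for $[F':F]$. With $S$ a transversal for $L$, it takes a $\Phi$-maximal fsr $R$ on $S$ where $\Phi$ is the set of finite subsets of $S$ that are transversals for $F$ inside some $F'$-class; this yields an $F$-invariant Borel partition $X = Y\sqcup Z$ (with $Y$ the union of $F$-classes whose trace on $S$ lies in $\dom R$) such that every $F'$-class meets both pieces. Thus $F'\uhr Y$ and $F'\uhr Z$ each have at most $n-1$ $F$-classes per $F'$-class, and \Cref{normalFacts}\ref{subsetNormal} plus the inductive hypothesis give links $L_Y\supseteq L\uhr Y$ and $L_Z\supseteq L\uhr Z$. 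The substantive step is the \emph{gluing}: one must produce some $T\in\Inn_B(F')$ carrying a transversal $S_Y$ of $L_Y$ onto a transversal $S_Z$ of $L_Z$. This is done by a cardinal-algebra computation in $\K(F'\times I_\N)$: invoking \Cref{boundedIndex} to get a finite $G\le\Out_B(E)$ with $F'=E^{\vee G}$, counting shows $|G| n_Y n_Z\,\wt{S_Y} = |G| n_Y n_Z\,\wt{S_Z}$, and the cancellation law gives $\wt{S_Y}=\wt{S_Z}$. This equidecomposition is the substitute for the normality you were seeking.
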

\begin{proof}
    By partitioning the underlying standard Borel space $X$,
    we can assume that there is some $n < \infty$ such that
    every $F'$-class contains at most $n$ $F$-classes.
    We proceed by induction on $n$.
    The case $n = 1$ is trivial.
    
    Let $L$ be an $(E,F)$-link and let $S$ be a transversal for $L$.
    Let $\Phi$ be the set of $A\in [F'\uhr S]^{<\infty}$
    which are a transversal for $F\uhr C$ for some $F'$-class $C$.
    By \cite[Lemma 7.3]{KM04},
    there is a $\Phi$-maximal fsr $R$.
    Let $Y\subseteq X$ be the set of $x\in X$
    such that $[x]_F\subseteq [\dom(R)]_L$
    and let $Z = X\setminus Y$.
    We can assume that no $F'$-class is contained in $Y$,
    since the equivalence relation generated by $R$ and $L$
    is an $(E,F')$-link on such a class.
    By $\Phi$-maximality of $R$,
    no $F'$-class is contained in $Z$ either.
    By \ref{subsetNormal} of \Cref{normalFacts},
    we have $E\uhr Y\tl F'\uhr Y$,
    so by the induction hypothesis,
    there is an $(E\uhr Y, F'\uhr Y)$-link $L_Y$ containing $L\uhr Y$.
    Similarly,
    there is an $(E\uhr Z, F'\uhr Z)$-link $L_Z$ containing $L\uhr Z$.
    
    Let $S_Y$ and $S_Z$ be transversals for $L_Y$ and $L_Z$ respectively.
    It suffices to show that there is some $T\in \Inn_B(F')$
    such that $T(S_Y) = S_Z$,
    since then the smallest equivalence relation
    containing $L_Y$ and $L_Z$ and $\{(x, T(x)):x\in S_Y\}$
    is an $(E,F')$-link.
    In other words,
    we need to show that $\wt{S_Y} = \wt{S_Z}$
    in the cardinal algebra $\K(F'\times I_\N)$.
    By \Cref{boundedIndex},
    there is a finite subgroup $G\le\Out_B(E)$
    such that $F' = E^{\vee G}$.
    By partitioning $X$,
    we can assume that $[F'\uhr Y:E\uhr Y] = n_Y$
    and $[F'\uhr Z: E \uhr Z] = n_Z$
    for some $n_Y, n_Z < \infty$.
    Then $\wt Y = n_Y\wt{S_Y}$ and $\wt Z = n_Z\wt{S_Z}$.
    Let $k = \frac{|G|}{n_Y + n_Z}$.
    Then for every $x\in X$,
    we have
    \[
        |\{g \in G : [x]_E \subseteq g\cdot Y\}|
        = \sum_{[y]_E \subseteq Y}
            |\{g \in G : [x]_E = g\cdot [y]_E\}|
        = k n_Y,
    \]
    and thus $|G|\wt Y = k n_Y\wt X$.
    Similarly, $|G|\wt Z = k n_Z\wt X$.
    Thus
    \[
        |G| n_Y n_Z \wt{S_Y}
        = |G| n_Z \wt Y
        = k n_Y n_Z \wt X
        = |G| n_Y \wt Z
        = |G| n_Y n_Z \wt{S_Z},
    \]
    which yields $\wt{S_Y} = \wt{S_Z}$ by the cancellation law.
\end{proof}

\begin{cor}\label{finiteHomCb}
    Every morphism of finite groups has the class-bijective lifting property.
\end{cor}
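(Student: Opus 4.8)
The plan is to deduce this from the link-extension theorem \Cref{finiteLinkExt} together with the dictionary between links and class-bijective lifts in \Cref{linkLift}. Let $H\to G$ be a morphism of finite groups, let $E$ be a CBER on $X$, and suppose we are given a class-bijective action $H\car_B(X,E)$ and an outer action $G\car_B X/E$ whose induced maps into $\Out_B(E)$ agree along $H\to G$, i.e.\ the defining square commutes. Put $F=E^{\vee H}$ and $F'=E^{\vee G}$. Since $H\to\Aut_B(E)$ lifts the composite $H\to G\to\Out_B(E)$, we have $E_H^X\subseteq F'$, and therefore $E\subseteq F\subseteq F'$.

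Two of the hypotheses of \Cref{finiteLinkExt} are now immediate. Because $G$ is finite, every $F'$-class is a single $G$-orbit of an $E$-class and so contains at most $|G|$ many $E$-classes; thus $E$ has finite index in $F'$. And $E\tl F'$ by definition of normality, witnessed by the outer action $G\to\Out_B(E)$. Moreover $L:=E_H^X$ is an $(E,F)$-link: this is exactly the easy direction of \Cref{linkLift} (the implication from class-bijective lift to link), applied to the given class-bijective $H$-action. Applying \Cref{finiteLinkExt} to the tower $E\subseteq F\subseteq F'$ now produces an $(E,F')$-link $L'$ containing $L$.

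It remains to turn $L'$ into the lift we want. The other direction of \Cref{linkLift}, applied to the outer action $G\car_B X/E$ with the link $L'$, yields a class-bijective lift $G\car_B(X,E)$ in which $g\cdot x$ is by definition the unique element of $[x]_{L'}\cap(g\cdot[x]_E)$; since $[g\cdot x]_E=g\cdot[x]_E$ by construction, this lift does induce the prescribed outer $G$-action. The only point requiring attention --- and the reason for arranging $L'\supseteq E_H^X$ --- is that this new action restricts to the original $H$-action on $(X,E)$. Indeed, for $h\in H$ and $x\in X$, the point $h\cdot x$ obtained from the original action lies in $[x]_{E_H^X}\subseteq[x]_{L'}$ and satisfies $[h\cdot x]_E=h\cdot[x]_E$ by commutativity of the square, so it is the unique element of $[x]_{L'}\cap(h\cdot[x]_E)$ and hence equals the value of $h$ under the new action. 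Thus the constructed class-bijective action is a lift in the given diagram. All the substantive work is carried by \Cref{finiteLinkExt} and \Cref{linkLift}; the final compatibility check is the only new ingredient, and I do not expect any serious obstacle here.
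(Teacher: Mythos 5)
Your proposal is correct and follows essentially the same route as the paper: take $L=E_H^X$ as an $(E,E^{\vee H})$-link, extend it to an $(E,E^{\vee G})$-link via \Cref{finiteLinkExt}, and read off the class-bijective $G$-action from the extended link as in \Cref{linkLift}. Your explicit check that the resulting action restricts to the given $H$-action (using $E_H^X\subseteq L'$ and commutativity of the square) is exactly the point the paper leaves implicit, and it is handled correctly.
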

\begin{proof}
    Suppose we have
    \[
        \begin{tikzcd}
            H \dar \rar & \Aut_B(E) \dar[two heads, "p_E"] \\
            G \rar & \Out_B(E)
        \end{tikzcd}
    \]
    with $H$ and $G$ finite,
    and $H\to \Aut_B(E)$ class-bijective.
    Then $E_H$ is an $(E,E^{\vee H})$-link,
    so by \Cref{finiteLinkExt},
    there is an $(E,E^{\vee G})$-link $L_G$ containing $E_H$.
    This lets us define an action of $G$
    by setting $g\cdot x$ to be the unique element
    in both $[x]_{L_G}$ and $g\cdot [x]_E$.
\end{proof}

\begin{cor}\label{finiteAmalgamLift}
    Every outer action of an amalgamated free product
    of finite groups has a lift.
\end{cor}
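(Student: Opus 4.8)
The plan is to read off this corollary from \Cref{amalgamLift}, instantiated with finite groups as the building blocks. Present $G$ as the amalgamated free product of a countable family of finite groups $(G_n)_n$ over a common subgroup $H$, equipped with morphisms $H \to G_n$. The first observation — harmless but needed — is that $H$ is finite: it embeds into (say) $G_0$, which is finite. So the inputs to \Cref{amalgamLift} all concern finite groups.

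With that in hand, I would check the two hypotheses of \Cref{amalgamLift} directly. First, every outer action of the finite group $H$ has a class-bijective lift, which is exactly \Cref{finiteCb}. Second, each $H \to G_n$ is a morphism between finite groups, and \Cref{finiteHomCb} says every such morphism has the class-bijective lifting property. These are precisely the hypotheses of \Cref{amalgamLift}, whose conclusion is that every outer action of $G$ lifts, as desired.

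I do not expect any genuine obstacle at this stage: the real work has already been done upstream — in \Cref{finiteLinkExt} (the link-extension result powering \Cref{finiteHomCb}, via the cardinal-algebra cancellation law) and in the universal-property argument establishing \Cref{amalgamLift}. The only points to be careful about are the elementary one noted above (finiteness of the amalgamating subgroup, so that \Cref{finiteCb} and \Cref{finiteHomCb} genuinely apply) and the bookkeeping that the commuting lift diagrams produced by \Cref{finiteHomCb} for the various $G_n$ agree on $H$, so that the universal property of the amalgamated free product can be applied — but this is already built into the statement of \Cref{amalgamLift}, so nothing further is required.
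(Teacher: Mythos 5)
Your proposal is correct and follows the paper's own proof essentially verbatim: present $G$ as the amalgamated free product of finite groups $(G_n)_n$ over a finite group $H$, apply \Cref{finiteCb} to get a class-bijective lift for $H$, apply \Cref{finiteHomCb} to get the class-bijective lifting property for each $H \to G_n$, and conclude via \Cref{amalgamLift}. The auxiliary remarks (finiteness of $H$, compatibility over $H$ being built into \Cref{amalgamLift}) are accurate and require nothing beyond what the paper already does.
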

\begin{proof}
    Let $H$ be a finite group,
    let $(G_n)_{n < \infty}$ be finite groups,
    let $H \to G_n$ be morphisms,
    and let $G$ be the amalgamated free product of the $G_n$ over $H$.
    By \Cref{finiteCb},
    every outer action of $H$ has a class-bijective lift.
    By \Cref{finiteHomCb},
    the morphisms $H \to G_n$ have the class-bijective lifting property.
    Thus by \Cref{amalgamLift},
    every outer action of $G$ lifts.
\end{proof}

Given CBERs $E\subseteq F$,
we say that $F/E$ is \textbf{hyperfinite}
if there is an increasing sequence $(F_n)_n$
of finite index extensions of $E$ such that $F = \bigcup_n F_n$.
\begin{cor}\label{hfLink}
    Let $E\tl F$ be CBERs with $F/E$ hyperfinite.
    Then there is an $(E,F)$-link.
\end{cor}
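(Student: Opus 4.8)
The plan is to build the $(E,F)$-link as an increasing union of links for the finite-index approximants $F_n$, using \Cref{finiteLinkExt} to do the extension step. Write $F = \bigcup_n F_n$ where $F_0 = E \subseteq F_1 \subseteq F_2 \subseteq \cdots$ with each $F_n$ of finite index over $E$. By \Cref{normalFacts}\ref{intermediateNormal}, $E \tl F_n$ for every $n$ (since $E \subseteq F_n \subseteq F$). First I would use \Cref{finiteLink} to produce an $(E, F_1)$-link $L_1$. Then, given an $(E, F_n)$-link $L_n$, I would apply \Cref{finiteLinkExt} with the triple $E \subseteq F_n \subseteq F_{n+1}$ — noting that $E$ has finite index in $F_{n+1}$ and $E \tl F_{n+1}$ — to obtain an $(E, F_{n+1})$-link $L_{n+1} \supseteq L_n$.

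Having built the increasing sequence $(L_n)_n$ of CBERs with $L_n \subseteq F_n \subseteq F$, set $L = \bigcup_n L_n$. This is a CBER (an increasing union of CBERs is a CBER) and $L \subseteq F$. It remains to verify the link condition: for each $F$-class $C$, every $(E \uhr C)$-class meets every $(L \uhr C)$-class exactly once. I would check this by a local argument on a fixed $F$-class $C$: given an $E$-class $D \subseteq C$ and an $L$-class $\ell \subseteq C$, pick $n$ large enough that $\ell$ is already an $L_n$-class (possible since $\ell$ is a single $L_n$-class for all sufficiently large $n$, as the $L_n$ stabilize on $\ell$ once $\ell \subseteq F_n$ — more carefully, $\ell = \bigcup_n (\ell \cap L_n)$ and $\ell \cap L_n$ is eventually all of $\ell$; and $D \subseteq C' $ for some $F_n$-class $C' \subseteq C$). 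Then the defining property of the $(E, F_n)$-link $L_n$ gives that $D$ and $\ell$ meet in exactly one point, and this is inherited by $L$.

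The one point needing a little care — and the main (minor) obstacle — is ensuring the ``exactly once'' works globally: that $D \cap \ell$ is nonempty even when $D$ and $\ell$ only become visible inside a common finite-index piece for large $n$, and that it is not met more than once. The nonemptiness is the substantive direction: for a fixed $E$-class $D \subseteq C$ and $L$-class $\ell \subseteq C$, choose $n$ with $D, \ell$ both contained in a single $F_n$-class $C_n \subseteq C$ (which exists since $C = \bigcup_n C_n$ for an increasing sequence of $F_n$-classes $C_n$, and $\ell$ being a single $L_n$-class for large $n$ forces $\ell \subseteq C_n$); then the $(E, F_n)$-link property of $L_n$, applied inside $C_n$, yields a unique point in $D \cap \ell$. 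Uniqueness globally follows since $\ell$ is an $L_n$-class and two points of $D \cap \ell$ would already violate the link property of $L_n$. Thus $L$ is an $(E, F)$-link.

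\begin{proof}
    Write $F = \bigcup_n F_n$ where $E = F_0 \subseteq F_1 \subseteq F_2 \subseteq \cdots$
    is an increasing sequence of finite index extensions of $E$.
    By \Cref{normalFacts} \ref{intermediateNormal},
    since $E \subseteq F_n \subseteq F$ and $E \tl F$,
    we have $E \tl F_n$ for every $n$.
    By \Cref{finiteLink},
    there is an $(E, F_1)$-link $L_1$.
    Given an $(E, F_n)$-link $L_n$,
    apply \Cref{finiteLinkExt} to the triple $E \subseteq F_n \subseteq F_{n+1}$
    (noting $E$ has finite index in $F_{n+1}$ and $E \tl F_{n+1}$)
    to obtain an $(E, F_{n+1})$-link $L_{n+1} \supseteq L_n$.

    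Set $L = \bigcup_n L_n$,
    an increasing union of CBERs, hence a CBER, with $L \subseteq F$.
    We check the link condition.
    Fix an $F$-class $C$, an $(E \uhr C)$-class $D$, and an $(L \uhr C)$-class $\ell$.
    Since $C = \bigcup_n C_n$ for an increasing sequence of $F_n$-classes $C_n$,
    and since $\ell = \bigcup_n (\ell \cap [x]_{L_n})$ for $x \in \ell$
    with $\ell \cap [x]_{L_n}$ eventually equal to $\ell$,
    we may pick $n$ large enough that $\ell$ is a single $(L_n)$-class
    and $D, \ell \subseteq C_n$.
    Then $L_n$ being an $(E, F_n)$-link gives,
    inside the $F_n$-class $C_n$,
    that $D$ meets $\ell$ in exactly one point.
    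Since $\ell$ is an $L_n$-class,
    no point of $D$ outside $\ell$ is $L$-related to a point of $D \cap \ell$,
    and $\ell$ meets no other point of $D$;
    hence $D \cap \ell$ is a singleton.
    Thus $L$ is an $(E, F)$-link.
\end{proof}
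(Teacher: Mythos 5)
Your overall strategy is the same as the paper's: the paper's proof of this corollary is literally to apply \Cref{finiteLinkExt} countably many times, i.e.\ to build an increasing chain of $(E,F_n)$-links and pass to the union, exactly as you do, and your iteration step is set up correctly ($E\tl F_{n+1}$ by \Cref{normalFacts} \ref{intermediateNormal}, $[F_{n+1}:E]<\infty$, base case from \Cref{finiteLink}).

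However, your verification that $L=\bigcup_n L_n$ is an $(E,F)$-link rests on a false claim: that an $L$-class $\ell$ is a single $L_n$-class for all sufficiently large $n$ (equivalently, that $\ell$ is contained in a single $F_n$-class $C_n$, and that the $L_n$-classes ``stabilize''). Each $L_n$-class is finite, since inside an $F_n$-class it contains exactly one point of each $E$-class and $[F_n:E]<\infty$; but when an $F$-class $C$ contains infinitely many $E$-classes --- the typical case for hyperfinite $F/E$ of infinite index --- the $L$-classes in $C$ are infinite, so they are never $L_n$-classes and are not contained in any $C_n$: they grow forever rather than stabilizing. The conclusion you want is nevertheless true, and the repair is local. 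For existence, fix $x\in D$ and $y\in\ell$; since $x\F y$, there is $n$ with $x\mathrel{F_n}y$, hence $D=[x]_E\subseteq[y]_{F_n}$, and the link property of $L_n$ on the $F_n$-class $[y]_{F_n}$ yields a point $z\in D\cap[y]_{L_n}\subseteq D\cap\ell$. For uniqueness, if $z\neq z'$ both lay in $D\cap\ell$, then $z\mathrel{L_m}z'$ for some $m$ while $z\E z'$, so the $E$-class of $z$ would meet the $L_m$-class of $z$ twice inside $[z]_{F_m}$, contradicting the link property of $L_m$. With this substitution your argument is complete and coincides with the paper's.
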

\begin{proof}
    Apply \Cref{finiteLinkExt} countably many times.
\end{proof}

\begin{cor}\label{lfCb}
    Every outer action of a locally finite group
    has a class-bijective lift.
\end{cor}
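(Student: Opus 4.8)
The plan is to produce an $(E, E^{\vee G})$-link and then invoke \Cref{linkLift}.

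First I would recall that a countable locally finite group $G$ is the increasing union $G = \bigcup_n G_n$ of a sequence of finite subgroups: enumerating $G = \{g_0, g_1, \ldots\}$ and setting $G_n = \ev{g_0, \ldots, g_{n-1}}$ yields finite subgroups (by local finiteness) whose union is $G$, with $G_n \le G_{n+1}$.

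Next, fix an outer action $G \to \Out_B(E)$ on $X/E$ and put $F = E^{\vee G}$. Since this is a morphism from a countable group into $\Out_B(E)$ with $F = E^{\vee G}$, we get $E \tl F$ directly from the definition of normality. Put $F_n = E^{\vee G_n}$. Each $F_n$-class is the union of at most $|G_n|$ many $E$-classes, so $F_n$ is a (bounded, hence) finite index extension of $E$; the $F_n$ are increasing since the $G_n$ are; and $F = \bigcup_n F_n$, because $x \E^{\vee G} y$ holds iff $g \cdot [x]_E = [y]_E$ for some $g \in G$, and any such $g$ lies in some $G_n$. Therefore $F/E$ is hyperfinite in the sense preceding \Cref{hfLink}.

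At this point \Cref{hfLink} applies and produces an $(E, F)$-link, and \Cref{linkLift} turns this link into a class-bijective lift $G \car_B (X, E)$, which is exactly what is claimed. I do not expect any real obstacle here: the substantive content is already contained in \Cref{finiteLinkExt} (hence \Cref{hfLink}), and the only thing left to check is the routine observation that each $E^{\vee G_n}$ has finite index over $E$.
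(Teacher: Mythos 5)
Your proof is correct and is exactly the argument the paper intends: the paper's proof of this corollary is just ``Immediate from \Cref{hfLink}'', and your write-up fills in the routine details (writing $G$ as an increasing union of finite subgroups, noting $E \tl E^{\vee G}$ and that each $E^{\vee G_n}$ has finite, indeed bounded, index over $E$, so $E^{\vee G}/E$ is hyperfinite, then applying \Cref{hfLink} and \Cref{linkLift}). No gaps; same approach.
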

\begin{proof}
    Immediate from \Cref{hfLink}.
\end{proof}

\section{Outer actions of amenable groups}
\label{sectionAmenable}

Our goal in this section is to show that
every outer action of an amenable group lifts.
We will prove in \ref{specialCases} some special cases of this result,
using (as a black box) \cite[Theorem 3.4]{FSZ89}
(stated in \Cref{amenablePmp} below).
The general case,
which is based on some ideas from the proof of
\Cref{amenablePmp} in combination with \Cref{compressibleCb}
will be proved in \ref{generalCase}.

\subsection{Special cases}
\label{specialCases}

We will use the following result from the pmp setting:
\begin{thm}[ {\cite[Theorem 3.4]{FSZ89}}]\label{amenablePmp}
    Let $G$ be an amenable group and let $E$ be a pmp ergodic CBER.
    Then any morphism $G\to\Out_\mu(E)$ has a lift.
\end{thm}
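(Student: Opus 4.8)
The plan is to reduce \Cref{amenablePmp} to a statement about invariant measures on an appropriate Borel space and then invoke the quasi-tiling machinery of Ornstein--Weiss. Concretely, suppose we are given an amenable group $G$ and a pmp ergodic CBER $E$ on $(X,\mu)$, together with a morphism $G\to\Out_\mu(E)$; fix for each $g\in G$ a lift $T_g\in\Aut_\mu(E)$. The failure of $g\mapsto T_g$ to be a homomorphism is measured by a Borel map $c(g,h):=T_g T_h T_{gh}^{-1}\in\Inn_\mu(E)$, which is a cocycle for the action. A genuine lift exists precisely when one can correct each $T_g$ by an element $S_g\in\Inn_\mu(E)$ so that $g\mapsto S_g T_g$ is a homomorphism; equivalently, one must solve the cohomological equation $S_{gh}=S_g\cdot{}^{T_g\!}(S_h)\cdot c(g,h)^{-1}$ with values in the full group $\Inn_\mu(E)$. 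So the task is an untwisting / vanishing-of-cohomology statement for $\Inn_\mu(E)$-valued cocycles of amenable group actions.

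The key steps I would carry out are as follows. First, reformulate the problem so that $F:=E^{\vee G}$ is the ambient relation and the desired lift is an action $G\car_\mu(X,E)$ with $E^{\vee G}=F$ and each $g$ acting within its prescribed outer class; by \Cref{linkLift} this is the same as producing an $(E,F)$-link in the measured setting, i.e. a pmp subequivalence relation $L\subseteq F$ meeting every $E$-class in every $F$-class exactly once. Second, since $E$ is ergodic and $G$ is amenable, the relation $F$ is amenable and hyperfinite (Connes--Feldman--Weiss / Ornstein--Weiss), so one may exhaust $F$ by an increasing sequence of finite subequivalence relations. Third — and this is where the Ornstein--Weiss quasi-tiling theorem enters — one builds the link $L$ by an exhaustion argument: on larger and larger Rokhlin towers / quasi-tiles for the $G$-action one selects, $E$-class by $E$-class, a coherent system of representatives that is consistent with the already-chosen part, using the approximate invariance of the Følner tiles to make the consistency constraints satisfiable on all but an $\varepsilon$-fraction of the space at each stage, and then passing to a limit. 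The almost-invariance of quasi-tilings is exactly what lets the local combinatorial choices at stage $n$ be extended to stage $n+1$ with only a small, summable error, so that the partial links converge to an honest link mod null.

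The main obstacle I expect is precisely the coherence/gluing step: ensuring that the representative chosen inside one Følner tile agrees on overlaps with the choice forced by a neighbouring tile, while keeping the exceptional set small and summable. This is where one genuinely needs the quasi-tiling theorem (a single Rokhlin tower is not enough for a general amenable $G$, and one must handle the non-disjointness and non-covering defects of quasi-tiles), and where the ideas from the proof of \Cref{amenablePmp} in \cite{FSZ89} are used as a black box. A secondary subtlety is the reduction to the ergodic case for $F$: although $E$ is assumed ergodic, $F=E^{\vee G}$ need not be, so one should either work over the ergodic decomposition of $F$ and check that the link can be chosen measurably in the parameter, or reduce to a single ergodic component; either way this is routine given the measurable selection results already cited, but it must be stated carefully. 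Everything else — the cocycle reformulation, the passage to a hyperfinite exhaustion, and the final limiting argument — is bookkeeping once the quasi-tiling coherence estimate is in hand.
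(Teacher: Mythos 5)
There is a genuine gap, and in fact two of your steps fail. First, note that the paper does not reprove this statement at all: it is quoted from \cite{FSZ89}, and the only argument the paper supplies is the remark that an arbitrary morphism $\varphi\colon G\to\Out_\mu(E)$ reduces to the free case of \cite{FSZ89} by passing to $G/\ker\varphi$ (ergodicity of $E$ makes freeness of the outer action equivalent to triviality of the kernel). Since your own sketch ultimately invokes ``the ideas from the proof of \Cref{amenablePmp} in \cite{FSZ89} \dots as a black box'' for exactly the coherence step that constitutes the theorem, the honest version of your proof is just that reduction; instead you attempt to reconstruct the argument, and the reconstruction goes wrong. The central error is the claim that, because $G$ is amenable and $E$ is ergodic, $F=E^{\vee G}$ is amenable and hyperfinite by Connes--Feldman--Weiss. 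This is false: $E$ is an arbitrary pmp ergodic CBER (it could be induced by a free pmp action of a property (T) group), and $F\supseteq E$, so $F$ need not be amenable; amenability of the acting group on the quotient does not pass to $F$. Consequently the proposed exhaustion of $F$ by finite subequivalence relations, on which your whole gluing scheme rests, is unavailable. The actual argument (both in \cite{FSZ89} and in the paper's Borel analogue, \Cref{amenableLift}) quasi-tiles the \emph{group} $G$ by F\o lner sets and assembles approximately multiplicative partial lifts using equidecomposability of sets of equal measure (the analogues of \Cref{equivalentSets} and \Cref{disjointLifts}), with a Borel--Cantelli limit; no finiteness or hyperfiniteness of $F$ is used, and there is no genuine $G$-action on $X$ available to produce ``Rokhlin towers'' before the lift is built.

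The second error is your reduction of ``lift'' to ``$(E,F)$-link'': \Cref{linkLift} characterizes \emph{class-bijective} lifts, which is a strictly stronger conclusion than the one claimed in \Cref{amenablePmp}; whether amenable outer actions admit class-bijective lifts is precisely \Cref{probAmenableCb}, which the paper leaves open (in the Borel setting). In the ergodic pmp situation one can recover class-bijectivity, but only by first quotienting by $\ker\varphi$ and using freeness of the induced outer action of $G/\ker\varphi$ --- i.e., the paper's remark --- and your sketch never makes that argument. Finally, your ``secondary subtlety'' about $F$ possibly failing to be ergodic is vacuous: any $F$-invariant set is $E$-invariant, so $F\supseteq E$ is automatically ergodic; the real non-ergodicity issues arise only in the Borel/non-ergodic generalization treated elsewhere in the paper, not here. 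The cocycle reformulation in your first paragraph is fine and standard, but by itself it does not constitute progress toward the untwisting, which is the entire content of the theorem.
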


\begin{rmk}
    In \cite{FSZ89} this result is stated for free outer actions,
    i.e., outer actions $\varphi\colon G\to\Out_\mu(E)$
    that have the following additional property:
    if $g\in G$ is not the identity and $T_g\in \Aut_\mu (E)$
    maps by the canonical projection to $\varphi (g)$,
    then $T_g (x) \notin [x]_E$, a.e.
    Using the ergodicity of $E$,
    this is equivalent to the kernel of $\varphi$ being trivial.
    Thus for an arbitrary outer action $\varphi\colon G\to\Out_\mu(E)$,
    if $H$ is the kernel of $\varphi$,
    this gives a free outer action of $G/H$,
    which by the special case lifts to an action of $G/H$
    which composed with the projection of $G$ to $G/H$
    gives a lifting of $\varphi$.
\end{rmk}

\begin{rmk}
    Note that (the measurable version of)
    \Cref{finiteAmalgamLift}
    gives examples of non-amenable groups
    that satisfy \Cref{amenablePmp}.
\end{rmk}

Now \Cref{amenablePmp} together with \Cref{compressibleCb}
implies the following Borel result:
\begin{thm}\label{amenableLiftUnique}
    Let $G$ be an amenable group
    and let $E$ be a uniquely ergodic CBER.
    Then every morphism $G\to\Out_B(E)$ lifts.
\end{thm}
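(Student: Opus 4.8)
The strategy is to split $X$ into two $E^{\vee G}$-invariant Borel pieces: a $\mu$-conull part on which the measurable lift furnished by \Cref{amenablePmp} can be upgraded to an honest Borel lift, and its complement, which is $\mu$-null and hence compressible, so that \Cref{compressibleCb} applies. To set up, write $\EINV_E=\{\mu\}$; by the Ergodic Decomposition Theorem $\INV_E=\{\mu\}$ as well and $E$ is $\mu$-ergodic. A key point is that \emph{every} $T\in\Aut_B(E)$ is $\mu$-preserving, since the normalizer action of $\Aut_B(E)$ on probability measures preserves $\INV_E=\{\mu\}$. Given $\varphi\colon G\to\Out_B(E)$, fix for each $g$ some $T_g\in\Aut_B(E)$ with $p_E(T_g)=\varphi(g)$, so the induced action $G\car_B X/E$ satisfies $g\cdot[x]_E=[T_g(x)]_E$. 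As each $T_g$ preserves $\mu$, the countable group $\Gamma$ generated by a countable group inducing $E$ together with $\{T_g:g\in G\}$ induces $E^{\vee G}$ and preserves $\mu$; in particular $\mu\in\INV_{E^{\vee G}}$, so the $E^{\vee G}$-saturation of any $\mu$-null Borel set is $\mu$-null.

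Next, apply \Cref{amenablePmp} to the composite $G\xrightarrow{\varphi}\Out_B(E)\to\Out_\mu(E)$ (well-defined by the previous paragraph) to obtain a lift $G\to\Aut_\mu(E)$, and choose genuine $\mu$-preserving Borel automorphisms $S_g$ of $X$ representing its values. Then
\[
    A=\{x:S_1(x)\neq x\}\cup\bigcup_{g,h}\{x:S_g(S_h(x))\neq S_{gh}(x)\}\cup\bigcup_g\{x:[S_g(x)]_E\neq[T_g(x)]_E\}
\]
is $\mu$-null, so $Y:=[A]_{E^{\vee G}}$ is a $\mu$-null, $E^{\vee G}$-invariant Borel set and $Z:=X\setminus Y$ is $\mu$-conull, $E^{\vee G}$-invariant and Borel. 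For $x\in Z$ and $g\in G$ we have $S_g(x)\E T_g(x)\E^{\vee G}x$, hence $S_g(x)\in Z$; using the group law off $A$ one checks that $S_g\uhr Z$ is a Borel automorphism of $E\uhr Z$ with inverse $S_{g^{-1}}\uhr Z$, that $g\mapsto S_g\uhr Z$ is a homomorphism, and that $[S_g(x)]_E=[T_g(x)]_E=g\cdot[x]_E$. Thus $g\mapsto S_g\uhr Z$ is a Borel lift of $\varphi\uhr(Z/E)$.

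It remains to handle $Y$. Since $Y$ is $E$-invariant with $\mu(Y)=0$, the relation $E\uhr Y$ admits no invariant probability measure — such a measure, extended by $0$, would be an $E$-invariant probability measure different from $\mu$ — so $E\uhr Y$ is compressible by Nadkarni's Theorem. As $Y$ is $E^{\vee G}$-invariant, the given action restricts to a Borel action $G\car_B Y/(E\uhr Y)$, which by \Cref{compressibleCb} has a lift. Gluing the lifts on $Z$ and on $Y$ along the partition $X=Z\sqcup Y$ into $E$-invariant Borel sets produces the required lift $G\car_B(X,E)$ of $\varphi$.

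The technical heart of the argument is the passage from the mod-null lift to a genuine Borel action, i.e., the fact that the exceptional set $A$ has a $\mu$-null $E^{\vee G}$-invariant hull; this rests precisely on unique ergodicity, which makes every element of $\Aut_B(E)$ $\mu$-preserving and thereby forces $\mu\in\INV_{E^{\vee G}}$. Everything else is bookkeeping, modulo citing \Cref{amenablePmp} and \Cref{compressibleCb} as black boxes.
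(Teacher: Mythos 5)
Your proposal is correct and follows essentially the same route as the paper: use unique ergodicity to see that $\Aut_B(E)$ preserves $\mu$, invoke \Cref{amenablePmp} to get a lift in $\Aut_\mu(E)$, realize it as a genuine Borel lift on a conull invariant set, and handle the null (hence compressible) complement via \Cref{compressibleCb}. The extra bookkeeping you supply (the $E^{\vee G}$-invariant saturation of the exceptional set and the verification that $\mu\in\INV_{E^{\vee G}}$) is exactly the detail the paper leaves implicit.
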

\begin{proof}
    Let $\mu$ be the ergodic invariant measure for $E$.
    Note that any element of $\Aut_B(E)$ preserves $\mu$ by unique ergodicity.
    Thus by \Cref{amenablePmp},
    there is a lift $G\to \Aut_\mu(E)$,
    so there is a conull $E$-invariant Borel set $Y\subseteq X$
    such that $G\to\Out_B(E\uhr Y)$ lifts to $\Aut_B(E\uhr Y)$.
    But since the complement is compressible,
    we are done here by \Cref{compressibleCb}.
\end{proof}

In fact the following stronger result holds.
\begin{thm}\label{amenableLiftCountable}
    Let $G$ be an amenable group
    and let $E$ be a countably ergodic CBER.
    Then every morphism $G\to\Out_B(E)$ lifts.
\end{thm}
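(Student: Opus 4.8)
The plan is to reduce the countably ergodic case to the uniquely ergodic case (\Cref{amenableLiftUnique}) by decomposing the space along the ergodic components, but with the essential subtlety that we cannot perform a single \emph{Borel} splitting of $X$ into the ergodic components (they need not be separated by an invariant Borel set) and the outer action $G \to \Out_B(E)$ need not respect them in any naive way. So first I would set things up measure-theoretically: let $\EINV_E = \{e_n\}_{n}$ be the (finite or countable) list of ergodic invariant measures, and note that by the Ergodic Decomposition Theorem there is a Borel $E$-invariant map $\pi \colon X \to \EINV_E$ with ergodic components $X_n = \pi^{-1}(\{e_n\})$. Crucially, every $T \in \Aut_B(E)$ permutes $\INV_E$ by pushforward, and since $T$ takes ergodic measures to ergodic measures, it induces a permutation $\sigma_T$ of the index set, and the uniqueness clause of the Ergodic Decomposition Theorem says $T(X_n) \mathbin{\triangle} X_{\sigma_T(n)}$ is compressible (differs from an honest invariant set by a compressible piece). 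Thus $G$ acts on $\EINV_E$; let $G_0 \le G$ be the (finite-index-free, but that's irrelevant) subgroup fixing each $e_n$ — actually it is cleaner to let $G$ act on the countable set $\EINV_E$ and pass to orbits.

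The key step is then: on each $G$-orbit $\mathcal O \subseteq \EINV_E$, the union $X_{\mathcal O} = \bigcup_{e \in \mathcal O} X_e$ is, up to a compressible error, an $E^{\vee G}$-invariant Borel set, and the induced outer action of $G$ on $X_{\mathcal O}/E$ has the property that the stabilizer $G_e \le G$ of a point $e \in \mathcal O$ acts on the uniquely ergodic CBER $E \uhr X_e$. I would handle a single orbit at a time. Fixing a basepoint $e \in \mathcal O$ with stabilizer $H = G_e$ (of possibly infinite index, but $H$ is still amenable), the restriction $G \to \Out_B(E \uhr X_{\mathcal O})$ should be ``induced'' from an outer action $H \to \Out_B(E \uhr X_e)$ in a way analogous to the argument in \Cref{subgroupClosed}: choosing coset representatives and the compressibility-correcting maps furnished by the Ergodic Decomposition uniqueness clause, one writes $E \uhr X_{\mathcal O}$ as (a compressible modification of) $\bigoplus_{G/H} (E \uhr X_e)$. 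Since $E \uhr X_e$ is uniquely ergodic and $H$ is amenable, \Cref{amenableLiftUnique} lifts $H \to \Out_B(E \uhr X_e)$ to $\Aut_B(E \uhr X_e)$; then the standard induction of a lift along $H \le G$ (exactly the mechanism of \Cref{subgroupClosed}) produces a lift $G \to \Aut_B(\bigoplus_{G/H}(E \uhr X_e))$, and absorbing the compressible discrepancy via \Cref{compressibleCb} gives a genuine lift on $X_{\mathcal O}$ (not necessarily class-bijective — just a lift, which is all that is claimed). Finally, summing over the countably many $G$-orbits $\mathcal O$ in $\EINV_E$, and lifting on the leftover compressible part (the set where $\pi$ disagrees with the chosen Borel representatives, plus the finite part if any) by \Cref{compressibleCb}, assembles a lift $G \to \Aut_B(E)$ on all of $X$.

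The main obstacle I anticipate is entirely in making the ``compressible modification'' bookkeeping precise and Borel: the ergodic components $X_e$ are genuinely not $E^{\vee G}$-invariant Borel sets, only invariant-up-to-compressible, and one must check that (i) the union over a $G$-orbit of components differs from a Borel $E^{\vee G}$-invariant set by a compressible set, (ii) the coset-representative maps realizing $E \uhr X_{\mathcal O} \cong_B \bigoplus_{G/H}(E\uhr X_e)$ can be chosen Borel and with compressible error, and (iii) when one lifts on the compressible error set via \Cref{compressibleCb} and glues to the lift on the invariant core, the two genuinely patch into a group action, not merely a collection of maps — this gluing is the same kind of step as in the proof of \Cref{amenableLiftUnique} and should go through, but it requires care that the compressible exceptional set can be taken $E^{\vee G}$-invariant so that the glued maps form a $G$-action. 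A secondary point to verify is that $H = G_e$, being a subgroup of the amenable group $G$, is amenable, so \Cref{amenableLiftUnique} genuinely applies; this is immediate. Everything else — the permutation action of $\Aut_B(E)$ on $\EINV_E$, the uniqueness of ergodic decomposition, the closure of $\Out_B$-lifting under the $\bigoplus_{G/H}$ induction — is either recalled in \Cref{sectionBorelActions} or is the content of \Cref{subgroupClosed} and \Cref{compressibleCb}.
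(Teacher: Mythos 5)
Your proposal is correct and follows essentially the same route as the paper's proof: act on the countably many ergodic components modulo compressible sets, pass to a single $G$-orbit, apply the uniquely ergodic case (\Cref{amenableLiftUnique}) to the stabilizer $H$ of a basepoint component, induce the lift to $G$ via coset representatives, and absorb all compressible discrepancies with \Cref{compressibleCb}. The bookkeeping you flag (the exceptional set being $E^{\vee G}$-invariant and compressible, and the induced action formula on the orbit) is exactly what the paper's short argument does with the explicit definition $g\cdot (T_s y) := T_t((t^{-1}gs)\cdot y)$.
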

\begin{proof}
    Note that $G$ acts on the ergodic components modulo compressible sets,
    which we can ignore by \Cref{compressibleCb}.
    We can assume that this action is transitive.
    Fix an ergodic component $Y$,
    and let $H = \{g\in G:g\cdot Y = Y\}$.
    By the uniquely ergodic case,
    there is a lift $H\to\Aut_B(E\uhr Y)$.
    Let $S\subseteq G$ be a transversal
    for the left cosets of $H$ in $G$,
    with $1\in S$.
    For every $s\in S$,
    choose a lift $T_s\in \Aut_B(E)$,
    with $T_1 = \id_X$.
    Now fix $g\in G$ and $s\in S$.
    We define the action of $g$ on $sY$.
    We have $gsY = tY$ for some $t\in S$,
    so we have $t^{-1}gs\in H$.
    Thus we can define
    \[
        g\cdot (T_s y) := T_t((t^{-1}gs)\cdot y).
    \]
\end{proof}

\subsection{$E$-null sets}
Let $E$ be an aperiodic CBER on $X$,
so that every $\mu \in \EINV_E$ is non-atomic.
A Borel subset $A\subseteq X$ is \textbf{$E$-null}
if either of the following equivalent conditions holds:
\begin{enumerate}[label=(\arabic*)]
    \item $\mu(A) = 0$ for every $\mu \in \EINV_E$.
    \item $E\uhr[A]_E$ is compressible.
\end{enumerate}
An \textbf{$E$-conull} set is the complement of an $E$-null set.

Let $\NULL_E \subseteq \B(X)$ be the $\sigma$-ideal of $E$-null Borel sets,
and let $\ALG_E$ be the quotient $\sigma$-algebra $\B(X)/\NULL_E$.
A Borel map $T \colon X \to X$ is \textbf{$\NULL_E$-preserving}
if the preimage under $T$ of every $E$-null set is $E$-null.
Let $\End_{\NULL_E}(E)$ be the monoid of
$\NULL_E$-preserving Borel maps $X\to X$
such that $x \E y \implies \varphi(x) \E \varphi(y)$
for all $x, y$ in an $E$-conull set,
where two such maps are identified if they agree on an $E$-conull set.
Let $\Aut_{\NULL_E}(E)$ be the group of invertible elements
of $\End_{\NULL_E}(E)$.
There is a natural action of $\Aut_{\NULL_E}(E)$ on $\ALG_E$.
Denote by $\Inn_{\NULL_E}(E)$ the normal subgroup of $\Aut_{\NULL_E}(E)$
of $\varphi$ such that $\varphi (x) \E x$ for an $E$-conull set of $x$,
and denote by $\Out_{\NULL_E}(E)$ the quotient group
$\Aut_{\NULL_E}(E)/\Inn_{\NULL_E}(E)$.

Lifts of elements of $\Out_{\NULL_E}(E)$
are defined analogously as in the case of $\Out_B(E)$,
as well as lifts of morphisms $G \to \Out_{\NULL_E}(E)$.
Let $G \to \Aut_{\NULL_E}(E)$ be a morphism.
Let $G \to \Out_{\NULL_E}(E)$.
There is an action on $X/E$ given by
\[
    g \cdot [x]_E = [T(x)]_E
\]
where $T$ is a lift of $g$,
which is well-defined for an $E$-conull set of $x$.
Then $\Stab_G([x]_E)$ is well-defined for an $E$-conull set of $x$.
We say that this is a \textbf{free action}
if $\Stab_G([x]_E) = 1$ for an $E$-conull set of $x$.
A morphism $G \to \Aut_{\NULL_E}(E)$ is \textbf{class-bijective}
if for every $g \in G$,
there is an $E$-conull set of $x$ such that
$\Stab_G(x) = \Stab_G([x]_E)$
(note that $\Stab_G(x)$ is also well-defined for an $E$-conull set of $x$).
Links are defined as before,
except that everything only needs to hold on an $E$-conull set.

Given $g\in\Out_{\NULL_E}(E)$,
a \textbf{partial lift} $\psi$ of $g$
is the restriction of a lift $\phi$ of $g$ to some $A \in \ALG_E$.
In this case, we write $\psi \colon A \to B$,
where $B = \phi(A)$.

There is a commutative diagram
\[
    \begin{tikzcd}
        1 \rar & \Inn_B(E) \rar \dar & \Aut_B(E) \rar \dar
        & \Out_B(E) \rar \dar & 1 \\
        1 \rar & \Inn_{\NULL_E}(E) \rar & \Aut_{\NULL_E}(E) \rar
        & \Out_{\NULL_E}(E) \rar & 1
    \end{tikzcd}
\]
In particular,
any morphism $G \to \Out_B(E)$
induces a morphism $G \to \Out_{\NULL_E}(E)$.
\begin{prop}\label{liftModNull}
    Let $E$ be an aperiodic CBER on $X$,
    let $G$ be a countable group
    and fix a morphism $G \to \Out_B(E)$.
    Then the following are equivalent:
    \begin{enumerate}[label=(\arabic*)]
        \item \label{actualLift}
            $G \to \Out_B(E)$ lifts.
        \item \label{almostLift}
            $G \to \Out_{\NULL_E}(E)$ lifts.
    \end{enumerate}
\end{prop}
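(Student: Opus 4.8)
The direction \ref{actualLift} $\implies$ \ref{almostLift} is immediate from the commutative diagram relating $\Out_B(E)$ and $\Out_{\NULL_E}(E)$: a lift $G \to \Aut_B(E)$ composes with $\Aut_B(E) \to \Aut_{\NULL_E}(E)$ to give a lift $G \to \Aut_{\NULL_E}(E)$. So the content is the converse.

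For \ref{almostLift} $\implies$ \ref{actualLift}, the plan is to start from a lift $\tilde\varphi \colon G \to \Aut_{\NULL_E}(E)$ and promote it to a genuine Borel lift on an $E$-conull invariant set, then handle the $E$-null remainder using \Cref{compressibleCb}. First I would observe that, by definition of $\Aut_{\NULL_E}(E)$, each $\tilde\varphi(g)$ is represented by an actual Borel automorphism $T_g$ of $E$ defined on an $E$-conull Borel set, and the cocycle-type identities $T_{gh} = T_g T_h$ and $T_1 = \id$ hold only modulo $E$-null sets. The goal is to repair these identities on a single $E$-conull $G$-invariant Borel set. Since $G$ is countable, the union over all pairs $(g,h)$ of the $E$-null sets where $T_{gh} \ne T_g T_h$, together with the set where $T_1 \ne \id$ and the sets where $T_g$ fails to be defined or bijective, is still $E$-null; let $A$ be its $E^{\vee G}$-saturation, which is $E$-null since being $E$-null is closed under $E^{\vee G}$-saturation (an $E$-null set has compressible $E$-saturation, and $E^{\vee G}$ is built from $E$ and finitely-acting group elements, so this needs the observation that a countable union of translates of a compressible-saturation set still has compressible $E^{\vee G}$-saturation — this uses that $E \subseteq E^{\vee G}$ and that images of $E$-null sets under the $T_g$ are $E$-null, which holds precisely because the $T_g$ are $\NULL_E$-preserving). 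Then on $Y := X \setminus A$, which is $E^{\vee G}$-invariant and $E$-conull, the maps $T_g \uhr Y$ genuinely restrict to Borel automorphisms of $E \uhr Y$ and satisfy the group law on the nose, giving an honest action $G \car_B (Y, E\uhr Y)$ lifting $G \to \Out_B(E \uhr Y)$.

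It remains to extend this lift across the $E$-null piece. Write $Z = [A]_{E^{\vee G}}$ (so $X = Y \sqcup Z$ up to adjusting $A$ to be exactly this saturation); $Z$ is $E^{\vee G}$-invariant and $E \uhr Z$ is compressible by definition of $E$-null. The morphism $G \to \Out_B(E)$ restricts to $G \to \Out_B(E \uhr Z)$ with $(E \uhr Z)^{\vee G} = E^{\vee G} \uhr Z$, and since $E \uhr Z$ is compressible, \Cref{compressibleCb} provides a class-bijective lift $G \car_B (Z, E \uhr Z)$. Combining the two lifts on the complementary invariant pieces $Y$ and $Z$ yields a Borel action $G \car_B (X, E)$, and it lifts $G \to \Out_B(E)$ because it does so on each piece. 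I expect the main obstacle to be the bookkeeping in the first step: verifying carefully that the bad set $A$ can be taken $E^{\vee G}$-invariant while remaining $E$-null, which rests on the fact that $\NULL_E$-preserving maps send $E$-null sets to $E$-null sets and that $E$-nullity is a countably additive notion stable under the relevant saturations — this is exactly the role played by the requirement that elements of $\Aut_{\NULL_E}(E)$ be $\NULL_E$-preserving rather than merely measure-class preserving for a single measure.
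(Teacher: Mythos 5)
Your proposal is correct and follows essentially the same route as the paper: choose Borel representatives $T_g$ of the lift in $\Aut_{\NULL_E}(E)$, note that the failure of the group law and of the lifting identities is confined to an $E$-null set, pass to an $E^{\vee G}$-invariant $E$-conull set (using that the $T_g$ are $\NULL_E$-preserving, so the relevant saturation stays $E$-null) where the $T_g$ give an honest lift, and handle the remaining compressible piece with \Cref{compressibleCb}. The only difference is that you spell out the saturation bookkeeping that the paper compresses into ``by taking the $E^{\vee G}$-hull.''
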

\begin{proof}
    \ref{actualLift} $\implies$ \ref{almostLift}
    Immediate.
    
    \ref{almostLift} $\implies$ \ref{actualLift}
    Denote the lift by $\varphi:G\to\Aut_{\NULL_E}(E)$,
    and denote by $\varphi_g\in\Aut_{\NULL_E}(E)$
    the image of $g$ under $\varphi$.
    For each $g\in G$,
    pick a representative $T_g:X\to X$ of $\varphi_g$.
    There is an $E$-conull subset $Y \subseteq X$ such that
    \begin{enumerate}[label=(\roman*)]
        \item $x\E y \iff T_g(x) \E T_g(y)$
            for every $g \in G$ and $x,y \in Y$,
        \item $T_1(x) = x$ for every $x \in Y$,
        \item $T_g(T_h(x)) = T_{gh}(x)$
            for every $g,h \in G$ and $x \in Y$,
        \item $[T_g(x)]_E = g\cdot [x]_E$
            for every $g \in G$ and $x \in Y$.
    \end{enumerate}
    By taking the $E^{\vee G}$-hull,
    we can assume that $Y$ is $E^{\vee G}$-invariant.
    Then the $T_g$ define a lift of $G\to\Out_B(E\uhr Y)$.
    On $X\setminus Y$,
    we have that $E$ is compressible,
    so we are done by \Cref{compressibleCb}.
\end{proof}

Every $\mu \in \EINV_E$ is a well-defined measure on $\ALG_E$,
and there is an action $\Aut_{\NULL_E}(E) \car \EINV_E$ given by
\[
    (\varphi\cdot\mu)(A) = \mu(\varphi^{-1}(A)),
\]
which descends to an action of $\Out_{\NULL_E}(E)$.

\begin{prop}\label{equivalentSets}
    Let $E$ be an aperiodic CBER,
    let $g \in \Out_{\NULL_E}(E)$,
    and let $A, B \in \ALG_E$.
    Then the following are equivalent:
    \begin{enumerate}[label=(\arabic*)]
        \item \label{sameMeasure}
            $\mu(A) = (g\cdot \mu)(B)$
            for every $\mu \in \EINV_E$.
        \item \label{partialLift}
            There is a partial lift $\varphi:A\to B$ of $g$.
        \item \label{totalLift}
            There is a lift $\varphi$ of $g$ with $\varphi(A) = B$.
    \end{enumerate}
\end{prop}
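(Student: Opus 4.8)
The plan is to prove $\ref{totalLift}\implies\ref{partialLift}\implies\ref{sameMeasure}\implies\ref{totalLift}$, the last implication being where essentially all the work lies. For $\ref{totalLift}\implies\ref{partialLift}$ there is nothing to do, since a lift $\varphi$ of $g$ with $\varphi(A)=B$ restricts to a partial lift $A\to B$ of $g$. For $\ref{partialLift}\implies\ref{sameMeasure}$, extend the given partial lift to a full lift $\phi$ of $g$ with $\phi(A)=B$; since $\phi\in\Aut_{\NULL_E}(E)$ is invertible modulo $\NULL_E$ we have $\phi^{-1}(B)=A$ in $\ALG_E$, and since the action $\Aut_{\NULL_E}(E)\car\EINV_E$ descends to $\Out_{\NULL_E}(E)$ we have $g\cdot\mu=\phi\cdot\mu$, so $(g\cdot\mu)(B)=(\phi\cdot\mu)(B)=\mu(\phi^{-1}(B))=\mu(A)$ for every $\mu\in\EINV_E$. (It is worth isolating here the fact that every $\psi\in\Inn_{\NULL_E}(E)$ fixes every $\mu\in\EINV_E$, which is exactly what makes the descended action well defined: writing $\psi$ as a countable Borel piecing-together of elements of a countable group generating $E$ and using the $E$-invariance of $\mu$ shows $\psi_*\mu=\mu$.)

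For $\ref{sameMeasure}\implies\ref{totalLift}$, fix an arbitrary lift $\phi_0$ of $g$ and put $B_0:=\phi_0(A)\in\ALG_E$. Using that $\EINV_E$ is invariant under the $\Out_{\NULL_E}(E)$-action and applying \ref{sameMeasure} with $g^{-1}\cdot\mu$ in place of $\mu$, one computes $\mu(B_0)=\mu(\phi_0(A))=(\phi_0^{-1}\cdot\mu)(A)=(g^{-1}\cdot\mu)(A)=\mu(B)$ for every $\mu\in\EINV_E$. Hence it suffices to prove the special case of the proposition in which $g$ is the identity, stated as follows: for Borel sets $P,Q\subseteq X$, there is $\psi\in\Inn_{\NULL_E}(E)$ with $\psi(P)=Q$ in $\ALG_E$ if and only if $\mu(P)=\mu(Q)$ for every $\mu\in\EINV_E$. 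Granting this, choose $\psi$ with $\psi(B_0)=B$; then $\phi:=\psi\circ\phi_0$ is a lift of $g$ (since $\psi$ is inner) with $\phi(A)=B$.

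It remains to establish this equidecomposition lemma modulo $\NULL_E$. The ``only if'' direction is the observation above that inner automorphisms fix all ergodic invariant measures. For ``if'': if $\INV_E=\varnothing$ then $E$ is compressible, every Borel set is $E$-null, and both sides hold vacuously; otherwise invoke the Ergodic Decomposition Theorem to obtain the $E$-invariant Borel surjection $\pi\colon X\thra\EINV_E$ with uniquely ergodic fibers $X_e=\pi^{-1}(e)$. The compressible part of $X$ is $e$-null for every $e$, so modulo $e$-null sets each $E\uhr X_e$ is an ergodic, aperiodic pmp CBER with invariant measure $e$, and by hypothesis $e(P\cap X_e)=e(Q\cap X_e)$. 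The classical theorem that in an ergodic pmp CBER two Borel sets of equal measure are equidecomposable --- apply it to $P\cap X_e$ and $Q\cap X_e$, then to their complements, and splice the two equidecompositions --- then produces, on each component, an element of the full group of $E\uhr X_e$ that preserves $e$ and sends $P\cap X_e$ to $Q\cap X_e$. The step I expect to need the most care is carrying out this construction Borel-uniformly in $e$: one runs the usual exhaustion argument for equidecomposition relative to a fixed countable group generating $E$, checking that each choice can be made Borel in $e$, so that the components' automorphisms assemble into a single $\psi\in\Inn_{\NULL_E}(E)$ (defined, say, as the identity on the compressible part of $X$). Everything else is bookkeeping with the action on $\EINV_E$.
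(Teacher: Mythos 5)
Your argument is correct and follows the same skeleton as the paper's proof: the implications among \ref{partialLift}, \ref{totalLift} and \ref{totalLift} $\implies$ \ref{sameMeasure} are immediate from the definitions, and \ref{sameMeasure} $\implies$ \ref{totalLift} is reduced to the case $g = 1$ by composing with a fixed lift of $g$ (you push $A$ forward by $\phi_0$, the paper pulls $B$ back by $\psi^{-1}$; this is the same move). The only divergence is in the $g=1$ case: the paper simply cites \cite[Lemma 7.10]{KM04} and the remark following it, whereas you sketch a proof via the Ergodic Decomposition Theorem, applying the classical ergodic pmp equidecomposition result fiberwise and asserting that the construction can be made Borel-uniform in $e \in \EINV_E$. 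That uniformization is exactly the nontrivial content hidden in the cited lemma, and your fiberwise phrasing makes it look more delicate than it needs to be: the cleaner route is to run the exhaustion argument globally (fix a countable group generating $E$, greedily match $P$ into $Q$ along group elements, do the same for the complements, and splice); the leftover pieces $P', Q'$ then satisfy $[P']_E \cap [Q']_E = \varnothing$, and the hypothesis $\mu(P)=\mu(Q)$ for all $\mu \in \EINV_E$ together with ergodicity forces both leftovers to be $E$-null, yielding the desired $\psi \in \Inn_{\NULL_E}(E)$ with no need to uniformize over the ergodic decomposition. So there is no gap in substance, but the one step you flag as needing care is precisely the step the paper outsources, and it is most smoothly handled by the global argument rather than component-by-component.
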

\begin{proof}
    \ref{partialLift} $\iff$ \ref{totalLift}
    By definition.
    
    \ref{totalLift} $\implies$ \ref{sameMeasure}
    Immediate.
    
    \ref{sameMeasure} $\implies$ \ref{totalLift}
    Let $\psi$ be a lift of $g$.
    Then $\mu(A) = (g \cdot \mu)(B) = \mu(\psi^{-1}(B))$,
    so by replacing $B$ with $\psi^{-1}(B)$,
    we can assume that $g = 1$.
    Then the result follows from \cite[Lemma 7.10]{KM04}
    and the remark following it.
\end{proof}

A family $(\varphi_n)_n$ of partial maps is \textbf{disjoint}
if the family $(\dom \varphi_n)_n$ is disjoint
and the family $(\cod \varphi_n)_n$ is disjoint.
\begin{prop}\label{disjointLifts}
    Let $E$ be an aperiodic CBER,
    fix a morphism $G \to \Out_{\NULL_E}(E)$,
    and let $g \in G$.
    If $(\varphi_n)_n$ are disjoint partial lifts of $g$,
    then $\bigsqcup_n \varphi_n$ is a partial lift of $g$.
\end{prop}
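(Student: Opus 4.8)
The plan is to reduce everything to the measure-theoretic criterion of \Cref{equivalentSets}. Write $A_n=\dom\varphi_n$ and $B_n=\cod\varphi_n$, so that $\varphi_n\colon A_n\to B_n$, and put $A=\bigcup_n A_n$ and $B=\bigcup_n B_n$ in $\ALG_E$. By disjointness of the families $(A_n)_n$ and $(B_n)_n$, the map $\varphi:=\bigsqcup_n\varphi_n$ is a well-defined partial Borel map with $\dom\varphi=A$, $\cod\varphi=B$, injective on an $E$-conull set, and $\NULL_E$-preserving since each $\varphi_n$ is. Fix for each $n$ a lift $\phi_n\in\Aut_{\NULL_E}(E)$ of $g$ restricting to $\varphi_n$ on $A_n$; then $[\varphi_n(x)]_E=g\cdot[x]_E$ for an $E$-conull set of $x\in A_n$. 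In particular $\varphi$ preserves $E$ in both directions, even across distinct pieces: for $x\in A_m$ and $y\in A_n$ one has $\varphi(x)\E\varphi(y)\iff g\cdot[x]_E=g\cdot[y]_E\iff x\E y$.

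Next I would record the measure identities. Since each $\varphi_n$ is a partial lift of $g$, \Cref{equivalentSets} (\ref{partialLift} $\Rightarrow$ \ref{sameMeasure}) gives $\mu(A_n)=(g\cdot\mu)(B_n)$ for every $\mu\in\EINV_E$. Summing over $n$ and using countable additivity (legitimate as the $A_n$, resp.\ the $B_n$, are pairwise disjoint) yields $\mu(A)=(g\cdot\mu)(B)$, hence also $\mu(X\setminus A)=(g\cdot\mu)(X\setminus B)$, for every $\mu\in\EINV_E$. Applying \Cref{equivalentSets} (\ref{sameMeasure} $\Rightarrow$ \ref{partialLift}) to the pair $X\setminus A$, $X\setminus B$ produces a partial lift $\chi\colon X\setminus A\to X\setminus B$ of $g$; being the restriction of a lift, $\chi$ is $\NULL_E$-preserving and satisfies $[\chi(y)]_E=g\cdot[y]_E$ for an $E$-conull set of $y$.

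Now I would glue. Set $\psi:=\varphi\sqcup\chi$. Since $A\sqcup(X\setminus A)=X$ and $B\sqcup(X\setminus B)=X$ in $\ALG_E$, the map $\psi$ is a $\NULL_E$-preserving Borel bijection of $X$ modulo $E$-null sets; it preserves $E$ in both directions, within each of the two pieces by construction and across them by the computation above using $[\varphi(x)]_E=g\cdot[x]_E$ and $[\chi(y)]_E=g\cdot[y]_E$; and it induces on $X/E$ the transformation $[x]_E\mapsto g\cdot[x]_E$. Hence $\psi\in\Aut_{\NULL_E}(E)$, and $\psi$ induces the same transformation of $X/E$ as any lift of $g$, so $\psi$ differs from such a lift by an element of $\Inn_{\NULL_E}(E)$ and is therefore itself a lift of $g$. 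Since $\bigsqcup_n\varphi_n=\psi\uhr A$, it is a partial lift of $g$, as claimed.

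I expect the only genuine subtlety — and the point to be careful about — to be precisely that one must exhibit the \emph{particular} map $\bigsqcup_n\varphi_n$, rather than merely \emph{some} partial lift with domain $A$ and codomain $B$, as the restriction of an honest lift of $g$. This is what forces the explicit construction $\psi=\varphi\sqcup\chi$ and the (routine but necessary) verification that gluing $\varphi$ and $\chi$ along the partitions $\{A,X\setminus A\}$ and $\{B,X\setminus B\}$ preserves both $E$-compatibility and the property of inducing $g$ on $X/E$.
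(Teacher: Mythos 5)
Your proposal is correct and follows essentially the same route as the paper: apply \Cref{equivalentSets} to each $\varphi_n$ to get $\mu(A_n)=(g\cdot\mu)(B_n)$, use countable additivity to transfer this identity to the complements $X\setminus\bigsqcup_n A_n$ and $X\setminus\bigsqcup_n B_n$, invoke \Cref{equivalentSets} again to obtain a partial lift there, and glue to produce a total lift restricting to $\bigsqcup_n\varphi_n$. The extra verifications you spell out (that the glued map is in $\Aut_{\NULL_E}(E)$ and induces $g$) are exactly the routine checks the paper leaves implicit.
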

\begin{proof}
    Suppose $\varphi_n \colon A_n \to B_n$.
    Let $A = X\setminus\bigsqcup_n A_n$
    and let $B = X\setminus\bigsqcup_n B_n$.
    By \Cref{equivalentSets},
    for any $\mu \in \EINV_E$,
    we have $\mu(A_n) = (g \cdot \mu)(B_n)$,
    and thus $\mu(A) = (g \cdot \mu)(B)$.
    So again by \Cref{equivalentSets},
    there is a partial lift $\varphi \colon A \to B$ of $g$.
    Then $\varphi \sqcup \bigsqcup_n \varphi_n$ is a lift of $g$,
    and thus the restriction $\varphi_n$ is a partial lift of $g$.
\end{proof}

For $A \in \ALG_E$,
we write $\mu_E(A) = r$
if for every $\mu \in \EINV_E$,
we have $\mu(A) = r$.
Recall that for any standard probability space $(X, \mu)$,
if $A \subseteq X$ and $r \le \mu(A)$,
then there is some $B \subseteq A$ with $\mu(A) = r$,
and this $B$ can be found uniformly in $\mu$.
By applying this to each $E$-ergodic component,
we obtain the following:
\begin{prop}\label{smallerSubset}
    Let $E$ be an aperiodic CBER,
    let $A \in \ALG_E$,
    and let $r \in [0, 1]$.
    If $r \le \mu_E(A)$,
    then there is some $B \subseteq A$ such that $\mu_E(B) = r$.
\end{prop}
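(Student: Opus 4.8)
The plan is to reduce the statement to a fiberwise application, over the ergodic decomposition of $E$, of the quoted fact about standard probability spaces recalled just above. First dispose of degenerate cases: if $\EINV_E = \varnothing$ (equivalently, $E$ is compressible) then the conclusion $\mu_E(B) = r$ is vacuous and one may take $B = \varnothing$, so assume $\EINV_E \neq \varnothing$; fix a Borel representative of $A$, still written $A$, and set $r' := \mu_E(A)$, so $\mu(A) = r'$ for every $\mu \in \EINV_E$ and $r \le r'$. If $r' = 0$ then $r = 0$ and $B = \varnothing$ works, so assume $r' > 0$.

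Next, apply the Ergodic Decomposition Theorem of Farrell and Varadarajan to obtain the Borel, $E$-invariant surjection $\pi \colon X \thra \EINV_E$ with ergodic components $X_e := \pi^{-1}(\{e\})$, where $e(X_e) = 1$ and $e$ is the unique $E$-invariant probability measure concentrating on $X_e$. For each $e$, the space $(X_e, e)$ is a non-atomic standard probability space (non-atomicity uses aperiodicity of $E$) and $e(A \cap X_e) = e(A) = r' \ge r$. By the quoted fact there is a Borel $B_e \subseteq A \cap X_e$ with $e(B_e) = r$, and---this is the one point needing care---the uniform-in-$\mu$ form of that fact lets these choices be made Borel-uniformly in $e$, so that $B := \{x \in A : x \in B_{\pi(x)}\}$ is Borel. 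Concretely, one builds a single Borel map $h \colon X \to [0,1]$ whose restriction to each $A \cap X_e$ pushes $\frac{1}{r'}\,e\uhr(A \cap X_e)$ forward to Lebesgue measure on $[0,1]$ (a Borel parametrization of isomorphisms of non-atomic probability spaces, obtained by a standard measurable-selection argument), and puts $B := \{x \in A : h(x) \le r/r'\}$.

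Finally, for any $\mu \in \EINV_E$, ergodicity forces $\mu$ to concentrate on a single component $X_e$ with $\mu = e$, whence $\mu(B) = e(B \cap X_e) = e(B_e) = r$; thus $\mu_E(B) = r$, as required.

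The only step carrying real content is the Borel-uniformity of $e \mapsto B_e$, that is, the construction of $h$; everything else is bookkeeping around the ergodic decomposition. Since that uniformity is exactly the clause ``this $B$ can be found uniformly in $\mu$'' recalled in the statement's preamble, I expect the proposition to be essentially an immediate corollary, and the write-up to be short.
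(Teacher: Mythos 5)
Your proposal is correct and matches the paper's argument: the paper likewise treats this as an immediate consequence of the uniform-in-$\mu$ fact about non-atomic standard probability spaces, applied fiberwise over the ergodic decomposition. The extra care you take with Borel uniformity and the degenerate (compressible or null) cases is exactly the bookkeeping the paper leaves implicit.
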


\subsection{Quasi-tilings}

Let $G$ be a group.
Let $\Fin(G)$ denote the set of finite subsets of $G$,
and let $\Fin_1(G)$ denote the set of $A\in\Fin(G)$ containing $1$.
Given $A, B \in \Fin(G)$,
we say that $B$ \textbf{$\lambda$-covers} $A$
if $|A\cap B| \ge \lambda |A|$.

Let $\A$ be a family in $\Fin(G)$,
i.e., a subset of $\Fin(G)$.
We say that $\A$ is \textbf{$\varepsilon$-disjoint}
if there is a disjoint family $\{D_A\}_{A \in \A}$
such that each $D_A$ is a subset of $A$
which $(1 - \varepsilon)$-covers $A$.
Note that if $\A$ is $\varepsilon$-disjoint,
then
\[
    (1 - \varepsilon)\sum_{A\in \A}|A|
    \le \qty|\bigcup_{A\in \A} A|.
\]
Given $A\in \Fin(G)$,
we say that $\A$ \textbf{$\lambda$-covers} $A$
if $\bigcup_{B \in \A} B$ $\lambda$-covers $A$.

Let $\A$ be a family in $\Fin_1(G)$ and let $A \in \Fin(G)$.
An \textbf{$\A$-quasi-tiling} of $A$
is a tuple $\C = (C_B)_{B\in \A}$ of subsets of $A$
such that $Bc\subseteq A$ for every $c\in C_B$,
and the family $\{B C_B\}_{B\in \A}$ is disjoint.
If $1\in A$,
we additionally demand that $1\in C_B$ for some $B\in \A$.
If $\A = \{B\}$ is a singleton,
we will write ``$C$ is a $B$-quasi-tiling''
as shorthand to mean that $(C)$ is a $\{B\}$-quasi-tiling.
We say that $\C$ is \textbf{$\varepsilon$-disjoint}
if for each $B\in \A$,
the family $\{B c\}_{c\in C_B}$ is $\varepsilon$-disjoint.
We say that $\C$ \textbf{$\lambda$-covers} $A$
if $\{B C_B\}_{B \in \A}$ $\lambda$-covers $A$.
We say that $\C$ is an
\textbf{$(\A,\varepsilon)$-quasi-tiling} of $A$
if it is $\varepsilon$-disjoint and $(1 - \varepsilon)$-covers $A$.

Given $A \in \Fin(G)$ and $B \in \Fin_1(G)$,
let $T(A, B)$ denote the set $\{a\in A : Ba\subseteq A\}$.
We say that $A$ is \textbf{$(B, \varepsilon)$-invariant}
if $T(A, B)$ $(1 - \varepsilon)$-covers $A$.
Note that if $A$ is $(B, \varepsilon)$-invariant,
then $|B A| \le (1 + \varepsilon |B|)|A|$.
\begin{lem}\label{invariantCover}
    Let $G$ be group,
    let $\delta, \varepsilon > 0$,
    let $B\in \Fin_1(G)$,
    and let $A\in \Fin(G)$ be $(B, \delta)$-invariant.
    Then any maximal $\varepsilon$-disjoint family $\{Bc\}_{c\in C}$
    of right translates of $B$ contained in $A$
    $\varepsilon (1 - \delta)$-covers $A$.
\end{lem}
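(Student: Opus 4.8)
The plan is to run the standard greedy/maximality argument from the Ornstein--Weiss quasi-tiling machinery. Fix a maximal $\varepsilon$-disjoint family $\{Bc\}_{c\in C}$ of right translates of $B$ contained in $A$; here maximality means that there is no $c' \in G$ with $Bc' \subseteq A$ and $c' \notin C$ such that $\{Bc\}_{c\in C\cup\{c'\}}$ is still $\varepsilon$-disjoint. I want to show that $\bigcup_{c\in C} Bc$ $\varepsilon(1-\delta)$-covers $A$, i.e.\ that $|A\cap\bigcup_{c\in C}Bc|\ge\varepsilon(1-\delta)|A|$.

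First I would observe that by the definition of $\varepsilon$-disjointness applied to the singleton family $\A = \{B\}$ with quasi-tiling $C$, there is a disjoint family $\{D_{Bc}\}_{c\in C}$ with $D_{Bc}\subseteq Bc$ and $|D_{Bc}|\ge(1-\varepsilon)|Bc| = (1-\varepsilon)|B|$. The key step is to exploit maximality: I claim that $T(A,B) \subseteq \bigcup_{c\in C}(BB^{-1})c$, or more precisely, every element $a\in T(A,B)$ has the property that $Ba$ intersects $\bigcup_{c\in C}D_{Bc}$ in more than $\varepsilon|B|$ points. Indeed, if not, then $\{Bc\}_{c\in C}\cup\{Ba\}$ would be $\varepsilon$-disjoint (we could take $D_{Ba} = Ba\setminus\bigcup_{c\in C}D_{Bc}$, which then has size $>(1-\varepsilon)|B|$, while leaving the other $D_{Bc}$ unchanged since they are already pairwise disjoint and contained in the complement), contradicting maximality — here I use that $a\in T(A,B)$ guarantees $Ba\subseteq A$, so $Ba$ is an admissible new translate.

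Now I count. Since $A$ is $(B,\delta)$-invariant, $|T(A,B)|\ge(1-\delta)|A|$. For each $a\in T(A,B)$ we have $|Ba\cap\bigcup_{c\in C}D_{Bc}|>\varepsilon|B|$, equivalently $|Ba\cap E|>\varepsilon|B|$ where $E := \bigcup_{c\in C}D_{Bc}\subseteq A\cap\bigcup_{c\in C}Bc$. Summing over $a\in T(A,B)$ and using that each point of $E$ lies in $Ba$ for at most $|B|$ values of $a$ (since $x\in Ba\iff a\in B^{-1}x$), I get
\[
    \varepsilon|B|\cdot|T(A,B)| < \sum_{a\in T(A,B)}|Ba\cap E| \le |B|\cdot|E|,
\]
hence $|E|>\varepsilon|T(A,B)|\ge\varepsilon(1-\delta)|A|$. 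Since $E\subseteq A\cap\bigcup_{c\in C}Bc$, this gives $|A\cap\bigcup_{c\in C}Bc|\ge|E| > \varepsilon(1-\delta)|A|$, which is exactly the assertion that $\{Bc\}_{c\in C}$ $\varepsilon(1-\delta)$-covers $A$.

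The main obstacle — and the one place to be careful — is the maximality argument in the second paragraph: one must verify that appending the new translate $Ba$ genuinely preserves $\varepsilon$-disjointness of the \emph{whole} family, which requires that the previously chosen witnesses $D_{Bc}$ need no modification. This works precisely because $\varepsilon$-disjointness only demands the existence of \emph{some} disjoint subfamily of large subsets, so we may keep the old $D_{Bc}$ and only need the new $D_{Ba}$ to be disjoint from all of them and still $(1-\varepsilon)$-covering $Ba$; both follow from the assumed bound $|Ba\cap\bigcup_c D_{Bc}|\le\varepsilon|B|$. One should also double-check the edge cases where $A$ is small or $C=\varnothing$, but these are handled uniformly by the counting inequality.
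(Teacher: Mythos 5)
Your strategy is the same as the paper's: use maximality to show that every $g\in T(A,B)$ meets the tiled region in at least $\varepsilon|B|$ points, then double count using the fact that each point lies in at most $|B|$ right translates of $B$. The counting step is fine. The gap is in your key claim that \emph{every} $a\in T(A,B)$ satisfies $|Ba\cap\bigcup_{c\in C}D_{Bc}|>\varepsilon|B|$. The justification you give (append $Ba$ to the family, take $D_{Ba}=Ba\setminus\bigcup_c D_{Bc}$, contradict maximality) only applies when $Ba$ is genuinely a new translate, i.e.\ $Ba\neq Bc$ for all $c\in C$. But the excluded case is not an exotic corner: every $c\in C$ satisfies $Bc\subseteq A$, so $C\subseteq T(A,B)$, and for $a\in C$ there is no contradiction to extract, since the family does not change. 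For such $a$ all you can say is $|Ba\cap\bigcup_c D_{Bc}|\ge|D_{Ba}|\ge(1-\varepsilon)|B|$, which is $\ge\varepsilon|B|$ only when $\varepsilon\le 1/2$; for $\varepsilon\in(1/2,1]$ your claim, and with it your lower bound on $|E|=\bigcup_c D_{Bc}$, can fail, even though the lemma as stated still holds in that range. (In the paper's application $\varepsilon$ is small, so the damage is limited, but as written your proof of the stated lemma is incomplete.)

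The clean repair is what the paper does: measure the intersection with the full union $BC=\bigcup_{c\in C}Bc$ rather than with the disjointified witnesses. If $|Bg\cap BC|<\varepsilon|B|$ for some $g\in T(A,B)$, then in particular $Bg\not\subseteq BC$, so $Bg$ is automatically a new translate, and $D_{Bg}:=Bg\setminus BC$ is disjoint from every $D_{Bc}\subseteq BC$ and $(1-\varepsilon)$-covers $Bg$, contradicting maximality; no case split is needed. Hence $|Bg\cap BC|\ge\varepsilon|B|$ for all $g\in T(A,B)$, and your double count (each element of $BC$ lies in exactly $|B|$ right translates of $B$) gives
\[
    \varepsilon(1-\delta)|A|\le\varepsilon|T(A,B)|\le\sum_{g\in T(A,B)}\frac{|Bg\cap BC|}{|B|}\le|BC|,
\]
with $BC\subseteq A$, which is the desired covering. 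Alternatively you can keep your $E$-based count, but then you must treat $a\in C$ separately via $|Ba\cap E|\ge(1-\varepsilon)|B|$ and assume $\varepsilon\le 1/2$, which narrows the stated generality. (Also, the strict inequalities you carry are unnecessary: $\ge$ throughout suffices for $\lambda$-covering.)
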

\begin{proof}
     If $g \in T(A, B)$,
     then by maximality,
     we have $|Bg \cap BC| \ge \varepsilon |B|$.
     Thus
     \[
        \varepsilon (1 - \delta)|A|
        \le \varepsilon |T(A, B)|
        \le \sum_{g \in T(A, B)}\frac{|Bg \cap BC|}{|B|}
        \le \sum_{g \in G}\frac{|Bg \cap BC|}{|B|}
        = |BC|,
     \]
     where the last equality holds since every element of $BC$
     is contained in exactly $|B|$-many right translates of $B$.
\end{proof}

Let $\A$ be a finite family in $\Fin_1(G)$
and let $\p = (p_B)_{B\in \A}$ be
a probability distribution on $\A$.
Given an $\A$-quasi-tiling $\C = (C_B)_{B\in \A}$ of $A \in \Fin(G)$,
we say that $\C$ \textbf{satisfies} $\p$ if
$|B||C_B| \le p_B|A|$ for every $B\in \A$.
Given $\varepsilon > 0$,
we say that the pair $(\A, \varepsilon)$ \textbf{satisfies} $\p$
if there is some $\delta > 0$ such that
for every $A\in\Fin_1(G)$ larger than $\frac{1}{\delta}$
which is $(B, \delta)$-invariant and contains $B$ for every $B\in \A$,
there is an $(\A, \varepsilon)$-quasi-tiling of $A$ satisfying $\p$.

\begin{lem}\label{quasitilingInduction}
    Let $G$ be a group.
    For every $\varepsilon > 0$,
    there is a finite probability distribution
    $\p = (p_i)_{i < k}$
    and constants $\eta_i > 0$ for $i < k - 1$
    such that if $\A = (B_i)_{i < k}$ is a
    descending chain in $\Fin_1(G)$ where
    each $B_i$ for $i < k - 1$ is
    $(B_{i+1}^{-1}, \frac{\eta_i}{|B_{i+1}|})$-invariant,
    then $(\A, \varepsilon)$ satisfies $\p$.
\end{lem}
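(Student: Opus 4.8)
The plan is to build, for any sufficiently invariant $A$, an $(\A,\varepsilon)$-quasi-tiling by laying down right translates of $B_0,B_1,\dots,B_{k-1}$ greedily in that order (largest tile first) into the still-uncovered part of $A$. The one departure from the classical Ornstein--Weiss quasi-tiling lemma is that at each stage I would \emph{ration} the total mass used: at stage $i$ I cap $|B_i||C_{B_i}|$ by $|A|/k$, instead of taking a maximal $\varepsilon$-disjoint family. (A maximal family only bounds $\sum_i|B_i||C_{B_i}|$ by $\tfrac{1}{1-\varepsilon}|A|$, which is too large to give a genuine probability distribution.) Accordingly I would set $\p=(1/k,\dots,1/k)$, fix the disjointness parameter $\delta'=\varepsilon/2$ and the constants $\eta_i=\varepsilon/4$, take $k$ a large integer depending only on $\varepsilon$ (it will suffice that $k>4/\varepsilon^2$), and choose the witnessing constant $\delta$ last --- small enough in terms of $\varepsilon$ and $|B_0|$, which is allowed since $\delta$ may depend on $\A$.

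Given such an $A$, I would maintain the residual set $A_i=A\setminus\bigcup_{j<i}B_jC_{B_j}$ and run stages $i=0,\dots,k-1$: if $\bigcup_{j<i}B_jC_{B_j}$ already has size $\ge(1-\varepsilon)|A|$, set $C_{B_i}=\varnothing$; otherwise, take a maximal $\delta'$-disjoint family of right translates of $B_i$ contained in $A_i$ (for $i=0$, arranging that $B_0\cdot 1=B_0\subseteq A$ is among them, so that $1\in C_{B_0}$), and let $C_{B_i}$ index a sub-family whose total mass first lands in $[\,|A|/k-|B_i|,\ |A|/k\,)$. Because every chosen translate lies inside $A_i$, the sets $B_iC_{B_i}$ are automatically pairwise disjoint, each $\{B_ic\}_{c\in C_{B_i}}$ is $\delta'$- hence $\varepsilon$-disjoint, and $|B_i||C_{B_i}|\le|A|/k=p_i|A|$, so $\C=(C_{B_i})_{i<k}$ is an $\A$-quasi-tiling satisfying $\p$. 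The only real content is that $\C$ is $(1-\varepsilon)$-covering, for which I would need two estimates.

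The first is that the residual set stays invariant enough to feed \Cref{invariantCover}. Since $1\in B_i$, one has $T(A_i,B_i)=T(A,B_i)\setminus B_i^{-1}R$, where $R=\bigcup_{j<i}B_jC_{B_j}$. For $j<i$ the inclusion $B_i\subseteq B_{j+1}$ and the $(B_{j+1}^{-1},\eta_j/|B_{j+1}|)$-invariance of $B_j$ give $|B_i^{-1}B_j|\le(1+\eta_j)|B_j|$, hence $|B_i^{-1}(B_jC_{B_j})|\le|B_jC_{B_j}|+\eta_j|B_j||C_{B_j}|$; summing over $j<i$ and using $|R|<(1-\varepsilon)|A|$, $\sum_j|B_j||C_{B_j}|\le|A|$, the $(B_i,\delta)$-invariance of $A$, and $\delta,\eta_i\le\varepsilon/4$, this yields $|T(A_i,B_i)|\ge(\varepsilon/2)|A|$. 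Then \Cref{invariantCover} says the maximal family at stage $i$ covers at least $\delta'|T(A_i,B_i)|\ge(\varepsilon^2/4)|A|$, which for $k>4/\varepsilon^2$ exceeds $|A|/k$; in particular its mass exceeds $|A|/k$, so the windowed sub-family exists (also needing $|A|>k|B_i|$). The second estimate is the coverage count: at each non-vacuous stage the windowed sub-family is $\delta'$-disjoint with mass $\ge|A|/k-|B_0|$, so its (entirely new) union has size $\ge(1-\delta')(|A|/k-|B_0|)$; if the stopping rule never fires then the $k$ stages together cover $\ge(1-\delta')(|A|-k|B_0|)\ge(1-\varepsilon)|A|$ once $\delta'\le\varepsilon/2$ and $k|B_0|\le\tfrac\varepsilon2|A|$ --- and choosing $\delta$ so that $|A|>1/\delta$ forces both $|A|>k|B_0|$ and $k|B_0|\le\tfrac\varepsilon2|A|$ closes the argument.

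The hard part will be exactly this bookkeeping: the naive greedy argument proves covering but blows the mass budget, so one is forced to ration mass stage by stage and then re-prove that (a) enough fresh area is still gained each stage and (b) the relative invariance of the residual set --- which degrades every time a tile is carved out --- remains good enough for \Cref{invariantCover}; point (b) is precisely what the hypotheses ``$B_i$ is $(B_{i+1}^{-1},\eta_i/|B_{i+1}|)$-invariant'' buy us, propagated down the chain via $B_i\subseteq B_{i+1}\subseteq\cdots$. Once the relations between $\delta'$, the $\eta_i$, $\delta$ and the integer $k$ are pinned down, the rest is routine counting.
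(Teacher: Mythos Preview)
Your argument is correct and yields the lemma, but it diverges from the paper's proof in several interesting respects. Both proofs greedily lay down translates of $B_0,\ldots,B_{k-1}$ into the residual set $A_i$ and invoke \Cref{invariantCover}, but they manage the mass budget quite differently. The paper chooses the geometric subprobability $p_i=\varepsilon(1-\varepsilon)^i$ (then rescales), takes $k$ with $(1-\varepsilon)^k\le 2\varepsilon$ (so $k\sim\log(1/\varepsilon)/\varepsilon$), uses exponentially decaying $\eta_i\le\frac{1-2\varepsilon}{2\cdot 3^{k-i}}$, and maintains throughout the stronger invariant that $A_i$ is $(B_i,3^{-(k-i)})$-invariant \emph{relative to its own size}; this forces the rather delicate bookkeeping of the ratios $|B_iC_i|/|A|$ and $|A_i|/|A|$ within narrowing windows and two separate trimming steps. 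Your route instead fixes the uniform distribution $p_i=1/k$ with $k>4/\varepsilon^2$, constant $\eta_i=\varepsilon/4$, and only tracks the \emph{absolute} lower bound $|T(A_i,B_i)|\ge(\varepsilon/2)|A|$ rather than its ratio to $|A_i|$; the hard cap $|B_i||C_{B_i}|\le|A|/k$ then makes the distribution constraint automatic. The upshot is that your argument is shorter and the constants are easier to follow, while the paper's argument produces a smaller $k$ (logarithmic versus quadratic in $1/\varepsilon$), which is immaterial here but could matter in quantitative applications. One small remark: your appeal to \Cref{invariantCover} actually uses the sharper inequality $|BC|\ge\delta'\,|T(A_i,B_i)|$ extracted from its \emph{proof}, not merely the stated conclusion; this is fine but worth flagging explicitly.
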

\begin{proof}
    By scaling, it suffices to find a subprobability distribution.
    Choose $k$ such that $2\varepsilon \ge (1 - \varepsilon)^k$,
    define $p_i = \varepsilon (1 - \varepsilon)^i$,
    and for $i < k -1$,
    choose $\eta_i$ such that
    \[
        \eta_i \le \frac{1 - 2\varepsilon}{2\cdot 3^{k-i}}.
    \]
    
    Let $\A = (B_i)_{i < k}$ be a
    descending chain in $\Fin_1(G)$ where each $B_i$ is
    $(B_{i+1}^{-1}, \frac{\eta_i}{|B_{i+1}|})$-invariant,
    and let $\delta > 0$ be sufficiently small,
    depending on $(\A, \varepsilon)$,
    to be specified in the course of the proof.
    Suppose we have some $A\in\Fin_1(G)$ which is
    larger than $\frac{1}{\delta}$
    and $(B, \delta)$-invariant for every $B\in \A$.
    
    We define a descending sequence $(A_i)_{i < k}$ of subsets of $A$
    and $2\varepsilon$-disjoint $B_i$-quasi-tilings $C_i$ of $A_i$ such that
    \begin{enumerate}[label=(\roman*)]
        \item $A_0 = A$.
        \item \label{Ai}
            $A_{i+1} = A_i\setminus B_i C_i$,
        \item \label{AiInvariant}
            $A_i$ is $(B_i, \frac{1}{3^{k - i}})$-invariant,
        \item \label{ratioBiCiToA}
            \[
                \varepsilon (1 - \varepsilon)^{i + 2 - 2^{-i}}
                \le \frac{|B_i C_i|}{|A|}
                \le \varepsilon (1 - \varepsilon)^{i - 2 + 2^{-i}},
            \]
        \item \label{ratioAiToA}
            \[
                (1 - \varepsilon)^{i + 2 - 2^{-i + 1}}
                \le \frac{|A_i|}{|A|}
                \le (1 - \varepsilon)^{i - 2 + 2^{-i + 1}}.
            \]
    \end{enumerate}
    We proceed by induction,
    starting with $A_0 = A$,
    defining $C_i$ from $A_i$,
    and defining $A_{i+1}$ from $C_i$ via \ref{Ai}.
    Note that $A_0$ satisfies \ref{AiInvariant}
    if we require $\delta \le \frac{1}{3^k}$.
    
    Suppose that $A_i$ has been defined.
    We will define $C_i$.
    Let $\tilde C_i$ be a maximal $2\varepsilon$-disjoint
    $B_i$-quasi-tiling of $A_i$.
    Since $2\varepsilon \qty(1 - \frac{1}{3^{k - i}}) > \varepsilon$,
    by \Cref{invariantCover},
    $\tilde C_i$ is an $\varepsilon$-cover of $A_i$.
    Then by removing elements from $\tilde C_i$,
    we obtain a $B_i$-quasi-tiling
    $C_i\subseteq \tilde C_i$ of $A_i$
    such that
    \[
        \varepsilon (1 - \varepsilon)^{2^{-i}}
        \le \frac{|B_i C_i|}{|A_i|}
        \le \varepsilon (1 - \varepsilon)^{-2^{-i}}
    \]
    and
    \[
        (1 - \varepsilon)^{1 + 2^{-i}}
        \le \frac{|A_{i+1}|}{|A_i|}
        \le (1 - \varepsilon)^{1 - 2^{-i}},
    \]
    as long as $A_i$ is sufficiently large such that
    $\frac{|B_i|}{|A_i|}$ is smaller than
    the length of the interval around $\varepsilon$ given by
    \[
        \qty[\varepsilon (1 - \varepsilon)^{2^{-i}},
        \varepsilon (1 - \varepsilon)^{-2^{-i}}]
        \cap
        \qty[1 - (1 - \varepsilon)^{1 - 2^{-i}},
        1 - (1 - \varepsilon)^{1 + 2^{-i}}],
    \]
    which occurs for sufficiently large $A$ by \ref{ratioAiToA}.
    Then since $\frac{|B_i C_i|}{|A|}
    = \frac{|B_i C_i|}{|A_i|}\frac{|A_i|}{|A|}$,
    we get that \ref{ratioBiCiToA} holds.
    Similarly,
    \ref{ratioAiToA} holds for $A_{i+1}$.
    
    It remains to check \ref{AiInvariant}.
    Note that
    \[
        T(A_{i+1}, B_{i+1})
        = T(A_i, B_{i+1})
        \setminus B_{i+1}^{-1} B_i C_i.
    \]
    Since
    \[
        \frac{|A_{i+1}|}{|A_i|}
        \ge (1 - \varepsilon)^{1 + 2^{-i}}
        \ge (1 - \varepsilon)^2
        \ge \frac{1}{2},
    \]
    where we assume that $\varepsilon$ is small enough
    to satisfy the last inequality,
    the cardinality of $T(A_i, B_{i+1})$ is at least
    \[
        \qty(1 - \frac{1}{3^{k - i}}) |A_i|
        \ge |A_i| - \frac{2}{3^{k - i}} |A_{i + 1}|.
    \]
    Now $B_i C_i$ is
    $\qty(B_{i+1}^{-1},
    \frac{\eta_i}{|B_{i+1}|(1 - 2\varepsilon)})$-invariant,
    since
    \begin{align*}
        |\{g\in B_i C_i : B_{i + 1}^{-1} g\not\subseteq B_i C_i\}|
        & \le \sum_{c\in C_i}|\{g\in B_i c : B_{i + 1}^{-1} g\not\subseteq B_i C_i\}| \\
        & \le \sum_{c\in C_i}|\{g\in B_i c : B_{i + 1}^{-1} g\not\subseteq B_i c\}| \\
        & \le \sum_{c\in C_i}\frac{\eta_i}{|B_{i + 1}|}|B_i| \\
        & = \frac{\eta_i}{|B_{i + 1}|}|B_i||C_i| \\
        & \le \frac{\eta_i}{|B_{i + 1}|}
        \frac{|B_i C_i|}{1 - 2\varepsilon}.
    \end{align*}
    Since
    \[
        \frac{|A_{i+1}|}{|B_i C_i|}
        \ge \frac{|A_{i+1}|}{|A_i|}
        \ge \frac{1}{2}
        \ge \frac{\eta_i}{1 - 2\varepsilon}3^{k-i},
    \]
    we have
    \[
        |B_{i + 1}^{-1} B_i C_i|
        \le \qty(1 + \frac{\eta_i}{1 - 2\varepsilon})|B_i C_i|
        \le |B_i C_i| + \frac{1}{3^{k-i}}|A_{i+1}|.
    \]
    Putting these together,
    we get
    \[
        |T(A_{i+1}, B_{i+1})|
        \ge \qty(1 - \frac{3}{3^{k - i}})|A_{i+1}|,
    \]
    so \ref{AiInvariant} holds.
    This concludes the construction.
    
    Now
    \[
        \frac{|B_i C_i|}{|A|}
        \ge \varepsilon (1 - \varepsilon)^{i + 2 - 2^{-i}}
        > \varepsilon (1 - \varepsilon)^{i + 2}
        > \varepsilon (1 - 2\varepsilon)^2 (1 - \varepsilon)^i,
    \]
    so for each $i < k$,
    there is a $B_i$-quasi-tiling
    $C'_i\subseteq C_i$ of $A_i$
    such that
    \[
        \varepsilon (1 - 2\varepsilon)^2 (1 - \varepsilon)^i
        \le \frac{|B_i C'_i|}{|A|}
        \le \varepsilon (1 - 2\varepsilon)(1 - \varepsilon)^i,
    \]
    as long as $A$ is large enough such that
    $\frac{|B_i|}{|A|}$ is smaller than
    the length of the interval
    \[
        \qty[\varepsilon (1 - 2\varepsilon)^2 (1 - \varepsilon)^i,
        \varepsilon (1 - 2\varepsilon)(1 - \varepsilon)^i].
    \]
    Then $(C'_i)_{i < k}$ is a $2\varepsilon$-disjoint $\A$-quasi-tiling of $A$
    which $(1 - 2\varepsilon)^3$-covers $A$.
    We also have
    \[
        \frac{|B_i||C'_i|}{|A|}
        \le \frac{1}{1 - 2\varepsilon}\frac{|B_i C'_i|}{|A|}
        \le \varepsilon (1 - \varepsilon)^i
        = p_i.
    \]
    
    So we are done by replacing $\varepsilon$ in the above argument
    by any $\bar\varepsilon$ such that $\varepsilon$ is greater
    than $2\bar\varepsilon$ and $1 - (1 - 2\bar\varepsilon)^3$.
\end{proof}

A countable group $G$ is \textbf{amenable}
if for every $B\in \Fin(G)$ and every $\varepsilon > 0$,
there is some $A\in \Fin(G)$ which is $(B, \varepsilon)$-invariant.
Note that we can assume that $A$ contains $B$.

\begin{prop}\label{quasitiling}
    Let $G$ be an amenable group and let
    $(\varepsilon_n)_{n < \infty}$ be a sequence of positive reals.
    Then there exist for each $n < \infty$,
    a finite family $\A_n$ in $\Fin_1(G)$
    and a probability distribution
    $\p^n$ on $\A_n$ such that
    \begin{enumerate}[label=(\roman*)]
        \item $\A_0 = \{\{1\}\}$,
        \item if $B\in \A_n$ and $A\in \A_{n + 1}$,
            then $A$ is $(B, \varepsilon_n)$-invariant and contains $B$,
        \item every $A\in \A_{n+1}$ has an
            $(\A_n, \varepsilon_n)$-quasi-tiling satisfying $\p^n$,
        \item $G = \bigcup_n\bigcup_{B\in\A_n}B$.
    \end{enumerate}
\end{prop}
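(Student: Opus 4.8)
The plan is to build the $\A_n$ and $\p^n$ by recursion on $n$, using \Cref{quasitilingInduction} to produce the quasi-tilings and amenability to produce the F\o lner sets fed into it. Finite $G$ is trivial (take $\A_n=\{G\}$ with $\p^n$ the point mass for $n\ge 1$), so I would assume $G$ infinite and fix an enumeration $G=\{g_0,g_1,\dots\}$. Set $\A_0=(\{1\})$ and let $\p^0$ be the unique distribution on it; note that for any $A\in\Fin_1(G)$ the single set $C_{\{1\}}=A$ is a $\{\{1\}\}$-quasi-tiling of $A$ that is (vacuously) $\varepsilon_0$-disjoint, covers $A$, contains $1$, and satisfies $\p^0$ since $|\{1\}||A|\le 1\cdot|A|$; in particular $(\A_0,\varepsilon_0)$ satisfies $\p^0$, witnessed by $\delta_0:=1$. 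For $n\ge 1$ I would apply \Cref{quasitilingInduction} to $\varepsilon_n$ to fix a length $k_n$, a distribution $\p^n=(p^n_i)_{i<k_n}$, and constants $\eta^n_i>0$ for $i<k_n-1$; the family $\A_n$ will be a descending chain $B^n_0\supseteq\cdots\supseteq B^n_{k_n-1}$ in $\Fin_1(G)$ with each $B^n_i$ ($i<k_n-1$) being $((B^n_{i+1})^{-1},\eta^n_i/|B^n_{i+1}|)$-invariant, so that by \Cref{quasitilingInduction} the pair $(\A_n,\varepsilon_n)$ satisfies $\p^n$, with some witnessing constant $\delta_n>0$.

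Given $\A_n$, hence its top set $B^n_0$ and the constant $\delta_n$, I would build $\A_{n+1}$ from its smallest member upward. What keeps this manageable is that $(B,\varepsilon)$-invariance passes to all subsets of $B$, and that cardinality and containment only grow as one moves up the chain; so the demands $|A|>1/\delta_n$ and $A\supseteq B^n_0$ need only be imposed on the bottom member, whereas ``$A$ is $(B^n_0,\min(\varepsilon_n,\delta_n))$-invariant'' must be imposed on every member. First I would choose $B^{n+1}_{k_{n+1}-1}$: by amenability, a $(B^*,\beta_n)$-invariant set containing $B^*$, where $B^*=B^n_0\cup(B^n_0)^{-1}\cup\{g_n\}\cup H_n$ with $1\in H_n$ and $|H_n|>1/\delta_n$, and $\beta_n=\min(\varepsilon_n,\delta_n,1/2)$; this set contains $B^n_0$ and $g_n$, is $(B^n_0,\varepsilon_n)$- and $(B^n_0,\delta_n)$-invariant, and meets a translate of $H_n$ (as $\beta_n<1$) hence has more than $1/\delta_n$ elements. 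Then, for $i$ from $k_{n+1}-2$ down to $0$, I would take (again by amenability) $B^{n+1}_i\supseteq S_i:=B^{n+1}_{i+1}\cup(B^{n+1}_{i+1})^{-1}\cup B^n_0$ that is $\bigl(S_i,\ \min(\eta^{n+1}_i/|B^{n+1}_{i+1}|,\varepsilon_n,\delta_n)\bigr)$-invariant. Then $\A_{n+1}=(B^{n+1}_i)_{i<k_{n+1}}$ is a descending chain in $\Fin_1(G)$ with exactly the internal invariance needed to apply \Cref{quasitilingInduction} to $\varepsilon_{n+1}$ at the next stage (producing $\delta_{n+1}$), and each of its members contains $B^n_0$, is $(B^n_0,\varepsilon_n)$- and $(B^n_0,\delta_n)$-invariant, and has more than $1/\delta_n$ elements.

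It then remains to check the four conditions. Condition (i) and the inclusion $\bigcup_n\bigcup_{B\in\A_n}B\subseteq G$ are immediate, and (iv) holds because $g_n\in B^{n+1}_{k_{n+1}-1}\in\A_{n+1}$ for every $n$. For (ii): if $B\in\A_n$ and $A\in\A_{n+1}$, then $B\subseteq B^n_0\subseteq A$, and since $A$ is $(B^n_0,\varepsilon_n)$-invariant and $B\subseteq B^n_0$ also $A$ is $(B,\varepsilon_n)$-invariant (the $n=0$ case being trivial). For (iii) with $n\ge 1$: every $A\in\A_{n+1}$ is in $\Fin_1(G)$, contains each $B\in\A_n$, is $(B,\delta_n)$-invariant for each such $B$, and has more than $1/\delta_n$ elements, so by the definition of ``$(\A_n,\varepsilon_n)$ satisfies $\p^n$'' it carries an $(\A_n,\varepsilon_n)$-quasi-tiling satisfying $\p^n$; for $n=0$ this is the quasi-tiling $C_{\{1\}}=A$ noted above. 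I expect the only real work to be this bookkeeping --- making each $\A_{n+1}$ simultaneously serve as the set of tile shapes for stage $n$ (via largeness, invariance, and containment relative to $B^n_0$) and as the chain fed into \Cref{quasitilingInduction} at stage $n+1$ --- which goes through because amenability lets one make a F\o lner set as invariant and as large as desired at once; the genuine combinatorial content sits entirely inside \Cref{quasitilingInduction} (and, through it, \Cref{invariantCover}), which we take as given.
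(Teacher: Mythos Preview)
Your proposal is correct and follows essentially the same inductive construction as the paper: apply \Cref{quasitilingInduction} to each $\varepsilon_n$ to get $\p^n$, $k_n$, and the $\eta^n_i$, then build $\A_{n+1}$ as a descending chain of F{\o}lner sets chosen (via amenability) to be simultaneously invariant enough for the $(\A_n,\varepsilon_n)$-quasi-tiling requirement and for the chain condition needed at the next stage, while also containing $g_n$. Your version is slightly more explicit than the paper's in that you track the witnessing constant $\delta_n$ from the definition of ``$(\A_n,\varepsilon_n)$ satisfies $\p^n$'' and bundle the invariance demands through $B^n_0$ (using that $(B,\varepsilon)$-invariance is monotone in $B$), whereas the paper simply asserts at each step that the desired F{\o}lner set exists; one incidental redundancy is your ``meets a translate of $H_n$'' remark, since you already require $B^{n+1}_{k_{n+1}-1}\supseteq B^*\supseteq H_n$.
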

\begin{proof}
    Fix an enumeration $(g_n)_n$ of $G$.
    We inductively define $\A_n$ and $\p^n$ satisfying
    the given conditions such that additionally,
    $(\A_n, \varepsilon_n)$ satisfies $\p^n$.
    For $n = 0$,
    take $\A_0 = \{\{1\}\}$,
    and let $\p^0$ be the unique probability distribution on $\A_0$.
    Then $(\A_0, \varepsilon_0)$ satisfies $\p^0$.
    Now suppose that $\A_n$ and $\p^n$ have been defined.
    Apply \Cref{quasitilingInduction} to $\varepsilon_{n+1}$
    to obtain a probability distribution $\p^n = (p_i)_{i < k_n}$
    and constants $(\eta^n_i)_{i < k_n - 1}$.
    We turn to defining $\A_{n+1} = (B^{n+1}_i)_{i < k_{n+1}}$.
    First we define $B^{n+1}_{k_{n+1} - 1}$,
    by choosing any $B^{n+1}_{k_{n+1} - 1}\in\Fin_1(G)$
    which contains $B$ and is
    $(B, \varepsilon_n)$-invariant for every $B\in\A_n$,
    and contains $g_n$.
    and which has an $(\A_n, \varepsilon_n)$-quasi-tiling satisfying $\p^n$
    (which is possible since $(\A_n, \varepsilon_n)$ satisifies $\p^n$).
    Now for any $i < k_{n+1} - 1$,
    we define $B^{n+1}_i$ from $B^{n+1}_{i+1}$,
    by choosing any $B^{n+1}_i\in\Fin_1(G)$
    containing $B^{n+1}_{i+1}$
    which is $\qty((B^{n+1}_{i+1})^{-1},
    \frac{\eta^n_i}{|B^{n+1}_{i + 1}|})$-invariant,
    $(B, \varepsilon_n)$-invariant for every $B\in\A_n$,
    and which has an $(\A_n, \varepsilon_n)$-quasi-tiling satisfying $\p^n$.
    Then $\A_{n+1}$ satisfies the given conditions
    and additionally,
    $(\A_{n+1}, \varepsilon_{n+1})$ satisfies $\p^{n+1}$.
\end{proof}

\subsection{General case}\label{generalCase}

\begin{thm}\label{amenableLift}
    Every outer action of an amenable group lifts.
\end{thm}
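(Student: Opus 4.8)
The plan is to reduce the general case to the countably ergodic case (\Cref{amenableLiftCountable}) together with the compressible case (\Cref{compressibleCb}), using the quasi-tiling machinery and the $E$-null calculus developed in \ref{generalCase} to handle the ``continuum of ergodic components'' obstruction. Fix an aperiodic CBER $E$ on $X$ (the finite part contributes nothing by restriction) and a morphism $G\to\Out_B(E)$ with $G$ amenable. By \Cref{liftModNull}, it suffices to produce a lift $G\to\Aut_{\NULL_E}(E)$, i.e., to work modulo $E$-null sets throughout. So the real task is to build, for each $g\in G$, a genuine element $T_g\in\Aut_{\NULL_E}(E)$ lifting the image of $g$, in a way that is compatible with the group law on an $E$-conull set.

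The first step is to fix a sequence $(\varepsilon_n)$ of positive reals decreasing fast, and apply \Cref{quasitiling} to obtain finite families $\A_n\subseteq\Fin_1(G)$ with probability distributions $\p^n$, so that each $A\in\A_{n+1}$ has an $(\A_n,\varepsilon_n)$-quasi-tiling satisfying $\p^n$, $\A_0=\{\{1\}\}$, and $G=\bigcup_n\bigcup_{B\in\A_n}B$. The second step is the partial-lift construction: using \Cref{equivalentSets}, \Cref{disjointLifts}, and \Cref{smallerSubset}, one shows inductively on $n$ that one can choose, for every $B\in\A_n$, a Borel assignment of a ``tile'' $A^B\in\ALG_E$ of measure $\mu_E(A^B)=|B|^{-1}\cdot(\text{something controlled by }\p^n)$ together with a partial lift of $g$ over that tile that is coherent with the group law on the finite set $B$ — exactly the Borel analogue of the Ornstein--Weiss Rokhlin-tower argument in \cite{FSZ89}, \cite{OW80}, \cite{OW87}. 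Here the quasi-tiling structure lets one pack the $\A_n$-tiles into the $\A_{n+1}$-tiles with $\varepsilon_n$-disjointness, so the successive towers agree on larger and larger pieces, and the leftover (uncovered) region at each stage has $\mu_E$-measure at most $\prod(1-\text{const})$, hence is swept into an $E$-null set in the limit. Passing to the limit, the union of the coherent partial lifts over the tiles at level $n$, as $n\to\infty$, defines $T_g$ on an $E$-conull set, and the condition $G=\bigcup_n\bigcup_{B\in\A_n}B$ ensures every group element is eventually covered with the correct relations, so $g\mapsto T_g$ is a morphism $G\to\Out_{\NULL_E}(E)$.

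The main obstacle — and the reason this is harder than \Cref{amenableLiftCountable} — is the bookkeeping needed to make the Ornstein--Weiss quasi-tiling argument \emph{uniform over the (possibly continuum-sized) ergodic components simultaneously and Borel}. In the pmp ergodic setting of \Cref{amenablePmp} one works on a single ergodic component with a genuine measure; here one only has the ideal $\NULL_E$ and the ``measure'' $\mu_E$ taking a common value across components, so every Rokhlin-tower estimate must be phrased as a statement in $\ALG_E$ about $\mu_E$ and then realized by an honest Borel partial lift via \Cref{equivalentSets} and \Cref{smallerSubset}; the compatibility of the towers across levels is what the carefully engineered constants in \Cref{quasitilingInduction} are designed to guarantee. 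Once $T_g$ is constructed modulo $\NULL_E$, \Cref{liftModNull} finishes: on the $E^{\vee G}$-invariant $E$-conull set where everything is coherent we get an honest lift $G\to\Aut_B(E\uhr Y)$, and on the complement $E$ is compressible, so \Cref{compressibleCb} supplies a (class-bijective) lift there, and the two combine to a lift $G\to\Aut_B(E)$.
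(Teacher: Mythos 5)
Your proposal follows essentially the same route as the paper: reduce modulo $E$-null sets via \Cref{liftModNull}, run the Ornstein--Weiss quasi-tiling construction of \Cref{quasitiling} to build coherent partial lifts in $\ALG_E$ using \Cref{equivalentSets}, \Cref{disjointLifts} and \Cref{smallerSubset}, pass to a pointwise limit of the $\varphi^n_g$ on an $E$-conull set (which the paper makes precise via a summability condition on the $\varepsilon_n$ and the Borel--Cantelli lemma, rather than a nested-tower estimate), and conclude with \Cref{liftModNull} together with \Cref{compressibleCb}. The only stray element is your opening claim that the argument reduces to the countably ergodic case (\Cref{amenableLiftCountable}); neither your construction nor the paper's uses that reduction, and the rest of your plan is the paper's proof in outline.
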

\begin{proof}
    Let $G$ be an amenable group,
    and let $E$ be a CBER on $X$.
    By \Cref{smoothFacts},
    we can assume that $E$ is aperiodic.
    By \Cref{liftModNull},
    it suffices to show that every morphism
    $G\to\Out_{\NULL_E}(E)$ lifts to $\Aut_{\NULL_E}(E)$.
    For the rest of the proof,
    when we refer to a subset of $X$,
    we will mean its equivalence class in $\ALG_E$.
    
    Fix a sequence $(\varepsilon_n)_{n < \infty}$
    of positive reals less than $1$
    such that
    \[
        \sum_n (1-(1-\varepsilon_n)(1-3\varepsilon_n)) < \infty.
    \]
    Apply \Cref{quasitiling} to $(\varepsilon_n)_n$
    to obtain for each $n < \infty$,
    a finite family $\A_n$ in $\Fin_1(G)$
    and a probability distribution
    $\p^n = (p^n_A)_{A \in \A_n}$ on $\A_n$.
    For ease of notation,
    we will write $p_A$ instead of $p^n_A$.
    
    For each $n < \infty$,
    we construct a disjoint family
    $(X_A)_{A \in \A_n}\subseteq \ALG_E$,
    and partial lifts $\varphi^n_g \in \Aut_{\NULL_E}(E)$
    of some $g\in G$ such that
    \begin{enumerate}[label=(\roman*)]
        \item $\varphi^n_1 = \id_X$,
        \item for $A\in\A_n$,
            we have $|A|\mu_E(X_A) = p_A$,
        \item the family $\{\varphi^n_g(X_A) : A\in\A_n, g\in A\}$
            is disjoint,
        \item for $A\in\A_n$,
            if $g, h, gh\in A$,
            then $\varphi^n_{gh}$ and $\varphi^n_g\varphi^n_h$
            agree on $X_A$.
    \end{enumerate}
    We proceed by induction on $n$.
    For $n = 0$,
    take $X_{\{1\}} = X$ and $\varphi^0_1 = \id_X$.
    Now suppose that the construction holds for $n$.
    We will repeatedly use \Cref{equivalentSets},
    \Cref{disjointLifts},
    and \Cref{smallerSubset}
    to obtain the partial lifts $\varphi^{n+1}_g$.
    For each $A\in\A_{n+1}$,
    fix an $(\A_n, \varepsilon_n)$-quasi-tiling
    $(C^A_B)_{B\in \A_n}$ of $A$.
    By $\varepsilon_n$-disjointness,
    for each $B\in \A_n$
    there is a disjoint family
    $\{D^A_{B,c} c\}_{c\in C^A_B}$
    where each $D^A_{B,c}$ is a subset of $B$
    which $(1 - \varepsilon_n)$-covers $B$.
    For each $A\in\A_{n+1}$,
    choose $X_A\subseteq X_B$ where $1\in C^A_B$,
    such that $|A|\mu_E(X_A) = p_A$;
    we can do this since
    \[
        \frac{p_A}{|A|}
        \le \frac{|C^A_B|}{|A|}
        \le \frac{p_B}{|B|}
        = \mu_E(B).
    \]
    For each $A\in\A_{n+1}$,
    each $B\in\A_n$,
    and each $c\in C^A_B$,
    define $\varphi^{n+1}_c$ on $X_A$
    so that for every $B\in\A_n$,
    the family
    $\{\varphi^{n+1}_c(X_A): A\in\A_{n+1}, c\in C^A_B\}$
    is disjoint and contained in $X_B$
    (see \Cref{figure});
    we can do this since for each $A\in\A_{n+1}$,
    we have
    \[
        \sum_{c\in C^A_B} \mu_E(X_A)
        = |C^A_B| \frac{p_A}{|A|}
        \le p_A \frac{p_B}{|B|}
        = p_A \mu_E(X_B).
    \]
    Now for each $A\in\A_{n+1}$,
    each $B\in\A_n$,
    each $c\in C^A_B$,
    and each $h\in D^A_{B,c}$,
    define $\varphi^{n+1}_{hc}$ on $X_A$
    by setting it equal to $\varphi^n_h\varphi^{n+1}_c$.
    Then for each $A\in\A_{n+1}$ and each $g\in A$,
    define $\varphi^{n+1}_g$ on $X_A$ if it hasn't been already defined,
    such that the family
    $\{\varphi^{n+1}_g (X_A) : A\in\A_{n+1}, g\in A\}$
    partitions $X$;
    this is possible since
    \[
        \sum_{A \in \A_{n+1}}\sum_{g\in A} \mu_E(X_A)
        = \sum_{A \in \A_{n+1}} |A|\mu_E(X_A)
        = \sum_{A \in \A_{n+1}} p_A
        = 1.
    \]
    Finally,
    for each $A\in\A_{n+1}$ and $g, h, gh\in A$,
    define $\varphi^{n+1}_g$ on $\varphi^{n+1}_h(X_A)$
    by setting it to be equal to $\varphi^{n+1}_{gh}(\varphi^{n+1}_h)^{-1}$.
    This concludes the construction.
    
    \begin{figure}
        \centering
        \begin{tikzpicture}[scale=0.8]
            \def\h{7};
            \def\v{4};
            \def\i{3};
            \def\ii{4};
            \def\iii{3};
            \def\iv{4};
            \coordinate (top-1) at (-\h/2,\v);
            \coordinate (bot-1) at (-\h/2,-\v);
            \coordinate (top0) at (0,\v);
            \coordinate (bot0) at (0,-\v);
            \coordinate (top1) at (\h/2,\v);
            \coordinate (bot1) at (\h/2,-\v);
            \clip (0,0) ellipse [x radius = \h, y radius = \v];
            
            \begin{scope}
                \path[clip] (top-1)
                    to[relative, out=10, in=170] (bot-1)
                    -- (-\h,-\v)
                    -- (-\h,\v)
                    -- cycle;
                \draw[fill=lightgray] (-\h-1,-\v/3)
                to[relative, out=10, in=170] (-\h/2+1,-\v/3)
                -- (-\h/2+1,-\v) -- (-\h-1,-\v) -- cycle;
                \draw[dashed] (-\h-1,\v/3) to[relative, out=10, in=170] (-\h/2+1,\v/3);
            \end{scope}
            \begin{scope}
                \path[clip] (top-1)
                    to[relative, out=10, in=170] (bot-1)
                    -- (bot0)
                    to[relative, out=-10, in=-170] (top0)
                    -- cycle;
                \draw[fill=lightgray] (-\h/2-1,-\v/2)
                    to[relative, out=10, in=170] (1,-\v/2)
                    -- (1,-\v) -- (-\h/2-1,-\v) -- cycle;
                \draw[dashed] (-\h/2-1,0) to[relative, out=10, in=170] (1,0);
                \draw[dashed] (-\h/2-1,\v/2) to[relative, out=10, in=170] (1,\v/2);
            \end{scope}
            \begin{scope}
                \path[clip] (top0)
                    to[relative, out=10, in=170] (bot0)
                    -- (bot1)
                    to[relative, out=-10, in=-170] (top1)
                    -- cycle;
                \draw[fill=lightgray] (-1,-\v/3)
                    to[relative, out=10, in=170] (\h/2+1,-\v/3)
                    -- (\h/2+1,-\v) -- (-1,-\v) -- cycle;
                \draw[dashed] (-1,\v/3) to[relative, out=10, in=170] (\h/2+1,\v/3);
            \end{scope}
            \begin{scope}
                \path[clip] (top1)
                    to[relative, out=10, in=170] (bot1)
                    -- (\h, -\v)
                    -- (\h, \v)
                    -- cycle;
                \draw[fill=lightgray] (\h/2-1,-\v/2)
                to[relative, out=10, in=170] (\h+1,-\v/2)
                -- (\h+1,-\v) -- (\h/2-1,-\v) -- cycle;
                \draw[dashed] (\h/2-1,0) to[relative, out=10, in=170] (\h+1,0);
                \draw[dashed] (\h/2-1,\v/2) to[relative, out=10, in=170] (\h+1,\v/2);
            \end{scope}
            
            \draw (0,0) ellipse [x radius = \h, y radius = \v];
            \draw (top-1) to[relative, out=10, in=170] (bot-1);
            \draw (top0) to[relative, out=10, in=170] (bot0);
            \draw (top1) to[relative, out=10, in=170] (bot1);
            
            \def\r{\v/20};
            \def\s{\v/12};
            
            \draw (-\h*0.76, -\v*0.4) circle [radius = \r];
            \draw (-\h*0.7, -\v*0.55) circle [radius = \r];
            \draw (-\h*0.6, -\v*0.5) rectangle +(\s,\s);
            \draw (-\h*0.55, -\v*0.7) rectangle +(\s,\s);
            
            \draw[fill=black] (-\h*0.3, -\v*0.6) circle [radius = \r];
            \draw (-\h*0.25, -\v*0.78) circle [radius = \r];
            \draw (-\h*0.15, -\v*0.6) rectangle +(\s,\s);
            \draw (-\h*0.1, -\v*0.8) rectangle +(\s,\s);
            
            \draw (\h*0.2, -\v*0.4) circle [radius = \r];
            \draw (\h*0.12, -\v*0.6) circle [radius = \r];
            \draw (\h*0.2, -\v*0.8) circle [radius = \r];
            \draw (\h*0.3, -\v*0.4) rectangle +(\s,\s);
            \draw[fill=black] (\h*0.24, -\v*0.6) rectangle +(\s,\s);
            \draw (\h*0.3, -\v*0.8) rectangle +(\s,\s);
            
            \draw (\h*0.6, -\v*0.65) circle [radius = \r];
            \draw (\h*0.7, -\v*0.6) rectangle +(\s,\s);
        \end{tikzpicture}
        \caption{The shaded regions are $X_B$ for $B\in\A_n$,
        and the regions above each $X_B$ are
        its translates $\varphi^n_b(X_B)$ for $b\in B$.
        The black disk is some $X_A$,
        the other disks are its translates $\varphi^{n+1}_c(X_A)$,
        and analogously for the squares for some other $A'\in\A_{n+1}$.}
        \label{figure}
    \end{figure}
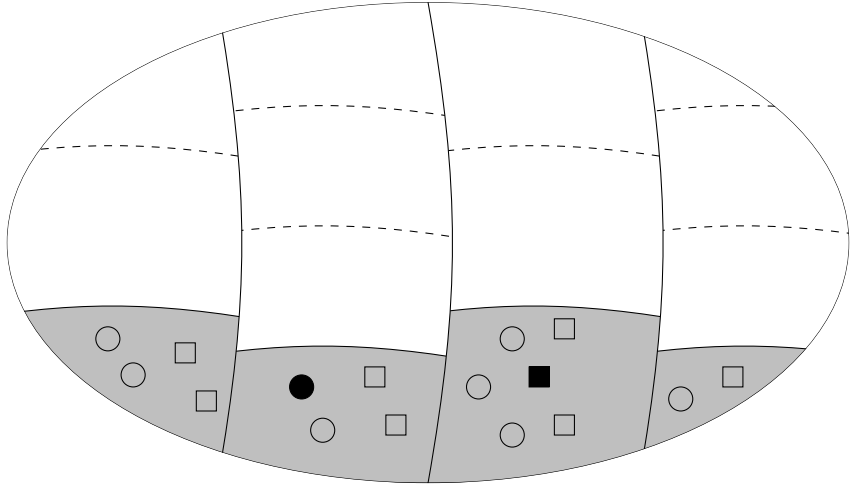
    
    We claim that for every $g\in G$,
    the pointwise limit $\varphi_g := \lim_n \varphi^n_g$ exists
    and is a total function.
    Let $n$ be large enough such that
    there is some $C \in \A_{n-1}$ with $g\in C$.
    Now for any $A \in \A_{n+1}$,
    $B \in \A_n$,
    $c \in C^A_B$,
    and $h \in D^A_{B,c}$ with $gh \in D^A_{B,c}$,
    we have on $X_A$,
    \[
        \varphi^n_g\varphi^{n+1}_{hc}
        = \varphi^n_g\varphi^n_h\varphi^{n+1}_c
        = \varphi^n_{gh}\varphi^{n+1}_c
        = \varphi^{n+1}_{ghc}
        = \varphi^{n+1}_g \varphi^{n+1}_{hc},
    \]
    so $\varphi^n_g$ and $\varphi^{n+1}_g$
    agree on $\varphi^{n+1}_{hc}(X_A)$.
    We have
    \[
        |B\setminus g^{-1} D^A_{B,c}|
        \le |B\setminus g^{-1} B|
        + |g^{-1}B\setminus g^{-1} D^A_{B,c}|
        < 2\varepsilon_n|B|.
    \]
    So $\varphi^n_g$ and $\varphi^{n+1}_g$ agree
    on a set of $\mu_E$-measure at least
    \begin{align*}
        \sum_{A\in\A_{n+1}}\sum_{B\in\A_n}\sum_{c\in C^A_B}
        \sum_{\substack{h\in D^A_{B,c}\\
        gh \in D^A_{B,c}}}
        \mu_E(\varphi^{n+1}_{hc}(X_A))
        & \ge \sum_{A\in\A_{n+1}}\sum_{B\in\A_n}|C^A_B|
        (1 - 3\varepsilon_n)|B|\frac{p_A}{|A|} \\
        & \ge \sum_{A\in\A_{n+1}}(1-\varepsilon_n)
        (1 - 3\varepsilon_n)p_A \\
        & \ge (1-\varepsilon_n)(1 - 3\varepsilon_n).
    \end{align*}
    So we are done by the Borel-Cantelli lemma.
    
    Now we claim that $g\mapsto\varphi_g$ is an action.
    Let $g, h\in G$.
    Choose $n$ large enough such that
    there is some $C\in\A_{n-1}$ with $g, h, gh\in C$.
    Now for any $B\in\A_n$
    and $k\in B$ with $hk, ghk \in B$,
    we have on $X_B$,
    \[
        \varphi^n_{gh} \varphi^n_k
        = \varphi^n_{ghk}
        = \varphi^n_g \varphi^n_{hk}
        = \varphi^n_g \varphi^n_h \varphi^n_k,
    \]
    so $\varphi^n_{gh}$ and $\varphi^n_g \varphi^n_h$
    agree on $\varphi^n_k(X_B)$.
    We have $|B\setminus h^{-1}B| \le \varepsilon_n|B|$
    and $|B\setminus (gh)^{-1}B| \le \varepsilon_n|B|$.
    So $\varphi^n_{gh}$ and $\varphi^n_g \varphi^n_h$
    agree on a set of $\mu_E$-measure at least
    \begin{align*}
        \sum_{B\in\A_n}
        \sum_{\substack{k\in B\\ hk, ghk\in B}}
        \mu_E(\varphi^n_k(X_B))
        & \ge \sum_{B\in\A_n}
        (1 - 2\varepsilon_n)|B|\mu_E(\varphi^n_r(X_B)) \\
        & \ge \sum_{B\in A_n}(1 - 2\varepsilon_n)p_B \\
        & \ge (1 - 2\varepsilon_n)
    \end{align*}
    So we are done by the Borel-Cantelli lemma.
\end{proof}

We can obtain class-bijective lifts for some amenable groups,
including abelian groups and amenable groups with countably many subgroups.
\begin{cor}\label{amenableCb}
    Let $G$ be an amenable group whose conjugacy equivalence relation
    on its space of subgroups is smooth.
    Then every outer action of $G$ has a class-bijective lift.
\end{cor}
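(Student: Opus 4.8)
The plan is to reduce, via \Cref{linkLift}, to producing an $(E,E^{\vee G})$-link, and to build one by running the lifting technology of \Cref{amenableLift} on pieces where the stabilizers of the action on $X/E$ are constant up to conjugacy; the smoothness hypothesis is exactly what makes those pieces Borel. Write $F=E^{\vee G}$ for the CBER induced by the given outer action $\phi\colon G\to\Out_B(E)$. The map $x\mapsto\Stab_G([x]_E)\in\mathrm{Sub}(G)$ is Borel and $E$-invariant, since for each $g$ the set $\{x:g\cdot[x]_E=[x]_E\}$ is Borel; moreover its value up to conjugacy depends only on the $F$-class of $x$. Fix a Borel transversal $T\subseteq\mathrm{Sub}(G)$ for the (smooth) conjugacy relation, and let $\sigma\colon X\to T$ send $x$ to the unique $H\in T$ conjugate to $\Stab_G([x]_E)$; this $\sigma$ is Borel and $F$-invariant. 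Put $D=\{x:\Stab_G([x]_E)=\sigma(x)\}$; then $D$ is Borel, $E$-invariant, and a complete $F$-section, and it is the disjoint union of the Borel family $(D_H)_{H\in T}$ with $D_H=(\sigma\uhr D)^{-1}(\{H\})$.

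First I would reduce constructing an $(E,F)$-link to constructing an $(E\uhr D,F\uhr D)$-link. For each $g\in G$ fix a lift $T_g\in\Aut_B(E)$ of $\phi(g)$, and for $x\in X$ let $g_x$ be the least $g$ (in a fixed enumeration of $G$) with $g\Stab_G([x]_E)g^{-1}=\sigma(x)$; this exists and is Borel and $E$-invariant. Then $T_{g_x}(x)\in D$, because $[T_{g_x}(x)]_E=g_x\cdot[x]_E$ has stabilizer $\sigma(x)=\sigma(T_{g_x}(x))$. Given an $(E\uhr D,F\uhr D)$-link $L_D$, define $L$ on $X$ by $x\mr L y\iff T_{g_x}(x)\mr{L_D}T_{g_y}(y)$. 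One checks that $L$ is a CBER contained in $F$, and that inside any $F$-class $C$ an $E$-class $A$ meets any $L$-class in exactly one point: if $g_A$ denotes the common value of $g_x$ on $A$, then $T_{g_A}$ carries $A$ bijectively onto an $E$-class contained in $D\cap C$, and this $E$-class meets the relevant $L_D$-class in exactly one point. So $L$ is an $(E,F)$-link and we finish by \Cref{linkLift}.

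To build $L_D$, I would work on each piece $D_H$ and glue. Every $g\in N_G(H)$ preserves $D_H$ (for $x\in D_H$, having $g\cdot[x]_E\in D_H/E$ forces $gHg^{-1}=H$), so $\phi$ restricts to a morphism $N_G(H)\to\Out_B(E\uhr D_H)$; since $H$ fixes every $E$-class in $D_H$, this factors through $\bar H:=N_G(H)/H\to\Out_B(E\uhr D_H)$, and the induced action of $\bar H$ on $D_H/E$ is free, and transitive on each $F\uhr D_H$-class. As $\bar H$ is amenable, \Cref{amenableLift} yields a lift $\bar H\to\Aut_B(E\uhr D_H)$; since $\bar H$ acts freely on $E$-classes, \emph{any} such lift is automatically class-bijective, so its orbit equivalence relation is an $(E\uhr D_H,F\uhr D_H)$-link. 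The union of these over $H\in T$ is the desired $L_D$, provided it is Borel, and this is the main obstacle: the lifts over the various $D_H$ must be obtained uniformly in $H\in T$. Concretely, one must carry out the quasi-tiling construction behind \Cref{amenableLift} (using \Cref{quasitiling}, \Cref{equivalentSets}, \Cref{disjointLifts}, and \Cref{smallerSubset}) over the Borel family of amenable groups $(N_G(H)/H)_{H\in T}$, choosing all the invariance and subset data by least witnesses so that every object produced is Borel in the parameter $H$. The Borelness and stabilizer bookkeeping in the first two steps is routine by comparison.
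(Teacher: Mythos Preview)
Your argument is correct and parallels the paper's, but organizes the decomposition differently. The paper works modulo $E$-null sets and uses the ergodic decomposition: it lets $G$ act on $\EINV_E$, and for each ergodic component $X_e$ with $G$-stabilizer $H\in\C$ (a fixed conjugacy transversal), ergodicity forces the $H$-stabilizer of $[x]_E$ to be a constant normal subgroup $N_H\trianglelefteq H$ on $X_e$; then \Cref{amenableLift} is applied to the free action of $H/N_H$ on $X_e$, the resulting per-component links are assembled into $L_H$ on $X_H=\bigcup\{X_e:\Stab_G(e)=H\}$, and the final link is generated by the $L_H$ together with edges coming from chosen lifts $\psi_H$ of fixed conjugating elements $g_H$. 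Your route bypasses ergodic decomposition entirely by partitioning directly according to $\Stab_G([x]_E)$ and applying \Cref{amenableLift} to the free action of $N_G(H)/H$ on each $D_H$; this is combinatorially cleaner and stays in the pure Borel category, and your reduction from an $(E,F)$-link to an $(E\uhr D,F\uhr D)$-link via $x\mapsto T_{g_x}(x)$ plays the same role as the paper's final gluing step with the $\psi_H$. Both arguments face the identical uniformity burden you flag---running the quasi-tiling construction of \Cref{amenableLift} in a Borel way over an uncountable family of amenable quotients---and neither writes this out in full; the paper's modulo-null framework makes the per-component uniformity implicit in the measure-theoretic steps (\Cref{equivalentSets}, \Cref{smallerSubset}), while your version additionally requires choosing the F{\o}lner data of \Cref{quasitiling} Borel in $H\in T$, which is routine but not free.
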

\begin{proof}
    For this proof,
    we will work modulo $E$-null sets.
    Fix a morphism $G \to \Out_{\NULL_E}(E)$.
    Let $(X_e)_{e \in \EINV_E}$ be the ergodic decomposition of $E$.
    Let $\C$ be a transversal for the conjugacy equivalence relation
    on the space of subgroups,
    and for each subgroup $H\le G$,
    fix some $g_H \in G$ such that $g_H H g_H^{-1} \in \C$.
    The action $\Out_{\NULL_E}(E) \car \EINV_E$
    induces an action $G \car \EINV_E$.
    If $e \in \EINV_E$ has stabilizer $H \in \C$ under this action,
    then if $N_H$ is the kernel of $H \to \Out_{\NULL_E}(E \uhr X_e)$,
    we have $\Stab_H(x) = N_H$ by ergodicity,
    and thus $H/N_H \to \Out_{\NULL_E}(E \uhr X_e)$ is a free action.
    Thus by applying \Cref{amenableLift} to $X_e$,
    there is a class-bijective lift
    $H/N_H \to \Aut_{\NULL_E}(E \uhr X_e)$,
    and this gives a class-bijective lift
    $H \to \Aut_{\NULL_E}(E \uhr X_e)$,
    and thus a link.
    So for each $H \in \C$,
    if we let $X_H$ be the union of the ergodic components with stabilizer $H$,
    then there is an $(E \uhr X_H, E^{\vee H} \uhr X_H)$-link $L_H$.
    Now for an arbitrary subgroup $H \le G$,
    fix a lift $\psi_H$ of $g_H$.
    Then the smallest equivalence relation containing $L_H$
    and $\{(x, \psi_H(x)) :
    \text{$x \in X_e$ with $\Stab(e) = H$}\}$
    for every $H$
    is an $(E, E^{\vee G})$-link.
\end{proof}

\begin{rmk}
    There are locally finite groups for which
    the conjugacy equivalence relation on the space of subgroups is not smooth.
    Take, for example, a finite group $H$ with a non-normal subgroup $H'$ and let $\C$ be the conjugacy class of $H'$.
    Let $G = \bigoplus_n H$ be the infinite direct sum of copies of $H$.
    Consider the set $X$ of subgroups of $G$ of the form $\bigoplus_n H_n$,
    where $H_n \in \C$.
    Then $E_0$ is Borel reducible to the conjugacy equivalence relation on $X$,
    which is therefore non-smooth.
\end{rmk}

For general amenable groups,
the problem is still open:
\begin{prob}\label{probAmenableCb}
    Let $G$ be an amenable group.
    Does every $G \to \Out_B(E)$ have a class-bijective lift?
\end{prob}
We remark that in \Cref{probAmenableCb} it suffices to consider hyperfinite $E$.
To see this,
note that by \Cref{amenableLift},
there is a lift $G \to \Aut_B(E)$.
Then it suffices to find an $(E\cap E_G^X, E_G^X)$-link.
So by replacing $E$ with $E \cap E_G^X$,
we can assume that $E$ is amenable, in the sense of \cite[9.1]{Kec22},
and this is hyperfinite on an $E$-conull set, see \cite[9.4]{Kec22}.

\section{Summary of lifting results for outer actions}
\label{sectionSummary}

Let $\G$ be the class of groups for which
every outer action has a lift.
Then
\begin{itemize}
    \item $\G$ contains all amenable groups
        (\Cref{amenableLift}).
    \item $\G$ contains all amalgamated products of finite groups
        (\Cref{finiteAmalgamLift}).
    \item $\G$ is closed under subgroups
        (\Cref{subgroupClosed}).
    \item $\G$ is closed under free products.
    \item Every group in $\G$ is treeable
        (\Cref{liftTreeable}).
\end{itemize}

Let $\G_\cb$ be the class of groups for which
every outer action has a class-bijective lift.
Then
\begin{itemize}
    \item $\G_\cb$ contains all locally finite groups
        (\Cref{lfCb}).
    \item $\G_\cb$ contains all amenable groups
        whose conjugacy equivalence relation
        on the space of subgroups is smooth
        (\Cref{amenableCb}).
    \item $\G_\cb$ is closed under subgroups
        (\Cref{subgroupClosed}).
    \item $\G_\cb$ is closed under quotients
        (\Cref{quotientClosed}).
    \item $\G_\cb$ is closed under extensions
        by a finite normal subgroup
        (\Cref{finiteNormalSubgroup}).
\end{itemize}

\begin{prob}
    Characterize the classes $\G$ and $\G_\cb$.
\end{prob}

\section{Additional topics}
\label{sectionAdditional}

\subsection{Algebraic properties of automorphism groups}

There are several results concerning
the algebraic properties of $\Inn_B(E)$
(see \cite{Mil04}, \cite{Mer93}, \cite{MR07}),
and similarly for $\Inn_\mu(E)$ in the pmp case
(see \cite[\S \S 3-4]{Kec10} and the references therein).
In particular,
it is known that for aperiodic $E$,
the group $\Inn_B(E)$ is generated by involutions
and similarly for $\Inn_\mu(E)$.
However,
not much seems to be known about the groups
$\Aut_B(E), \Aut_\mu(E), \Out_B(E)$,
including the question about generation by involutions.
There are pmp, ergodic $E$ for which
$\Aut_\mu(E)$ is generated by involutions,
for example $E_0$ (see \cite[p.46]{Kec10})
and pmp ergodic $E$ that have trivial $\Out_\mu(E)$
(for the existence of such,
see \cite{Gef96}).
Since $E_0$ is uniquely ergodic,
the question of whether $\Aut_B(E_0)$ is generated by involutions
would have a positive answer
if $\Aut_B(E)$ is generated by involutions
for any hyperfinite compressible $E$.
So it seems natural to consider first the question of
generation by involutions of $\Aut_B(E)$,
where $E$ is a compressible CBER.

In the case of $\Sym_B(X/E)$,
Miller has shown that if $T \in \Sym_B(X/E)$
with $E^{\vee T}$ hyperfinite,
then $T$ is a product of three involutions.

\subsection{Conjugacy of outer actions}

A result of Bezuglyi-Golodets \cite{BG87},
in combination with \Cref{amenablePmp},
shows that any two morphisms
$\varphi_1, \varphi_2:G\to\Out_\mu(E_0)$ are conjugate
(i.e., there is $\theta\in \Out_\mu(E_0)$ such that
$\varphi_1(g) = \theta\varphi_2(g)\theta^{-1}$)
iff $\ker(\varphi_1) = \ker(\varphi_2)$.
Using \Cref{amenableLiftUnique},
one can see that the analogous result would hold
for morphisms of amenable groups into $\Out_B(E_0)$
if it holds for morphisms of amenable groups into $\Out_B(E)$
for $E$ compressible hyperfinite,
which again leads to the question of
whether an analog of the Bezuglyi-Golodets theorem holds
for morphisms of amenable groups into $\Out_B(E)$,
when $E$ is any compressible CBER.

\subsection{Embeddings of quotients}
\label{embeddingsOfQuotients}

For a countable group $G$,
let $F_0(G)$ be the CBER on $G^\N$ defined by
\[
    (g_0, g_1, g_2, \ldots) \mathrel{F_0(G)} (h_0, h_1, h_2, \ldots)
    \iff
    \exists m \,
    \forall k > m \,
    [g_0 \cdots g_k = h_0 \cdots h_k].
\]
There is an action $G \to \Aut_B(F_0(G))$ defined by
\[
    g\cdot (g_0, g_1, g_2, \ldots)
    = (g\cdot g_0, g_1, g_2, \ldots),
\]
inducing an action $G \car_B G^\N/F_0(G)$.
Given CBERs $E \subseteq F$ on $X$,
we say that $F/E$ is \textbf{ergodic}
if there is no Borel partition $X = A_0\sqcup A_1$
with each $A_i$ an $E$-invariant complete $F$-section.

Let $E$ be a CBER on a Polish space $X$,
and let $G \car_B X/E$ be a free action.
Then $E^{\vee G}/E$ is ergodic iff
there is a $G$-equivariant Borel injection
$G^\N/F_0(G) \hra X/E$
induced by a continuous embedding $G^\N \hra X$
(see \cite[Theorem 7.2]{Mil04}).
If $E^{\vee G}$ is hyperfinite,
then there is a $G$-equivariant Borel injection
$X/E \hra G^\N/F_0(G)$
(see \cite[Theorem 8.1]{Mil04}).

Given a pair $E \subseteq F$ of CBERs,
we say that $F/E$ is \textbf{generated by a Borel action}
if there is some Borel action $G \car_B X/E$
such that $F = E^{\vee G}$.
By \cite[Theorem 3]{Pin07},
this is equivalent to the existence
of a sequence of Borel functions
$f_n \colon X/E \to X/E$ such that
$x \F y \iff \exists n \, [f_n([x]_E) = [y]_E]$.
By \cite[Theorem 5]{dRM21},
there is a countable set of obstructions
for being generated by a Borel action.
Namely,
there is a sequence of pairs
$E_n \subseteq F_n$ of CBERs on $2^\N$
where $F_n/E_n$ is not generated by a Borel action,
such that if $E \subseteq F$ are CBERs on $X$
where $F/E$ is not generated by a Borel action,
then there is some $n$ for which
there is a continuous embedding $2^\N \hra X$
which simultaneously reduces $E_n$ to $E$
and $F_n$ to $F$.

\bibliography{liftsOfBorelActions}
\bibliographystyle{alpha}

\bigskip
\bigskip
\noindent Department of Mathematics

\noindent California Institute of Technology

\noindent Pasadena, CA 91125

\medskip
\noindent\textsf{joshfrisch@gmail.com, kechris@caltech.edu, fshinko@caltech.edu }

\end{document}